\theoremstyle{plain}
\newtheorem*{theorem*}{Theorem}
\newtheorem{theorem}{Theorem}[section]
\newtheorem{proposition}[theorem]{Proposition}
\newtheorem{corollary}[theorem]{Corollary}
\newtheorem{lemma}[theorem]{Lemma}
\theoremstyle{definition}
\newtheorem{definition}[theorem]{\scshape{Definition}}
\newtheorem{example}[theorem]{\scshape{Example}}
\newtheorem{remark}[theorem]{\scshape{Remark}}
\DeclareMathOperator{\Aut}{Aut}
\DeclareMathOperator{\Sym}{Sym}
\DeclareMathOperator{\St}{St}
\DeclareMathOperator{\rk}{rk}
\numberwithin{equation}{section}
\author[B. Klopsch]{Benjamin Klopsch} \address{Benjamin Klopsch:
  Heinrich-Heine-Universit\"at, Mathematisch-Naturwissenschaftliche
  Fakult\"at, Mathematisches Institut, D\"usseldorf, Germany}
\email{klopsch@math.uni-duesseldorf.de}
\author[A. Thillaisundaram]{Anitha Thillaisundaram} \address{Anitha
  Thillaisundaram: Centre for Mathematical Sciences, Lund University,
  223 62 Lund, Sweden} \email{anitha.thillaisundaram@math.lu.se}
\date{\today}
\thanks{This research was supported by the Knut and Alice Wallenberg
  Foundation and by the Royal Physiographic Society of Lund. The second author also acknowledges support from the  Folke Lann\'{e}r's Fund.}
\keywords{Groups acting on rooted trees, multi-EGS groups, normal
  subgroups, central width}
\subjclass[2010]{Primary 20E07; Secondary 20D15}
\title[Normal subgroups of non-torsion multi-EGS groups]{Normal
  subgroups of non-torsion multi-EGS groups}
\begin{document}

\begin{abstract}
  We study the distribution of normal subgroups in non-torsion, regular
  branch multi-EGS groups and show that the congruence completions of
  such groups have bounded finite central width.  In particular, we
  show that the profinite completion of the Fabrykowski--Gupta group
  acting on the $p$-adic tree has central width $2$ for every odd
  prime~$p$.  The methods used also apply to the family of \v{S}uni\'{c}
  groups, which closely resemble the Grigorchuk group.
\end{abstract}

\maketitle


\section{Introduction}

Regular branch groups are infinite groups acting on regular rooted
trees that feature a fractal-like subgroup structure.  They first
appeared in the 1980s, for instance in relation to the Burnside
problem, and gave rise to explicit examples of finitely generated
infinite torsion groups, finitely generated groups of intermediate
word growth and finitely generated amenable but not elementary
amenable groups.  Since then, the theory of regular branch groups has
developed extensively and nowadays features applications within group
theory and to other areas, such as to dynamics, analysis and geometry.
Regular branch groups are part of the larger family of branch groups;
however, we will not consider these more general groups here.  The
basic definitions of regular branch groups and related notions are
collected in Section~\ref{sec:prelim}; further information can be
found in~\cite{BarthGrigSunik}.

For a prime $p$, the $p$-adic tree $T$ is the infinite regular rooted
tree where each vertex has $p$ descendants.  The first examples of
regular branch groups were constructed, by Grigorchuk~\cite{Gr80} and
by Gupta and Sidki~\cite{GuSi83}, as subgroups of the automorphism
groups $\Aut T$ for such trees~$T$.  Their pioneering work soon led to
the notion of Grigorchuk--Gupta--Sidki groups, or GGS-groups for
short.  A GGS-group $G = \langle a, b \rangle$ is a $2$-generated
subgroup of $\Aut T$, for an odd prime~$p$, such that the `rooted'
generator $a$ cyclically permutes the $p$ first-level vertices and the
`directed' generator $b$ is recursively defined along an infinite
directed path of the tree; see Section~\ref{sec:GGS} for details.
Several generalisations of GGS-groups have been studied, one of which
is the family of multi-EGS groups, where EGS stands for `extended
Gupta--Sidki'. A multi-EGS group is, simply put, a group generated by
the rooted automorphism $a$ and several directed automorphisms,
each given by a direction and a defining vector, where
some of the directed generators are allowed to be defined
over different directed paths; see Section~\ref{sec:multi-EGS}.

The group $\Aut T$ carries a natural congruence topology, turning it
into a totally disconnected, compact topological group.  A subgroup
$G \le \Aut T$ inherits the congruence topology, which can be
described in a concrete way as follows.  For $n\in\mathbb{N}_0$, the
$n$th level stabiliser $\St_G(n)$, also termed the $n$th principal
congruence subgroup of~$G$, consists of all elements of $G$ that
pointwise fix all $n$th-level vertices.  The cosets of these principal
congruence subgroups form a base for the congruence topology on~$G$.
A (finite-index) subgroup $H \le G$ is a congruence subgroup if it
contains $\St_G(n)$ for some $n \in \mathbb{N}$, i.e.\! if it is open
in the congruence topology.  The group $G\le \Aut T$ has the
congruence subgroup property if every finite-index subgroup is a
congruence subgroup.

While some aspects of regular branch groups have been thoroughly
investigated, we still have many open questions about the distribution
and properties of their normal subgroups.  For specific groups and
special normal subgroups, such as terms of the lower central series or
the derived series, the unfolding picture is very interesting.  For
instance, Vieira~\cite{Vi98} determined the derived series of the
Gupta--Sidki $3$-group and obtained partial results on the lower
central series of that group.  Based on computational data, Bartholdi,
Eick and Hartung~\cite{BaEiHa08} and Hartung~\cite{Ha13} established
partial results concerning the lower central series for several
regular branch groups, and also weakly regular branch
groups. Petschick~\cite{PeXX} recently determined the derived series
of all regular branch GGS-groups.  Regarding general normal subgroups,
Ceccherini-Silberstein, Scarabotti and Tolli attained an effective
version of the congruence subgroup property for the Grigorchuk
group~$\mathfrak{G}$ (the first group constructed by Grigorchuk
in~\cite{Gr80}).  They showed that for non-trivial normal
subgroups $N\trianglelefteq \mathfrak{G}$, if $m$ is maximal such that
$N\subseteq\St_{\mathfrak{G}}(m)$, then
$\St_{\mathfrak{G}}(m+3)\subseteq N$; see \cite[Cor.~5.13]{CeScTo01}.
Based on this result, they explicitly described all normal subgroups
of the Grigorchuk group that are not contained in
$\St_{\mathfrak{G}}(4)$.  Subsequently, Bartholdi~\cite{Ba05}
described an explicit scheme for pinning down all normal subgroups of
the Grigorchuk group and thereby observed that every proper normal
subgroup of the Grigorchuk group can be normally generated by at most
$2$ elements.  He also improved on existing results concerning the
derived series and the lower central series for two well-studied
GGS-groups: the Gupta--Sidki $3$-group and the Fabrykowski--Gupta
group acting on the $3$-adic tree; see \cite[Thm.~3.12 and
Thm.~3.15]{Ba05}.

\smallskip

In this paper, we study normal congruence subgroups and their
distribution in regular branch multi-EGS groups, acting on the
$p$-adic tree~$T$ for $p$ an odd prime, with a focus on non-torsion
groups.  Akin to~\cite{CeScTo01}, we establish an effective version of
the congruence subgroup property.

\begin{theorem}\label{thm:sharper-CSP}
  Let $G \le \Aut T$ be a multi-EGS group and let
  $N \trianglelefteq G$.  Suppose that $[N,G]$, hence also
    $N$, is a congruence subgroup, and let $m\in \mathbb{N}_0$ be maximal subject to
  $N\subseteq \St_G(m)$.  Then the following hold:
  \begin{enumerate}
  \item [\textup{(i)}] if $G$ is regular branch over $[G,G]$ then
    $\St_G(m + \dot{r}_G + 3)\subseteq [N,G]$, where
    $\dot{r}_G$ denotes the maximal number of linearly
      independent defining vectors for the directed automorphisms in a
      standard generating system for~$G$;
  \item [\textup{(ii)}] if $G$ is regular branch over $\gamma_3(G)$
    but not over $[G,G]$, then $\St_G(m+7)\subseteq [N,G]$.
  \end{enumerate}
  
  In the special case where $G$ is a GGS-group, the conclusion in
  \textup{(i)} improves to $\St_G(m+3) \subseteq [N,G]$ and the
  conclusion in \textup{(ii)} to $\St_G(m+4) \subseteq [N,G]$.

  Finally, if $G$ is the Fabrykowski--Gupta group for the prime~$p$,
  then $\St_G(m+2)\subseteq [N,G]$.
\end{theorem}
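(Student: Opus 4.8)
The plan is to re-run the argument that proves part~(i) for the specific group $G=\langle a,b\rangle$, the Fabrykowski--Gupta group with directed generator $b=(a,1,\dots,1,b)$, feeding into it two facts peculiar to this group: that $\St_G(2)\subseteq\gamma_3(G)$, and that $\gamma_3(G)/\gamma_4(G)$ is cyclic. Recall that $G$ is a GGS-group, regular branch over $K:=[G,G]$, that $G/K\cong C_p\times C_p$ with $\St_G(1)/K$ the cyclic subgroup of $G/K$ generated by the image of~$b$, and that $K\supseteq\gamma_3(G)\supseteq\St_G(2)$. The first reduction is formal: by regular branchness $\gamma_3(G)^{(p^m)}\subseteq\psi_m(\St_G(m))$, so $R_m:=\psi_m^{-1}\bigl(\gamma_3(G)\times\overset{p^m}{\cdots}\times\gamma_3(G)\bigr)$ is a well-defined subgroup of $\St_G(m)$, and $\St_G(2)\subseteq\gamma_3(G)$ forces $\St_G(m+2)\subseteq R_m$. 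Hence it suffices to show $R_m\subseteq[N,G]$, and for this in turn it is enough to show, for each level-$m$ vertex~$v$, that $[N,G]$ contains the copy $(\gamma_3(G))_v$ of $\gamma_3(G)$ that acts on the subtree below~$v$ and trivially elsewhere; since $\psi_m$ is injective on $\St_G(m)$, the product of these over all level-$m$ vertices~$v$ is exactly $R_m$.

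To produce these copies I would repeat the key step in the proof of part~(i). Since $[N,G]$ is a non-trivial normal congruence subgroup and $N\not\subseteq\St_G(m+1)$ by maximality of~$m$, there is $g\in N$ fixing the $m$-th level and a level-$m$ vertex~$v$ with $g|_v\notin\St_G(1)$; by level-transitivity and normality of~$N$ such a pair exists above every level-$m$ vertex. Regular branchness over~$K$ puts the copy $K_v\le\operatorname{rist}_G(v)$ at our disposal, and for $k\in K_v$ the commutator $[g,k]\in[N,G]$ is supported below~$v$ with section $[\,g|_v,\,k|_v\,]$ there, $k|_v$ running over~$K$; conjugating inside $[N,G]\trianglelefteq G$ by $\St_G(v)$, whose sections at~$v$ exhaust~$G$, shows that the part of $[N,G]$ supported below~$v$ contains the copy at~$v$ of the normal subgroup $\langle[s,K]\rangle^{G}$ for $s=g|_v$. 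So the whole statement reduces to the group-theoretic assertion
\[
  \langle[s,K]\rangle^{G}\supseteq\gamma_3(G)\qquad\text{for every }s\in G\setminus\St_G(1).
\]

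Verifying this last assertion is the heart of the matter, and the place where the recursion $b=(a,1,\dots,1,b)$ is used directly. Since $s\notin\St_G(1)$ we may write $s\equiv a^{j}b^{l}\pmod{K}$ with $p\nmid j$, and bilinearity of the commutator map $G/\gamma_2\times\gamma_2/\gamma_3\to\gamma_3/\gamma_4$ gives $[s,[a,b]]\equiv\pm j\,[a,b,a]\pm l\,[a,b,b]\pmod{\gamma_4(G)}$. The crucial computation is that $[a,b,b]\in\gamma_4(G)$: applying $\psi$ to $[[a,b],b]$ one finds that it lies in the copy of~$K$ below the first level-one vertex, and a comparison with $\psi(\gamma_4(G))$ confirms that it is in $\gamma_4(G)$. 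Hence $\gamma_3(G)/\gamma_4(G)$ is cyclic, generated by the image of $[a,b,a]$, and $[s,[a,b]]\equiv\pm j\,[a,b,a]\pmod{\gamma_4(G)}$ already generates $\gamma_3(G)/\gamma_4(G)$ because $p\nmid j$; thus $\langle[s,K]\rangle^{G}\gamma_4(G)=\gamma_3(G)$. Applying this equality inside each finite $p$-group quotient $G/\St_G(n)$ and iterating the commutator-with-$G$ step, a routine nilpotent bootstrapping argument promotes it to $\langle[s,K]\rangle^{G}\supseteq\gamma_3(G)$, which together with the reductions above yields $\St_G(m+2)\subseteq R_m\subseteq[N,G]$. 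I expect the main obstacle to be exactly the explicit wreath-recursion bookkeeping behind the two group-specific inputs — confirming $[a,b,b]\in\gamma_4(G)$, equivalently the cyclicity of $\gamma_3(G)/\gamma_4(G)$, and the identity $\St_G(2)\subseteq\gamma_3(G)$; once these are established, the remainder is a direct specialisation of the machinery already set up for part~(i).
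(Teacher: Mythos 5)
Your reduction steps are fine and in fact mirror the paper's own strategy for the Fabrykowski--Gupta case (Proposition~\ref{prop:2-jump}): using $\St_G(2)\subseteq\gamma_3(G)$ it suffices to place a copy of $\gamma_3(G)$ below each level-$m$ vertex inside $[N,G]$, and your commutator construction with $K_v$ correctly yields the copy at $v$ of $\langle [s,K]\rangle^{G}$ for $s=g|_v\notin\St_G(1)$. (Note also that you only treat the Fabrykowski--Gupta refinement, taking (i), (ii) and the GGS case as given.) The fatal problem is the group-specific input you rely on: the claim that $[a,b,b]\in\gamma_4(G)$, equivalently that $\gamma_3(G)/\gamma_4(G)$ is cyclic generated by $[a,b,a]$, is false. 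Indeed $\psi([a,b,b])=([b^{-1}a,a],1,\dots,1)$ with $[b^{-1}a,a]\equiv[a,b]\not\equiv 1$ modulo $\gamma_3(G)$, and one can test membership in $\gamma_4(G)$ in the finite quotient $G/L$, $L=\psi^{-1}(\gamma_3(G)\times\cdots\times\gamma_3(G))$, which embeds into $\bigl(G/\gamma_3(G)\bigr)\wr C_p$ with $G/\gamma_3(G)$ the Heisenberg group of order $p^3$. For $p=3$ a short computation there (reduce first modulo the centre copies $Z^p$ to see $\gamma_4\subseteq Z^p$, then check that every generator $[x,h]$ of $\gamma_4$ has zero coordinate sum in $Z^p$, e.g.\ $[\,[b,a,a],a]=(\zeta,\zeta,\zeta)$ and $[\,[b,a,a],b]=(\zeta^{-1},1,\zeta)$) shows that the image $(\zeta,1,1)$ of $[a,b,b]$ does not lie in $\gamma_4$, and is not even congruent to a power of $[a,b,a]$; so $\gamma_3(G)/\gamma_4(G)\cong C_p\times C_p$.

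This does not just invalidate one computation; it sinks the key assertion itself. Modulo $\gamma_4(G)$ the group $K/\gamma_3(G)$ is cyclic (generated by $[a,b]$) and conjugation acts trivially on the central section $\gamma_3(G)/\gamma_4(G)$, so for \emph{any} single $s$ the image of $\langle [s,K]\rangle^{G}$ in $\gamma_3(G)/\gamma_4(G)$ is cyclic and can never cover a rank-two quotient; hence $\langle [s,K]\rangle^{G}\supseteq\gamma_3(G)$ fails for every $s\in G\smallsetminus\St_G(1)$, and no nilpotent bootstrapping can repair this. This is exactly why the paper's proof of Proposition~\ref{prop:2-jump} cannot get away with commutating one section against $K$: it must manufacture \emph{two} independent elements $([z_1,z_2,z_1],1,\dots,1)$ and $([z_1,z_2,z_2],1,\dots,1)$ of $\psi_m([N,G])$, which it does through the four-case analysis on the sections $c_1,c_2$ of an element of $N$, using the constrained shape of level-$m$ sections from Lemma~\ref{lem:FG-facts}(b) and auxiliary elements $h_1,h_2\in\St_G(m)$ with prescribed $\psi_m$-images. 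To fix your argument you would need to replace the single-section commutator step by a mechanism producing a second, independent generator of $\gamma_3(G)$ modulo $\gamma_4(G)$, which is essentially the content of the paper's case analysis.
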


\noindent  We recall that a standard generating system
for a multi-EGS group is in particular a minimal set of generators;
see Section~\ref{sec:multi-EGS}.  Furthermore, the minimal
  number of generators for the congruence completion $\overline{G}$,
  the topological closure of~$G$ within~$\Aut T$, of a multi-EGS group
  $G$ that is regular branch over $[G,G]$ equals $1+\dot{r}_G$;
  see  Corollary~\ref{prop:r-dot}.

\begin{remark}
  In the situation of Theorem~\ref{thm:sharper-CSP}, when $G$ has the
  congruence subgroup property, such as when $G$ is a GGS-group or
  even the Fabrykowski--Gupta group, then the conclusion applies to
  all non-trivial normal subgroups; compare with
  Proposition~\ref{prop:EGS-CSP},which records
    \cite[Thm.~1.1]{ThUr21} modulo a correction.
\end{remark}

A general, but less effective, version of
Theorem~\ref{thm:sharper-CSP} was already known. Specifically, for any
regular branch group~$G$ with the congruence subgroup property, there
exists a uniform bound $k_G\in\mathbb{N}$ such that
$\St_G(m+k_G)\subseteq [N,G]$, where $N$ and $m$ are as in
Theorem~\ref{thm:sharper-CSP}; see Remark~\ref{rmk:CSP-bound}.

The next step towards understanding the normal congruence subgroups of
a multi-EGS group~$G$ as in Theorem~\ref{thm:sharper-CSP} is to
describe the normal subgroups $N \trianglelefteq G$ that are
sandwiched between two consecutive level stabilisers.  In a somewhat
more general setting we obtain the following result; see
Section~\ref{sec:chain} for a detailed analysis providing more
structural information and references to prior related work.

\begin{theorem} \label{thm:strong-sandwich-gives-chain} Let
  $S \le \Aut T$ be the Sylow pro-$p$ subgroup consisting of all
  elements whose labels are powers of~$a$, the rooted $p$-cycle
  permuting transitively the first-level vertices.  Let
  $G = \langle a \rangle \ltimes \St_G(1) \le S$ be a self-similar
  group containing a directed automorphism $b \in \St_S(1)$ such that
  \[
    \psi(b) = (a^{e_1},\dots,a^{e_{p-1}},b) \quad \text{with}
    \quad \sum\nolimits_{i=1}^{p-1} e_i \not\equiv_p 0,
  \]
  where
  $\psi \colon \St_G(1) \to G \times \overset{p}{\dots} \times G$
  denotes the natural embedding. Then, for every $m \in \mathbb{N}$,
  the normal subgroups $N \trianglelefteq G$ with
  $\St_G(m+1) \subseteq N \subseteq \St_G(m)$ form a chain
  \[
  \St_G(m+1) = N_0 \subsetneq 
  N_1 \subsetneq 
  \cdots \subsetneq
  N_{t(m)} = \St_G(m)  
  \]
  of length $t(m) = \log_p \lvert \St_G(m) : \St_G(m+1) \rvert$ so
  that $\lvert N_j : N_{j-1} \rvert = p$ for $1 \le j \le t(m)$.

  Furthermore, if $G$ is a multi-EGS group, then the normal subgroups
  $N_0, N_1, \dots, N_{t(m)}$ are also characteristic in~$G$, and
  $t(m) \le p t(m+1)$ for every $m$. Additionally if
  $G = \langle a,b \rangle$ is a (non-torsion) GGS-group and regular
  branch over $[G,G]$, then $t(1)=p$ and $t(m) = (p-1) p^{m-1}$ for
  $m \ge 2$.
\end{theorem}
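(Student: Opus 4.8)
The plan is to reformulate the whole statement in the language of modules and then argue by induction on the level. Since $G\le S$, every element of $\St_G(m)$ acts on the $p$ children of a level-$m$ vertex as a power of the $p$-cycle $a$; hence $V_m:=\St_G(m)/\St_G(m+1)$ is elementary abelian of rank $t(m)=\log_p\lvert\St_G(m):\St_G(m+1)\rvert$, and $N\mapsto N/\St_G(m+1)$ is an inclusion-preserving bijection between the normal subgroups $N\trianglelefteq G$ with $\St_G(m+1)\subseteq N\subseteq\St_G(m)$ and the $\mathbb{F}_p[\overline G]$-submodules of $V_m$, where $\overline G=G/\St_G(m)$ is a finite $p$-group acting by conjugation; here $\lvert N_j:N_{j-1}\rvert=p$ becomes ``consecutive submodules of codimension one''. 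Thus the claim that these normal subgroups form a chain of length $t(m)$ with all factors of order $p$ is \emph{equivalent} to the assertion that $V_m$ is a \emph{uniserial} $\mathbb{F}_p[\overline G]$-module, i.e.\ that its submodule lattice is a chain. Since $\overline G$ is a $p$-group (so the only simple module is trivial), uniseriality of $V_m$ is the same as: every radical power $[\,V_m,\overline G,\dots,\overline G\,]$ is a cyclic $\mathbb{F}_p[\overline G]$-module. This is the heart of the theorem.

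To prove this I would induct on $m$. The self-similar embedding $\psi$ restricts to an injective $\overline G$-equivariant map $\bar\psi_m\colon V_m\hookrightarrow V_{m-1}\oplus\overset{p}{\dots}\oplus V_{m-1}$ on which $\St_G(1)/\St_G(m)$ acts coordinatewise (the $j$-th copy through the projection $\operatorname{pr}_j\circ\psi$) while $a$ permutes the $p$ coordinates cyclically. For $m=1$ this is the map $V_1\hookrightarrow\bigoplus_{\lvert v\rvert=1}\mathbb{F}_p=\mathbb{F}_p[x]/(x-1)^p$ (with $x$ corresponding to $\bar a$), and the images of $b,b^a,\dots,b^{a^{p-1}}$ span the ideal generated by $e_1+e_2x+\dots+e_{p-1}x^{p-2}$; the hypothesis $\sum_i e_i\not\equiv_p0$ means this polynomial is a unit in the local ring $\mathbb{F}_p[x]/(x-1)^p$, so $V_1=\mathbb{F}_p[x]/(x-1)^p$ (hence $t(1)=p$), which is uniserial over $\overline G=\langle\bar a\rangle$. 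For the inductive step one transports the radical filtration of $V_m$ through $\bar\psi_m$: a single step of $\overline G$ on $V_m$ corresponds, under $\bar\psi_m$, to one ``diagonal'' step of $a$ together with at most $p$ coordinatewise steps on $V_{m-1}$, so each $[\,V_m,\overline G,\dots,\overline G\,]$ is carried into a submodule of $V_{m-1}^{\oplus p}$ that uniseriality of $V_{m-1}$ forces to be cyclic, whence every radical power of $V_m$ is cyclic. Concretely, one should check that $b$ and its $a$-conjugates generate $V_m$ cyclically and that a single Jordan chain of length $t(m)$ issues from the image of $b$ — the $\bar a$-action supplying the ``shift'' and the section $b$ of $b$ along the ray $(p,p,\dots)$ supplying the ``descent''.

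\emph{The main obstacle} I anticipate is exactly this transport of the radical (equivalently socle) filtration through $\bar\psi_m$, and keeping control when $G$ is strictly larger than $\langle a,b\rangle$: then the coordinate projections $\operatorname{pr}_j\circ\psi$ of $\St_G(1)$ need not exhaust $G$, so the inductive hypothesis about $V_{m-1}$ over $G/\St_G(m-1)$ cannot be fed naively into the $j$-th coordinate. The resolution is to exploit that $G$ is level-transitive (a section $a^{e_i}$ of $b$ with $e_i\not\equiv_p0$ is itself a $p$-cycle) and that it is precisely the $\bar a$-action, which links the $p$ coordinates, that pins the relevant fixed spaces of $\bar\psi_m(V_m)$ down to dimension one irrespective of the size of $G$; for multi-EGS groups this step is cleaner, since such groups are fractal, so the coordinate projections are onto and the induction applies verbatim.

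It remains to address the supplementary assertions. Every automorphism of a multi-EGS group $G$ stabilises each $\St_G(n)$, hence permutes the finite chain $N_0\subsetneq N_1\subsetneq\dots\subsetneq N_{t(m)}$; since the $N_j$ have pairwise distinct orders, this permutation is the identity, so every $N_j$ is characteristic in $G$. The inequality $t(m)\le p\,t(m+1)$ follows by comparing $\psi(\St_G(m+1))$ with a suitable product of copies of $\St_G(m)$ inside $G\times\overset{p}{\dots}\times G$, using the self-replicating structure of multi-EGS groups, which exhibits $V_m$ as a quotient of $V_{m+1}$; this is a short computation. Finally, for a non-torsion GGS-group $G=\langle a,b\rangle$ regular branch over $[G,G]$ one has $\St_G(1)=\langle\langle b\rangle\rangle$, so the unit argument above gives $t(1)=p$, while $t(m)=(p-1)p^{m-1}$ for $m\ge2$ follows from the known level-stabiliser indices $\lvert G:\St_G(m)\rvert=p^{1+p^{m-1}}$ ($m\ge2$) of non-torsion GGS-groups, together with $t(m)=\log_p\lvert\St_G(m):\St_G(m+1)\rvert$.
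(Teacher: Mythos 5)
Your overall strategy (reformulate via conjugation modules, prove uniseriality, recurse along the tree through the $\psi$-twisted $p$-fold sum, then handle the supplementary claims separately) is the same as the paper's, and your base case $m=1$, the characteristic-subgroup argument and the GGS count are fine in substance. But the inductive step contains a genuine gap, and it sits exactly at the heart of the theorem. You claim that each radical power of $V_m$ is carried by $\bar\psi_m$ ``into a submodule of $V_{m-1}^{\oplus p}$ that uniseriality of $V_{m-1}$ forces to be cyclic''. That is a non sequitur: uniseriality of the summand does not force submodules of the twisted sum $V_{m-1}\vert^{\oplus p}_\psi$ to be cyclic --- whether the twisted sum of a uniserial layer is again uniserial is precisely what has to be proved, and it is \emph{false} without the non-torsion hypothesis (for a torsion GGS-group, with $\sum e_i\equiv_p 0$, each layer is built by the same twisted-sum recursion from uniserial pieces, yet the conclusion of the theorem fails). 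Concretely, the dangerous point is the ``diagonal'' step (the case $j_m=1$ in Proposition~\ref{pro:submodules-of-Wm}): for $v\equiv(v_1,\dots,v_1)$ modulo the next term, $a-1$ kills $v$ modulo lower terms, so the descent must come from $b-1$, and one must verify that $v(b-1)$ drops by \emph{exactly one} step, i.e.\ that $\sum_{i=1}^{p-1}v_1(a^{e_i}-1)+v_1(b-1)\not\equiv 0$ as in \eqref{equ:quersumme}. This is where $\sum e_i\not\equiv_p 0$ has to be used again at \emph{every} level, together with an inductively maintained dichotomy recording whether $a-1$ or $b-1$ effects the drop on the previous layer. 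Your proposal invokes the hypothesis only at $m=1$ and never formulates this strengthened inductive statement, so the step as written would not go through.

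A secondary structural point compounds this: you induct on $V_m=\St_G(m)/\St_G(m+1)$, which only \emph{embeds} (in general properly) into $V_{m-1}^{\oplus p}$, so your inductive hypothesis is too weak to control the ambient twisted sum; you also run into the coordinate-projection problem you yourself flag when $G\neq\langle a,b\rangle$. The paper avoids both difficulties by working with the full Sylow layers $W_m=\St_S(m)/\St_S(m+1)\cong W_{m-1}\vert^{\oplus p}_\psi$, where the recursion is exact, reducing the uniseriality proof to the subgroup $\langle a,b\rangle$, and only afterwards determining which submodules $V_{\mathbf j}$ arise from normal subgroups of $G$ (the sets $R_m$ in Proposition~\ref{prop:Rm-EGS}); that second analysis is also what delivers the inequality between $t(m)$ and $t(m+1)$ and the exact GGS values, for which your sketches (first-level projection of $\St_G(m+1)$ onto $\St_G(m)$, known congruence-quotient orders) point in the right direction but again lean on fractality/branching facts that need to be made explicit.
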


In the setting of regular branch GGS-groups,
Theorem~\ref{thm:sharper-CSP} and
Theorem~\ref{thm:strong-sandwich-gives-chain} suggest that, with some
extra work, it is feasible to obtain a complete description of the
distribution of all normal congruence subgroups.  Indeed, we intend to
do so in a future work, for groups that are similar to the
  Fabrykowski--Gupta group.

Next we turn towards some structural properties of normal subgroups of
non-torsion multi-EGS groups.  As a consequence of
Theorems~\ref{thm:sharper-CSP}
and~\ref{thm:strong-sandwich-gives-chain} we obtain bounds for the
numbers of normal generators.  For any group $G$, let
$\rk^{\trianglelefteq}(G)$ denote the \emph{normal rank} of $G$, i.e.
\[
  \rk^{\trianglelefteq}(G) = \sup \{ d_G^{\trianglelefteq}(N) \mid N
  \trianglelefteq G \text{ with } d_G^{\trianglelefteq}(N) < \infty \}
  \in \mathbb{N}_0 \cup \{ \infty \},
\]
where $d_G^{\trianglelefteq}(N)$ denotes the minimal number of normal
generators of $N \trianglelefteq G$.

\begin{corollary}\label{cor:normal-generation}
  Let $G \le \Aut T$ be a non-torsion multi-EGS group with the
  congruence subgroup property, and let $r_G$ denote the number of
  directed automorphisms in a standard generating system for~$G$.
  Then the normal rank of $G$ is bounded as follows:
  \[
    \rk^{\trianglelefteq}(G) \le
  \begin{cases}
    r_G + 3 & \text{if $G$ is regular branch over $[G,G]$,} \\
    7 & \text{if $G$ is regular branch over $\gamma_3(G)$ but not
      over $[G,G]$,} \\
    3 & \text{if $G$ is a GGS-group and regular branch over
      $[G,G]$,} \\
    4 & \text{if $G$ is a GGS-group and regular branch over
      $\gamma_3(G)$ but not over $[G,G]$.}
  \end{cases}
  \]
  If $G$ is the Fabrykowski--Gupta group for the prime $p$, then
  $\rk^{\trianglelefteq}(G) = 2$.
\end{corollary}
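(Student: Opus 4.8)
The plan is to deduce everything from Theorems~\ref{thm:sharper-CSP} and~\ref{thm:strong-sandwich-gives-chain} together with the congruence subgroup property (CSP). First I would fix a non-trivial normal subgroup $N \trianglelefteq G$; since $G$ has the CSP, $N$ is a congruence subgroup, so there is a maximal $m \in \mathbb{N}_0$ with $N \subseteq \St_G(m)$, and $[N,G]$ is also a congruence subgroup. Theorem~\ref{thm:sharper-CSP} then gives $\St_G(m+k) \subseteq [N,G] \subseteq N$ for the appropriate value $k = k_G$ depending on which branch case we are in (namely $k = \dot r_G + 3$, $7$, $3$, $4$, or $2$ respectively). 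Since $\dot r_G \le r_G$, the first bound is covered by $r_G + 3$. The key structural point is then that $N / [N,G]$ lives between $\St_G(m+k)$ and $\St_G(m)$, and this quotient of a level-stabiliser section is generated, as a normal subgroup of $G/[N,G]$, by few elements; feeding this back yields a bound on $d_G^{\trianglelefteq}(N)$.

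More precisely, I would argue as follows. Set $\overline{G} = G/[N,G]$ and let $\overline{N}$ be the image of $N$, so $\overline{N}$ is central in $\overline{G}$ and hence $d^{\trianglelefteq}_G(N) = d^{\trianglelefteq}_{\overline G}(\overline N) = d(\overline N)$, the ordinary minimal number of generators of the abelian group $\overline N$. Now $\overline N$ sits inside the finite $p$-group $\St_G(m)/[N,G]$, which has a normal (indeed characteristic) series with successive quotients among the finite chains of Theorem~\ref{thm:strong-sandwich-gives-chain}. Running through the $k$ levels from $\St_G(m)$ down to $\St_G(m+k)$ and using that each consecutive pair of level stabilisers refines into a chain of normal subgroups with elementary abelian quotients, one sees that $\St_G(m)/\St_G(m+k)$ — and a fortiori any $G$-normal subgroup of $\St_G(m)/[N,G]$ containing the image of $\St_G(m+k)$ — can be normally generated by at most $k$ elements: one picks up at most one generator per level of the chain because, modulo the next term, each quotient is generated by a single $G$-conjugacy class. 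Thus $d^{\trianglelefteq}_G(N) \le k_G$, and taking the supremum over all $N$ gives $\rk^{\trianglelefteq}(G) \le k_G$, which is exactly the claimed case-by-case bound.

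For the final assertion on the Fabrykowski--Gupta group $G$, the upper bound $\rk^{\trianglelefteq}(G) \le 2$ is the special case $k_G = 2$ of the above (using the sharpened conclusion $\St_G(m+2) \subseteq [N,G]$ recorded in Theorem~\ref{thm:sharper-CSP}). For the matching lower bound $\rk^{\trianglelefteq}(G) \ge 2$, I would exhibit a single normal subgroup $N \trianglelefteq G$ that is not cyclic as a normal subgroup — for instance, taking $m=1$ in Theorem~\ref{thm:strong-sandwich-gives-chain} and using that $t(1) = 2$ there (for $p=3$; in general $\lvert \St_G(1):\St_G(2)\rvert$ is large but the relevant small-index section still exhibits a $G$-normal subgroup needing two generators), one gets a $G$-normal subgroup with two-generated but not cyclic normal-generation, forcing $d^{\trianglelefteq}_G(N) = 2$. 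The cleanest choice is $N = \St_G(1)$ itself, or the unique $G$-normal subgroup strictly between $\St_G(2)$ and $\St_G(1)$ supplied by the chain; a direct check that it cannot be generated by a single $G$-conjugacy class then completes the argument.

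The main obstacle I anticipate is the middle step: justifying that moving down one step in the chain of Theorem~\ref{thm:strong-sandwich-gives-chain} costs at most one normal generator, uniformly and without circularity. One has to be careful that the chain there is between \emph{consecutive} level stabilisers $\St_G(m+1) \subseteq N \subseteq \St_G(m)$, whereas here we must traverse $k_G$ such steps and also handle $G$-normal subgroups that are not themselves members of the chain but merely trapped between $\St_G(m+k_G)$ and $\St_G(m)$; making the "one generator per layer" bookkeeping rigorous — probably via a Nakayama/Frattini-type argument in the finite $p$-group $\St_G(m)/[N,G]$, noting that the Frattini quotient is controlled by the number of chain layers — is where the real work lies. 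Everything else is a formal consequence of the two main theorems.
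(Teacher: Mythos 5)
Your overall strategy coincides with the paper's: use Theorem~\ref{thm:sharper-CSP} to get $\St_G(m+d)\subseteq[N,G]$, reduce $d_G^{\trianglelefteq}(N)$ to generation of $N/[N,G]$, and then count one normal generator per level-stabiliser layer. However, the step that carries all the weight is not correctly closed. Your ``a fortiori'' passage --- from ``$\St_G(m)/\St_G(m+k)$ is normally generated by $k$ elements'' to ``any $G$-normal subgroup of $\St_G(m)/[N,G]$ containing the image of $\St_G(m+k)$ is normally generated by at most $k$ elements'' --- is false in general: normal generation numbers are not monotone under passing to $G$-invariant subgroups (already for $G=C_p\times C_p$ acting on the $3$-dimensional module $\mathbb{F}_p[x,y]/(x^2,xy,y^2)$, the module is cyclic but its radical needs two generators), and your proposed repair via ``the Frattini quotient of $\St_G(m)/[N,G]$ is controlled by the number of chain layers'' is also off: that Frattini quotient has rank governed by $t(m)$, not by $d$. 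What is actually needed, and what the paper uses, is the \emph{uniseriality} you already have at your disposal: since $G$ is non-torsion, Proposition~\ref{pro:submodules-of-Wm} (equivalently the chain statement of Theorem~\ref{thm:strong-sandwich-gives-chain}) says that \emph{every} $G$-invariant subgroup of each layer $\St_S(n)/\St_S(n+1)$ is a cyclic $\mathbb{F}_pG$-module. Applying this to the sections $\bigl(\St_N(m+k-1)\St_G(m+k)\bigr)/\bigl(\St_{[N,G]}(m+k-1)\St_G(m+k)\bigr)$, $1\le k\le d$, of $N$ itself gives one normal generator per layer and hence $d_G^{\trianglelefteq}(N)\le d$ directly, with no monotonicity claim. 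So the gap is repairable with the tools you cite, but as written the key inequality is unproved.

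The final assertion for the Fabrykowski--Gupta group is also defective as proposed. Theorem~\ref{thm:strong-sandwich-gives-chain} gives $t(1)=p$ (not $2$, even for $p=3$), and your ``cleanest choice'' $N=\St_G(1)$ fails outright: $\St_G(1)=\langle b\rangle^G$ is normally generated by a single element, so it does not witness $\rk^{\trianglelefteq}(G)\ge 2$, and it is not clear that any of the intermediate terms of the chain between $\St_G(2)$ and $\St_G(1)$ does either. The paper's witness is simply $N=G$: since $G/[G,G]\cong C_p\times C_p$, the group $G$ cannot be the normal closure of one element, and $G\trianglelefteq G$ is admissible in the definition of $\rk^{\trianglelefteq}$, so $\rk^{\trianglelefteq}(G)\ge d_G^{\trianglelefteq}(G)=2$.
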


\noindent We remark that, if $G$ is a multi-EGS group that
  is regular branch over $[G,G]$ and has the congruence subgroup
  property, then $\dot r_G$ equals $r_G$; see Proposition~\ref{prop:EGS-CSP}.

\smallskip

Finally, we apply our results to bound the central width of a
non-torsion, regular branch
multi-EGS group~$G$, more precisely of its congruence
completion~$\overline{G}$.  We observe that $\overline{G}$ is a
finitely generated just infinite pro-$p$ group; see
Section~\ref{sec:GGS}.  The \emph{width} of such a pro-$p$ group
$\Gamma$ is defined as
\[
  w(\Gamma) = \sup \{ \log_p \lvert
  \gamma_n(\Gamma):\gamma_{n+1}(\Gamma) \rvert \mid n \in \mathbb{N}
  \} \in \mathbb{N}_0 \cup \{ \infty \};
\]
its use is to generalise the concept of finite coclass.  Linear pro-$p$ groups of finite width were studied in~\cite{KlLePl97}, and special interest has been shown in finding other explicit examples of just infinite pro-$p$ groups of finite width; for instance, see~\cite{Gr05} and the references therein.  We consider the \emph{central  width} of a pro-$p$ group $\Gamma$, defined as
\[
  w_\mathrm{cen}(\Gamma) = \sup \{ \log_p \lvert \Delta :
  [\Delta,\Gamma] \rvert \mid \Delta \trianglelefteq_\mathrm{o} \Gamma
  \} = \sup \{ \log_p \lvert \Delta :
  [\Delta,\Gamma] \rvert \mid \Delta \trianglelefteq_\mathrm{c} \Gamma
  \} \in \mathbb{N}_0 \cup \{ \infty \};
\]
compare with~\cite[I~b)]{KlLePl97} and \cite{CaCa01}.  Clearly,
$w(\Gamma) \le w_\mathrm{cen}(\Gamma)$ so that upper bounds for
$w_\mathrm{cen}(\Gamma)$ also yield corresponding bounds for~$w(\Gamma)$.

Bartholdi and Grigorchuk computed the lower central series of the
Grigorchuk group and the Grigorchuk overgroup, and their results show
that the completion of the group has finite width $3$ and $4$
respectively; see \cite[Thm.~6.4 and Thm.~7.4]{BaGr00}.  By a detailed
study of the graded Lie algebra associated to the lower central
series, Bartholdi showed that the completion of the Gupta--Sidki
$3$-group has infinite width and that the completion of the
Fabrykowski--Gupta group acting on the $3$-adic tree has width~$2$;
see~\cite[Cor.~3.9 and Cor.~3.14]{Ba05} and \cite[Thm.~16]{BaEiHa08}.
With considerable less effort we obtain the following general bounds.

\begin{corollary}\label{cor:central-width}
  Let $G \le \Aut T$ be a non-torsion multi-EGS group, and let
  $\dot{r}_G$ denote the maximal number of linearly independent
    defining vectors for the directed automorphisms
    in a standard generating system for~$G$.  Then the central width of
  the congruence completion $\overline{G}$ is bounded as
  follows:
  \[
    w_\mathrm{cen}(\overline{G}) \le
    \begin{cases}
     \dot{r}_G + 3 & \!\text{if $G$ is regular branch over $[G,G]$,} \\
      7 & \!\text{if $G$ is regular branch over $\gamma_3(G)$ but not
        over $[G,G]$,} \\
      3 & \!\text{if
        $G$ is a GGS-group and regular branch over $[G,G]$,} \\
      4 & \!\text{if $G$ is a GGS-group and regular branch over
        $\gamma_3(G)$ but not over $[G,G]$.}
    \end{cases}
  \]
  If $G$ is the Fabrykowski--Gupta group for the prime $p$, then
  $w_\mathrm{cen}(\overline{G}) = 2$.
\end{corollary}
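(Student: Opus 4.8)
The plan is to derive Corollary~\ref{cor:central-width} from Theorems~\ref{thm:sharper-CSP} and~\ref{thm:strong-sandwich-gives-chain}, working throughout inside the congruence completion $\overline{G} \le \Aut T$, which is a finitely generated just infinite pro-$p$ group. In such a group every nontrivial closed normal subgroup is open, and every open subgroup contains some principal congruence subgroup $\St_{\overline{G}}(n)$; in particular the hypotheses of Theorem~\ref{thm:sharper-CSP} are automatically fulfilled for subgroups of~$\overline{G}$. Thus, given $1 \ne \Delta \trianglelefteq_\mathrm{c} \overline{G}$ and taking $m \in \mathbb{N}_0$ maximal with $\Delta \subseteq \St_{\overline{G}}(m)$, Theorem~\ref{thm:sharper-CSP} yields
\[
  \St_{\overline{G}}(m+c) \;\subseteq\; [\Delta,\overline{G}] \;\subseteq\; \Delta \;\subseteq\; \St_{\overline{G}}(m),
\]
where $c$ is the constant attached to the relevant case ($\dot r_G + 3$, $7$, $3$, $4$, or, for the Fabrykowski--Gupta group, $2$). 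Hence $[\Delta,\overline{G}]$ is open, $\lvert \Delta : [\Delta,\overline{G}] \rvert$ is finite, and the corollary reduces to the assertion that $\log_p \lvert \Delta : [\Delta,\overline{G}] \rvert \le c$ for every such~$\Delta$, which then gives $w_\mathrm{cen}(\overline{G}) \le c$.

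The remaining task runs parallel to the proof of Corollary~\ref{cor:normal-generation}. Since $\log_p \lvert \Delta : [\Delta,\overline{G}] \rvert = \log_p\lvert \Delta : \Delta^p[\Delta,\overline{G}]\rvert + \log_p \lvert \Delta^p[\Delta,\overline{G}] : [\Delta,\overline{G}] \rvert = d_{\overline{G}}^{\trianglelefteq}(\Delta) + \log_p \lvert \Delta^p[\Delta,\overline{G}] : [\Delta,\overline{G}] \rvert$, the bound splits into a ``number of normal generators'' part and a ``$p$-th power defect''. The first part is at most $\rk^{\trianglelefteq}(\overline{G})$, which is bounded by $c$ exactly as in Corollary~\ref{cor:normal-generation}: although a priori $\Delta/[\Delta,\overline{G}]$ is only visibly a quotient of $\St_{\overline{G}}(m)/\St_{\overline{G}}(m+c)$ — whose order grows roughly like $p^{p^m}$ — Theorem~\ref{thm:strong-sandwich-gives-chain} forces the normal (indeed characteristic) subgroups sandwiched between consecutive level stabilisers to be linearly ordered, and, combined with the regular branch structure (namely that $\psi$ maps the branching subgroup $K = \overline{[G,G]}$, resp.\ $\overline{\gamma_3(G)}$, over its $p$-fold direct power), this lets one reduce $\Delta$ to its coordinate projection one level up and induct on $m$, so that the bound stays independent of~$m$. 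The same inductive analysis handles the second part, showing that each of the elementary abelian sections between consecutive level stabilisers is entered by $\Delta$ without any contribution from $p$-th powers, so the defect is absorbed; hence $\log_p \lvert \Delta : [\Delta,\overline{G}] \rvert \le c$, which gives the first four bounds.

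For the last assertion, let $G$ be the Fabrykowski--Gupta group for the prime~$p$, so that $c=2$ in Theorem~\ref{thm:sharper-CSP} and thus $w_\mathrm{cen}(\overline{G}) \le 2$ by the above. For the reverse inequality apply the definition of $w_\mathrm{cen}$ to $\Delta = \overline{G}$: here $[\Delta,\overline{G}] = \overline{G}'$ and $\Delta/[\Delta,\overline{G}] \cong G^{\mathrm{ab}} \cong C_p \times C_p$, so $\log_p \lvert \overline{G} : \overline{G}' \rvert = 2$ and therefore $w_\mathrm{cen}(\overline{G}) \ge 2$. Combining the two inequalities yields $w_\mathrm{cen}(\overline{G}) = 2$; the same choice $\Delta = \overline{G}$ also shows that none of the other bounds can be lowered below $1 + \dot r_G = \log_p\lvert\overline{G}:\overline{G}'\rvert$ in general, although only the upper bounds are asserted.

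I expect the main obstacle to be the depth-independence of the bound in the second paragraph, and more precisely two points. First, $[\Delta,\overline{G}]$ need not be contained in $\St_{\overline{G}}(m+1)$ — indeed it can happen, already for $m=1$, that $[\St_{\overline{G}}(m),\overline{G}] \not\subseteq \St_{\overline{G}}(m+1)$ — so the obvious filtration of $\Delta$ by the subgroups $\Delta \cap \St_{\overline{G}}(m+i)$ does not reduce the problem to single-level pieces and Theorem~\ref{thm:strong-sandwich-gives-chain} cannot be quoted for it verbatim; one must instead pass to the finite quotients $\overline{G}/\St_{\overline{G}}(m+i+1)$, identify the relevant chain members there, and carefully track the interplay between $\Delta$, its level pieces and $[\Delta,\overline{G}]$. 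Second, the naive inductive step $\Delta \mapsto \Delta \cap \St_{\overline{G}}(1)$ loses a factor $p$ whenever $\Delta$ surjects onto $\overline{G}/\St_{\overline{G}}(1) \cong C_p$; to prevent this from accumulating over the levels one must, at the inductive step, absorb the ``top $C_p$'' of each level stabiliser into $[\Delta,\overline{G}]$, and it is here that the coordinate-projection description forced by the regular branch structure has to be used together with the precise value of the constant~$c$ from Theorem~\ref{thm:sharper-CSP}, with the case $m=0$ (controlled by $G^{\mathrm{ab}}$ and the small index of $\Delta$) serving as the base of the induction.
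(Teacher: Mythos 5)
Your reduction (pass to normal congruence subgroups, apply Theorem~\ref{thm:sharper-CSP} to get $\St_G(m+c)\subseteq[N,G]$, and obtain the lower bound for the Fabrykowski--Gupta group from $\lvert G:\gamma_2(G)\rvert=p^2$) is the same frame as the paper's, but the heart of the argument --- showing $\log_p\lvert \Delta : [\Delta,\overline{G}]\rvert \le c$ rather than merely $d^{\trianglelefteq}_{\overline{G}}(\Delta)\le c$ --- is left with a genuine gap. Your splitting of the index into a normal-rank part and a ``$p$-th power defect'', followed by an induction on the level $m$ via coordinate projections, is never carried out: the claim that ``the defect is absorbed'' is asserted, not proved, and you yourself list the two obstacles (that $[\Delta,\overline{G}]$ need not lie in $\St_{\overline{G}}(m+1)$, and that the naive one-level reduction loses a factor $p$ each step) without resolving them. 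So as it stands the argument only yields the normal generation bound of Corollary~\ref{cor:normal-generation}, not the central width bound.

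The missing idea, which makes the depth-independence issue disappear, is to use the uniseriality statement (Proposition~\ref{pro:submodules-of-Wm}, available because a non-torsion multi-EGS group contains a directed generator $b$ with $\sum_i e_i\not\equiv_p 0$) directly on the $d=c$ sections between consecutive level stabilisers: filtering $N$ modulo $[N,G]$ by the subgroups $\St_N(m+k-1)\St_G(m+k)$, $k=1,\dots,d$, each factor
\[
  \bigl( \St_N(m+k-1)\St_G(m+k)\bigr) \big/ \bigl( \St_{[N,G]}(m+k-1)\St_G(m+k)\bigr)
\]
is the quotient of a non-trivial $\mathbb{F}_pG$-submodule $V$ of the uniserial module $\St_G(m+k-1)/\St_G(m+k)$ by a subgroup containing $[V,G]$, hence has order at most $p$; since $\St_N(m+d)\subseteq\St_G(m+d)\subseteq[N,G]$, these indices multiply to give $\log_p\lvert N:[N,G]\rvert\le d$ in one stroke, with no induction on $m$, no separate treatment of $p$-th powers, and no need for $[N,G]\subseteq\St_G(m+1)$. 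This is exactly how the paper argues, and it is both simpler and complete where your sketch is not.
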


The last assertion settles a conjecture of Bartholdi, Eick and
Hartung~\cite[Conj.~17]{BaEiHa08}.  The conjecture was independently
proved by Fern\'{a}ndez-Alcober, Garciarena and Noce~\cite{Mikel}, who
give a detailed description of the lower central series of the
Fabrykowski--Gupta group, and more generally, of GGS-groups of
FG-type.  Computational evidence also indicates that the bound $3$
above is best possible for GGS-groups~$G$ that are regular branch over
$[G,G]$, but not of FG-type; see \cite{Mikel} for the definition of
groups of FG-type. This also reflects on the sharpness of
the corresponding bounds in Theorem~\ref{thm:sharper-CSP} and
Corollary~\ref{cor:normal-generation}.

\smallskip

Finally, our methods also apply to the family of \v{S}uni\'{c} groups,
which closely resemble the Grigorchuk group. To not disrupt the flow
of the paper, we refer the reader to Appendix~\ref{sec:Sunic} for
details of these groups, relevant notation, and for the proofs of all
corresponding results.

\begin{theorem}\label{thm:Sunic}
  Let $G\le \Aut T$ be a regular branch \v{S}uni\'{c} group acting on
  the $p$-adic tree~$T$, where $p$ is any prime, and let $r_G$ denote
  the number of directed automorphisms in a standard generating system
  for~$G$. For $p=2$, let $n_G$ be the parameter as defined in
  Proposition~\ref{prop:Sunic-CSP}.  Then the normal rank of $G$ and
  the central width of its congruence completion~$\overline{G}$ are
  bounded as follows:
  \[
   \rk^{\trianglelefteq}(G) ,  w_\mathrm{cen}(\overline{G}) \le
   \begin{cases}
     r_G+3 & \text{if $p$ is odd,} \\
     r_G+n_G+3 & \text{if $p=2$.}
    \end{cases}
  \]
\end{theorem}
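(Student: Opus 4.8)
The plan is to run the same machinery developed in the main body for multi-EGS groups, but now over the Šunić family, whose defining data mirror that of the Grigorchuk group. First I would recall from Appendix~\ref{sec:Sunic} the structure of a regular branch Šunić group $G = \langle a \rangle \ltimes \St_G(1)$, the description of its standard generating system with $r_G$ directed automorphisms, the branching subgroup (which is $[G,G]$ when $p$ is odd, and the appropriate term when $p=2$, governed by the parameter $n_G$ of Proposition~\ref{prop:Sunic-CSP}), and the congruence subgroup property established there. The key point is that, just as in the multi-EGS case, the abelianisation $G/[G,G]$ is elementary abelian of rank $1+r_G$ (for odd $p$), so that the congruence completion $\overline{G}$ is a finitely generated just infinite pro-$p$ group whose relevant quotients are controlled by the same linear-algebra over $\mathbb{F}_p$ that drives Theorem~\ref{thm:sharper-CSP}.

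Next I would establish the Šunić analogue of Theorem~\ref{thm:sharper-CSP}: for a non-trivial $N \trianglelefteq G$ with $m$ maximal subject to $N \subseteq \St_G(m)$, one has $\St_G(m + k_G) \subseteq [N,G]$ where $k_G = r_G + 3$ for odd $p$ and $k_G = r_G + n_G + 3$ for $p=2$. The argument is the one used for multi-EGS groups: using that $G$ is regular branch over the relevant subgroup $K$ with $[G,G] \supseteq \St_G(\ell)$ for a small $\ell$ (here $\ell = n_G$ absorbs the extra length needed when $p=2$, and $\ell=1$ when $p$ is odd, cf.\ the Grigorchuk case), one shows $\St_G(m+1) \subseteq [N,G]\,\St_G(m+1)$ and then climbs down the tree level by level, each descent costing the bound on $|\St_G(j):\St_G(j+1)[G,G] \cap \St_G(j)|$, which is exactly captured by the rank-of-defining-vectors count. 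The congruence subgroup property from Proposition~\ref{prop:Sunic-CSP} lets us apply this to all non-trivial $N$, not merely to those with $[N,G]$ a congruence subgroup.

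With the effective congruence bound in hand, the remaining two deductions are formally identical to the derivations of Corollary~\ref{cor:normal-generation} and Corollary~\ref{cor:central-width} from Theorem~\ref{thm:sharper-CSP}. For the normal rank: if $N \trianglelefteq G$ then $\St_G(m+k_G) \subseteq [N,G] \subseteq N$, and $N$ is generated, modulo $\St_G(m+k_G)$, together with $G$-conjugates, by representatives that fill up the at most $k_G$-step gap between $\St_G(m)$ and $\St_G(m+k_G)$; combining with the chain structure of Theorem~\ref{thm:strong-sandwich-gives-chain} (whose hypotheses on the directed generator $b$ are again met by a Šunić directed automorphism) one gets $d_G^{\trianglelefteq}(N) \le k_G$. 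For the central width: for any open normal $\Delta \trianglelefteq_{\mathrm{o}} \overline{G}$, writing $m$ for the level at which $\Delta$ first sits inside $\St_{\overline G}(m)$ but has $[\Delta, \overline G]$ strictly deeper, the bound $\St_{\overline G}(m+k_G) \subseteq [\Delta,\overline G]$ forces $\log_p|\Delta : [\Delta,\overline G]| \le k_G$, so $w_{\mathrm{cen}}(\overline G) \le k_G$.

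The main obstacle I anticipate is not conceptual but bookkeeping: getting the correct value of the additive constant in the $p=2$ case, i.e.\ verifying that $n_G$ (and not some larger shift) is the precise extra length needed to guarantee $\St_G(m + r_G + n_G + 3) \subseteq [N,G]$, since for $p=2$ the group is typically regular branch over a proper subgroup of $[G,G]$ and one must track how deep $[G,G]$ sits relative to the branching subgroup. This requires a careful analysis of the lower central quotients of a Šunić group near the top of the tree, paralleling the Grigorchuk-group computation of Bartholdi--Grigorchuk but keeping the parameters general; the rest of the proof then goes through verbatim by the arguments above, which is why these results are deferred to Appendix~\ref{sec:Sunic}.
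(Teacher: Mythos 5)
Your overall strategy (an effective congruence bound followed by a per-level contribution of at most $p$) is the paper's strategy, and you name the correct constants. For odd $p$ the paper obtains $\St_G(m+r_G+3)\subseteq[N,G]$ as in Proposition~\ref{prop:Sunic-strong-CSP}, by the same commutator argument as for GGS-groups together with the quoted containment $\St_G(r_G+3)\subseteq G''$ from Proposition~\ref{prop:Sunic-CSP}; for $p=2$ it gets $\St_G(m+r_G+n_G+3)\subseteq[N,G]$ immediately from Remark~\ref{rmk:CSP-bound} combined with $\St_G(r_G+n_G+2)\subseteq K'$, again quoted from Francoeur--Garrido. So no new Bartholdi--Grigorchuk-style computation of lower central quotients is needed, contrary to what you anticipate; also, your intermediate step ``$\St_G(m+1)\subseteq[N,G]\,\St_G(m+1)$'' is vacuous as written, and the constants do not arise from a rank-of-defining-vectors count, but these points are repairable.

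The genuine gap is your appeal to Theorem~\ref{thm:strong-sandwich-gives-chain} (equivalently Proposition~\ref{pro:submodules-of-Wm}) with the claim that its hypothesis ``is again met by a \v{S}uni\'{c} directed automorphism''. That hypothesis requires a directed $b$ with $\psi(b)=(a^{e_1},\dots,a^{e_{p-1}},b)$ and $\sum_{i=1}^{p-1}e_i\not\equiv_p 0$. No \v{S}uni\'{c} generator has this form, since $\psi(b_i)=(1,\dots,1,b_{i+1})$ and $\psi(b_r)=(a,1,\dots,1,b_1^{-\alpha_0}\cdots b_r^{-\alpha_{r-1}})$, so the final section is not the element itself; worse, the condition $\sum e_i\not\equiv_p 0$ would produce a non-torsion GGS-subgroup $\langle a,b\rangle$, so it can never be satisfied inside a torsion \v{S}uni\'{c} group, while Theorem~\ref{thm:Sunic} explicitly includes torsion groups (e.g.\ the Grigorchuk-type examples with $p=2$). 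Without the uniseriality of the $\mathbb{F}_pG$-modules $\St_S(m)/\St_S(m+1)$ you cannot pass from $\St_G(m+k_G)\subseteq[N,G]$ to $\log_p\lvert N:[N,G]\rvert\le k_G$ or $d_G^{\trianglelefteq}(N)\le k_G$, because $\log_p\lvert\St_G(m):\St_G(m+k_G)\rvert$ grows exponentially in $m$. This is precisely why the appendix proves Lemma~\ref{lem:general-action-of-b-Sunic} and its consequence (the \v{S}uni\'{c} analogue of Proposition~\ref{pro:submodules-of-Wm}): in the non-torsion case a suitable element of $\langle b_1,\dots,b_r\rangle$ with non-trivial sections along the spine is provided by \cite{Su07}, and in the torsion case one must analyse the sections $\varphi_{p\cdots p1}(b_r)$ along the rightmost path and substitute an appropriate generator $b_j$ where they vanish. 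That lemma is the missing ingredient in your proposal; once it is in place, your final two deductions do run exactly as in the proofs of Corollaries~\ref{cor:normal-generation} and~\ref{cor:central-width}.
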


Unlike our previous results for multi-EGS groups, the results for the
\v{S}uni\'{c} groups include torsion groups.

 \bigskip
 
 We conclude with the observation that we obtain
 infinitely many profinite isomorphism classes of groups with finite
 central width.
 
 \begin{corollary}\label{cor:EGS-profinitely-non-isomorphic}
   For each prime $p\ge 3$, there are at least two non-torsion
   multi-EGS groups $G$ and $H$, with non-isomorphic profinite
   completions, each of finite central width.
 \end{corollary}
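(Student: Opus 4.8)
The plan is to pit the Fabrykowski--Gupta group against a multi-EGS group carrying more directed generators, and to distinguish their profinite completions by the minimal number of topological generators. Fix a prime $p \ge 3$ and let $G$ be the Fabrykowski--Gupta group for~$p$. It is a non-torsion GGS-group, hence a non-torsion multi-EGS group, and by Corollary~\ref{cor:central-width} its congruence completion $\overline{G}$ has central width~$2$, in particular finite. Since $G = \langle a, b \rangle$ is generated by two elements, its profinite completion $\widehat{G}$ is topologically generated by two elements, so $d(\widehat{G}) \le 2$, where $d(-)$ denotes the minimal number of (topological) generators.

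Next I would exhibit a second witness $H$. Take $H = \langle a, b_1, b_2 \rangle \le \Aut T$, where $b_1$ is the directed automorphism with $\psi(b_1) = (a,1,\dots,1,b_1)$ and $b_2$ is a directed automorphism, along the same or another prescribed path, with defining vector $(0,1,0,\dots,0)$; the two defining vectors are linearly independent, so $\dot{r}_H = 2$, and $\langle a, b_1 \rangle$ is (a copy of) the Fabrykowski--Gupta group, so $H$ is non-torsion. From the description of multi-EGS groups in Section~\ref{sec:multi-EGS} one checks that $H$ is regular branch over $[H,H]$. Then Corollary~\ref{cor:central-width} yields $w_\mathrm{cen}(\overline{H}) \le \dot{r}_H + 3 < \infty$, while Corollary~\ref{prop:r-dot} shows that $\overline{H}$ is minimally topologically generated by $1 + \dot{r}_H = 3$ elements. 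As $\widehat{H}$ maps continuously onto $\overline{H}$, the profinite completion $\widehat{H}$ cannot be topologically generated by fewer than three elements, i.e. $d(\widehat{H}) \ge 3$.

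It follows that $d(\widehat{G}) \le 2 < 3 \le d(\widehat{H})$, so $\widehat{G} \not\cong \widehat{H}$; hence $G$ and $H$ are two non-torsion multi-EGS groups with non-isomorphic profinite completions, each of finite central width, as required. Note that no appeal to the congruence subgroup property is needed: the argument uses only the trivial bound $d(\widehat{G}) \le d(G) = 2$ together with the surjection $\widehat{H} \twoheadrightarrow \overline{H}$. The one step that genuinely requires attention — essentially the only nontrivial one — is verifying that the concrete group $H$ above really is a non-torsion multi-EGS group that is regular branch over $[H,H]$, so that Corollaries~\ref{cor:central-width} and~\ref{prop:r-dot} apply; this is routine given the structure theory assembled earlier, but some care is warranted when $p=3$, where the supply of admissible defining vectors is smallest and one must confirm that a suitable $H$ still exists.
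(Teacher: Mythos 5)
Your construction and your argument that $\widehat{G} \not\cong \widehat{H}$ are correct, and they take a mildly different route from the paper: the paper takes an arbitrary pair of multi-EGS groups with the congruence subgroup property, regular branch over their derived subgroups and with $r_G<r_H$, and separates the completions by the ranks of their abelianisations, whereas you fix two explicit witnesses and separate them by minimal topological generator numbers, playing $d(\widehat{G})\le d(G)=2$ against $d(\widehat{H})\ge d(\overline{H})=1+\dot{r}_H=3$, the latter via Corollary~\ref{prop:r-dot} and the continuous surjection $\widehat{H}\twoheadrightarrow\overline{H}$. That part is sound, and your worry about $p=3$ is unfounded: the vectors $(1,0,\dots,0)$ and $(0,1,0,\dots,0)$ are admissible for every $p\ge 3$, they are linearly independent, and $r_H=2\le p-1$, so Proposition~\ref{prop:EGS-facts}(i) does give that $H$ is regular branch over $[H,H]$.

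The gap lies in the clause ``each of finite central width''. In the statement this refers to the profinite completions --- which is precisely why the paper's proof insists on the congruence subgroup property --- whereas Corollary~\ref{cor:central-width} only bounds the central width of the congruence completions $\overline{G}$ and $\overline{H}$. Your closing claim that ``no appeal to the congruence subgroup property is needed'' is therefore incorrect for this part of the assertion: without knowing $\widehat{H}\cong\overline{H}$ (and $\widehat{G}\cong\overline{G}$), finiteness of $w_\mathrm{cen}(\overline{H})$ says nothing about $w_\mathrm{cen}(\widehat{H})$. The repair is immediate for your chosen groups: both the Fabrykowski--Gupta group and your $H$ (whether the second directed generator sits on the same path or on a different one) are regular branch over their derived subgroups, their defining vectors are linearly independent, and neither lies in the exceptional class $\mathcal{E}$, since $(1,0,\dots,0)$ is non-symmetric; hence Proposition~\ref{prop:EGS-CSP}(i) gives the congruence subgroup property, so $\widehat{G}\cong\overline{G}$ and $\widehat{H}\cong\overline{H}$, and the central width bounds of Corollary~\ref{cor:central-width} do apply to the profinite completions. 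With that one observation added, your proof is complete.
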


It is of independent interest to determine under what
 circumstances two multi-EGS groups are profinitely isomorphic.  In
 Remark~\ref{rem:not-iso-but-prof-iso} we collect some examples of
 non-isomorphic but profinitely isomorphic multi-EGS groups.

\smallskip


\noindent\textit{Organisation}. Section~\ref{sec:prelim} contains
preliminary material on regular branch groups. In
Section~\ref{sec:groups}, we formally define the GGS-groups and
multi-EGS groups, and we state some of their basic properties. In
Section~\ref{sec:sandwich} we prove Theorem~\ref{thm:sharper-CSP} and
in Section~\ref{sec:chain} we prove
Theorem~\ref{thm:strong-sandwich-gives-chain}. Finally, in
Section~\ref{sec:width} we prove our remaining results concerning
multi-EGS groups, before ending with  
Appendix~\ref{sec:Sunic} which concerns the \v{S}uni\'{c} groups.

\medskip

\noindent\textit{Notation.}
The set of positive integers is denoted
  by $\mathbb{N}$ and the set of non-negative integers
  by~$\mathbb{N}_0$.  We write $\mathbb{F}_p =
\mathbb{Z}/p\mathbb{Z}$ for the finite field with $p$ elements.  The
terms of the lower central series of a group $G$ are denoted by
$\gamma_i(G)$, $i \in \mathbb{N}$.  We write
$G' = [G,G] = \gamma_2(G)$ for the commutator subgroup.  Throughout we
use left-normed commutators, e.g., $[x,y,z] = [[x,y],z]$.

\medskip

\noindent\textbf{Acknowledgements.} We are grateful to Gustavo
Fern\'{a}ndez-Alcober and Marialaura Noce for their helpful comments,
and to Mikel Garciarena for his help with GAP.


\section{Preliminaries} \label{sec:prelim}

Here we recall the notion of regular branch groups and related
notions.  We establish some prerequisites and notation for the rest of
the paper.  For more information, see~\cite{BarthGrigSunik}.

\subsection{The \texorpdfstring{$p$}{p}-adic tree and its automorphisms}
Let $p$ be a prime and let $T$ be the \emph{$p$-adic tree}, that is,
an infinite regular rooted tree where every vertex has $p$
descendants.  Taking $X = \{1,2,\dots,p\}$ as an alphabet on $p$
letters, the set of vertices of~$T$ can be identified with the free
monoid~$X^*$.  In accordance with this identification, the root of~$T$
is the empty word~$\varnothing$, and for each word $v\in X^*$ and
letter~$x\in X$, there is an edge connecting $v$ to $vx$.  There is a
natural length function $\lvert \cdot \rvert$ on~$X^*$ which is in
line with the combinatorial distance between vertices of~$T$.  The
vertices that are at distance~$n$ from the root form the \emph{$n$th
  layer} of the tree.  The \emph{boundary}~$\partial T$ consists of
all infinite simple rooted paths and is naturally in one-to-one
correspondence with the $p$-adic integers.

For a vertex $u$ of $T$, we write $T_u$ for the full rooted subtree
of~$T$ that has its root at~$u$, so $T_u$ includes all vertices $v$
with $u$ a prefix of~$v$. For any two vertices $u$ and $v$ the
subtrees $T_u$ and $T_v$ are isomorphic under the map that deletes the
prefix $u$ and replaces it by the prefix~$v$.  Using this natural
identification of subtrees, we can describe induced actions of
automorphisms on subtrees in terms of automorphisms of $T$ itself, as
follows.

Every automorphism of~$T$ must fix the root, and the orbits of
$\Aut T$ on~$T$ are precisely its layers.  For $f \in \Aut T$, the
image of a vertex $u$ under $f$ will be denoted by~$u^f$.  For a
vertex~$u$, considered as a word over $X$, and a letter $x \in X$ we
have $(ux)^f=u^fx'$ where $x' \in X$ is uniquely determined by $u$
and~$f$.  This yields a permutation $f(u)\in \text{Sym}(X)$ satisfying
\[
(ux)^f = u^f x^{f(u)}.
\]
We refer to the permutation~$f(u)$ as the \emph{label} of~$f$
at~$u$. An automorphism $f$ is called \emph{rooted} if $f(u)=1$ for
$u\ne\varnothing$.  An automorphism~$f$ is called \emph{directed},
with directed path $\ell$ for some $\ell\in \partial T$, if the
support $\{u \mid f(u)\ne1 \}$ of its labels is infinite and contains
only vertices at distance $1$ from~$\ell$.  The \emph{section} of
$f$ at a vertex $u$ is the unique automorphism $f_u \in \Aut T$ given
by the condition $(uv)^f = u^f v^{f_u}$ for $v \in X^*$.

\subsection{Notable subgroups of \texorpdfstring{$\Aut T$}{Aut T}}\label{subsec:CSP}
Let $G\le \Aut T$. For a vertex $u$, the \emph{vertex stabiliser}
$\mathrm{st}_G(u)$ is the subgroup consisting of all elements in~$G$ that
fix~$u$.  For $n \in \mathbb{N}_0$, the \emph{$n$th level stabiliser}
is the normal subgroup
$\St_G(n)= \cap_{|v|=n} \mathrm{st}_G(v) \trianglelefteq G$.  The full automorphism group $\Aut T$ is a profinite group, with the subgroups $\St_{\Aut T}(n)$, for $n \in \mathbb{N}$, providing a base
of open neighbourhoods for the identity element.  A \emph{congruence
  subgroup} of $G$ is a subgroup $H \le G$ such that
$\St_G(n) \subseteq H$ for some $n \in \mathbb{N}$.  The group
$G\le \Aut T$ has the \emph{congruence subgroup property} if every
finite-index subgroup of $G$ is a congruence subgroup, equivalently if its topological closure $\overline{G}$ in $\Aut T$ yields the
profinite completion of~$G$.

For $n\in \mathbb{N}$, every element $g \in \St_{\Aut T} (n)$ is
determined by its sections at the $n$th level vertices, i.e.\! a
collection $g_1,\dots,g_{p^n}$ of $p^n$ elements of $\Aut T$.
Denoting the vertices of $T$ at level~$n$ by $u_1, \dots, u_{p^n}$,
we obtain a natural embedding
\[
  \psi_n \colon \St_{\Aut T}(n) \longrightarrow
  \prod\nolimits_{i=1}^{p^n} \Aut T_{u_i} \cong \Aut T \times
  \overset{p^n}{\cdots} \times \Aut T, \quad g \mapsto
  (g_1,\dots,g_{p^n}).
\]
For convenience, we will write $\psi=\psi_1$.  For a vertex $u$, we
further write
\[
  \varphi_u \colon \mathrm{st}_{\Aut T}(u) \longrightarrow \Aut T_u
  \cong \Aut T, \quad f \mapsto f_u
\]
for the natural restriction of $f$ to its section $f_u$.

A group $G \le \Aut T$ is \emph{spherically transitive} if it acts
transitively on every layer of~$T$.  The group $G$ is
\emph{self-similar} if $\varphi_u(\mathrm{st}_G(u))\subseteq G$ for
every vertex~$u$, and $G$ is \emph{super strongly fractal} if for
every $n\in\mathbb{N}$ and every $n$th-level vertex~$u$ we have
$\varphi_u(\St_G(n)) = G$.  The group $G$ is said to be \emph{regular
  branch} over a finite-index subgroup $K \le G$, if (i) $G$ is
spherically transitive, (ii) $G$ is self-similar and (iii)
$K\times \overset{p}\dots \times K \subseteq \psi(\St_K(1))$.  We observe that,
if $G$ is regular branch over~$K$, then
$\lvert G :\psi_n^{-1}(K\times \overset{p^n}\dots \times K) \rvert <
\infty$ for all $n\in\mathbb{N}$.


\section{Multi-EGS groups}\label{sec:groups}

Here we recall briefly the notion and basic properties of multi-EGS
groups.  We begin our discussion with GGS- and multi-GGS groups, which
are two important special classes of multi-EGS groups.  The technical
set-up for these groups is less complicated, and we make use of them
to deal with more general multi-EGS groups.  As for the
  rest of the paper, excluding Appendix~\ref{sec:Sunic}, the prime~$p$
  is odd and all groups considered here are subgroups of the
automorphism group $\Aut T$ of the $p$-adic tree~$T$.

\subsection{GGS-groups and multi-GGS groups} \label{sec:GGS} We denote
by $a$ the rooted automorphism corresponding to the $p$-cycle
$(1 \, 2 \, \cdots \, p)\in \text{Sym}(p)$ that cyclically permutes
the $p$ vertices forming the first layer of~$T$.  Given a vector
$ \mathbf{e} =(e_{1}, e_{2},\dots , e_{p-1})\in
(\mathbb{F}_p)^{p-1}\backslash \{\mathbf{0}\}$, a corresponding
directed automorphism $b \in \St_{\Aut T}(1)$ is recursively defined
via
\[
\psi(b)=(a^{e_{1}}, a^{e_{2}},\dots,a^{e_{p-1}},b).
\]
Then $G_{\mathbf{e}}=\langle a, b \rangle$ is the \emph{GGS-group}
associated to the \emph{defining vector} $\mathbf{e}$.
The vector $\mathbf{e}$ is said to be \emph{symmetric} if
$e_{i}=e_{p-i}$ for $i\in\{1,\dots, \frac{p-1}{2}\}$, and
\emph{non-symmetric} otherwise.

By definition $\langle a\rangle \cong\langle b \rangle \cong C_p$ are
cyclic of order~$p$. The GGS-group $G_{\mathbf{e}}$ is a torsion group, and thus an
infinite $p$-group, if and only if
$ \sum\nolimits_{j=1}^{p-1} e_{j}\equiv_p 0$; compare \cite{Vo00}.  We
write $\mathcal{G} = \langle a,b \rangle$ with
$\psi(b) = (a,\dots,a,b)$, for the GGS-group arising from the
constant defining vector~$(1,\dots,1)$ or, indeed, any other constant
non-zero vector.  It is known that $\mathcal{G}$ is not regular
branch; see \cite[Lem.~4.2]{FeZu14} and \cite[Thm.~3.7]{FeGaUr17}.  We
recall a number of basic facts.

\begin{proposition}\label{prop:GGS-facts}\cite[Lem.~3.2, 3.3, 3.5 and Cor.~2.5]{FeZu14}
  Let $G = G_{\mathbf{e}}$ be a GGS-group.  Then the following hold:
  \begin{enumerate}
  \item [\textup{(i)}] if $\mathbf{e}$ is non-symmetric, then $G$ is
    regular branch over $[G,G]$;
  \item [\textup{(ii)}] if $\mathbf{e}$ is non-constant and symmetric,
    then $G$ is regular branch over $\gamma_3(G)$ but not over
    $[G,G]$;
  \item [\textup{(iii)}] $\St_G(2)\subseteq \gamma_3(G)$.
  \end{enumerate}
\end{proposition}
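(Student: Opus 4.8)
The plan is to recall the standard section-computation arguments for GGS-groups and assemble the three parts; since the proposition is quoted from \cite{FeZu14}, I would keep the exposition brief and self-contained in spirit, relegating routine commutator bookkeeping to references. First I would fix notation: write $G=\langle a,b\rangle$ with $\psi(b)=(a^{e_1},\dots,a^{e_{p-1}},b)$, and record the first-level decomposition $G = \langle a\rangle\ltimes\St_G(1)$, where $\St_G(1)$ is generated as a normal subgroup of $G$ by $b$, hence by $b, b^a,\dots,b^{a^{p-1}}$. Via $\psi$, the tuple $(b^{a^i})_{0\le i<p}$ maps onto cyclic shifts of $(a^{e_1},\dots,a^{e_{p-1}},b)$. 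The central computational input is that commutators $[b^{a^i},b^{a^j}]$ lie in $\St_G(2)$, and under $\psi$ their sections are, up to conjugacy, commutators of the form $[a^{e_k},a^{e_l}]=1$ except at the single coordinate where a shifted $b$ meets a power of $a$, so one gets sections like $[b,a^{e}]$ for various $e$. Tracking these carefully is exactly the argument giving (iii): $\St_G(2)\subseteq\gamma_3(G)$, because each generator of $\St_G(2)$, pushed down via $\psi$, is expressed through sections lying in $[\St_G(1),G]\subseteq\gamma_3(G)$ after one uses that $\psi$ is injective and the relevant products telescope.

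For (i) and (ii) the key is to determine $\psi(\St_K(1))$ for $K=G'$ and for $K=\gamma_3(G)$. I would argue as follows. One shows first that $\psi(G') \supseteq$ (and in fact equals) the subgroup of $G\times\dots\times G$ consisting of tuples $(g_1,\dots,g_p)$ with $\prod g_i \in G'$ and each $g_i$ in a suitable subgroup determined by $\mathbf{e}$; the exact shape depends on the rank of the circulant-type matrix built from $\mathbf{e}$ and its shifts. When $\mathbf{e}$ is non-symmetric, a theorem of the cited authors shows this matrix has full enough rank that $G' \times\dots\times G' \subseteq \psi(\St_{G'}(1))$, giving regular branchness over $G'$; this is part (i). When $\mathbf{e}$ is non-constant but symmetric, one instead shows the obstruction lives exactly one level deeper: $G'\times\dots\times G'\not\subseteq\psi(\St_{G'}(1))$ but $\gamma_3(G)\times\dots\times\gamma_3(G)\subseteq\psi(\St_{\gamma_3(G)}(1))$, using that $G/\gamma_3(G)$ is class $2$ and the symmetry of $\mathbf{e}$ forces a single linear relation among the sections that survives modulo $G'$ but dies modulo $\gamma_3(G)$. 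That this relation is genuinely an obstruction — i.e. that $G$ is \emph{not} regular branch over $G'$ — requires exhibiting an element of $G'\times 1\times\dots\times 1$ not in $\psi(\St_{G'}(1))$, which is the delicate direction.

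The main obstacle is the "not over $[G,G]$" half of (ii): proving a negative branching statement means controlling $\psi(\St_{G'}(1))$ precisely rather than just from below, and this is where the arithmetic of the defining vector (symmetric, non-constant) enters in an essential way via the rank of the associated matrix over $\mathbb{F}_p$. I would handle it by computing the abelianisation $G/G'$ explicitly, identifying $\St_G(1)/(\St_G(1)\cap G')$ and the image of $\psi$ on it, and checking that the diagonal contribution of a single $G'$-factor cannot be realised because it would force $\sum e_i \equiv 0$ together with the symmetry relations, contradicting non-constancy; the symmetric non-constant case is precisely the regime where these two constraints are compatible for the abelian quotient but incompatible one step further. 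For the purposes of this paper, however, all of this is available from \cite[Lem.~3.2, 3.3, 3.5 and Cor.~2.5]{FeZu14}, so in the final writeup I would simply cite those results and only sketch how (iii) combines the section computation with injectivity of $\psi$.
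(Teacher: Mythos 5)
The paper offers no proof of Proposition~\ref{prop:GGS-facts}: it is imported verbatim from \cite{FeZu14}, so your closing plan to simply cite \cite[Lem.~3.2, 3.3, 3.5 and Cor.~2.5]{FeZu14} coincides with what the paper does, and for the purposes of the paper that is all that is required.

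However, the sketch you give in support would not survive being written out, so it should not be presented as a correct outline. First, the inclusion $[\St_G(1),G]\subseteq\gamma_3(G)$ is false: $[b,a]\in[\St_G(1),G]$ but $[b,a]\notin\gamma_3(G)$, since $G'/\gamma_3(G)$ is non-trivial and generated by the image of $[a,b]$; what is true is $[\St_G(1),\St_G(1)]\subseteq\gamma_3(G)$, because modulo $\gamma_3(G)$ each $b^{a^i}$ is congruent to $b$ times a central element. Second, your argument for (iii) runs in the wrong direction: checking that the commutators $[b^{a^i},b^{a^j}]$ lie in $\St_G(2)$ produces elements \emph{of} $\St_G(2)$, i.e.\ a lower bound of the shape $\St_G(1)'\subseteq\St_G(2)$, whereas (iii) requires an \emph{upper} bound on $\St_G(2)$; the standard route is to pass to the finite quotient $G/\gamma_3(G)$ (of order $p^3$, generated by the images of $a$, $b$, $[a,b]$) and verify, using the explicit formula for $\psi(b^j[a,b]^k)$, that no non-trivial element of this quotient fixes the first two layers. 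Note also that the sections of elements of $\St_G(2)$ do not lie in $\gamma_3(G)$ in general --- by Proposition~\ref{prop:GGS-rank-p}, in the non-torsion regular branch case $\psi(\St_G(2))=G'\times\overset{p}\dots\times G'$ --- so no argument ``through sections lying in $\gamma_3(G)$'' can work. Third, for the delicate negative half of (ii), the proposed obstruction that realising a diagonal $G'$-coordinate ``would force $\sum e_i\equiv_p 0$, contradicting non-constancy'' is not valid: non-constant symmetric vectors with $\sum e_i\equiv_p 0$ exist (e.g.\ $(1,4,4,1)$ for $p=5$); in \cite{FeZu14} the obstruction comes from the symmetry $e_i=e_{p-i}$ itself, not from any incompatibility with the vector being non-constant. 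None of this affects the paper, since the statement is quoted, but if you include the sketch it needs these corrections.
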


\begin{proposition}\label{prop:GGS-rank-p}\cite[Thm.~2.4(i), Thm.~2.14
  and Lem.~3.4]{FeZu14} Let $G$ be a non-torsion GGS-group that is
  regular branch over $G' = [G,G]$. Then
  $\St_G(2)=\St_G(1)'=\psi^{-1}(G'\times\overset{p}\dots\times G')$.
\end{proposition}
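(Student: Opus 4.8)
The plan is to squeeze the three subgroups into a chain and then match indices. Concretely, I would first show
\[
  \St_G(1)' \,\subseteq\, \psi^{-1}\bigl(G' \times \overset{p}{\dots} \times G'\bigr) \,\subseteq\, \St_G(2),
\]
and then prove that the two outer terms have the same finite index in $\St_G(1)$, which forces all three to coincide.

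For the chain I would use two standard features of a GGS-group $G=\langle a,b\rangle$. First, $G$ acts on the first layer through the surjection $G\to\Sym(p)$ with image $\langle a\rangle\cong C_p$, so $\lvert G:\St_G(1)\rvert=p$ and, the quotient being abelian, $G'\subseteq\St_G(1)$. Second, $\St_G(1)=\langle b^{a^i}\mid 0\le i\le p-1\rangle$: the subgroup $H$ on the right is normalised by $a$, and being a subgroup containing $b$ it is also normalised by $b$, so $H\trianglelefteq G$ with $G/H$ cyclic of order dividing $p$; since moreover $H\subseteq\St_G(1)$, we get $H=\St_G(1)$. Now the first inclusion follows because $\psi$ restricted to $\St_G(1)$ embeds it into $G\times\overset{p}{\dots}\times G$ (self-similarity), whence $\psi(\St_G(1)')=\psi(\St_G(1))'\subseteq G'\times\overset{p}{\dots}\times G'$; the second inclusion follows because $G'\subseteq\St_G(1)$, so any $g\in\St_G(1)$ whose $p$ first-level sections lie in $G'$ has all those sections fixing the first layer, hence $g$ fixes the second layer.

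The crux is the index computation $\lvert\St_G(1):\St_G(2)\rvert=p^p$, and this is where the non-torsion hypothesis enters. The homomorphism $\lambda$ sending $g\in\St_G(1)$ to the tuple of first-level labels of its $p$ sections takes values in $\langle a\rangle^p\cong\mathbb{F}_p^{\,p}$ (all labels of elements of $G$ are powers of $a$) and has kernel exactly $\St_G(2)$; hence $\St_G(1)/\St_G(2)$ is identified with the $\mathbb{F}_p$-span of the vectors $\lambda(b^{a^i})$, which are the $p$ cyclic shifts of the vector attached to $b$, whose entries are $e_1,\dots,e_{p-1}$ together with a single $0$ in the slot of the directed section. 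Identifying $\mathbb{F}_p^{\,p}$ with $R=\mathbb{F}_p[x]/(x^p-1)$ so that cyclic shift becomes multiplication by $x$, this span is the ideal of $R$ generated by a polynomial $v(x)$ of degree $<p$ with $v(1)=\sum_{i=1}^{p-1}e_i$. Since $x^p-1=(x-1)^p$ over $\mathbb{F}_p$, the ring $R$ is local with maximal ideal $(x-1)$; because $G$ is non-torsion we have $v(1)\not\equiv_p 0$, so $v(x)$ is a unit and the ideal equals $R$. Thus $\lvert\St_G(1):\St_G(2)\rvert=p^p$, and in particular $\St_G(1)/\St_G(2)$ is abelian, recovering $\St_G(1)'\subseteq\St_G(2)$.

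To conclude, $\St_G(1)$ is generated by the $p$ elements $b^{a^i}$, each of order $p$, so $\lvert\St_G(1):\St_G(1)'\rvert\le p^p$. Combining this with the chain,
\[
  p^p=\lvert\St_G(1):\St_G(2)\rvert\le\lvert\St_G(1):\St_G(1)'\rvert\le p^p,
\]
so equality holds throughout; together with $\St_G(1)'\subseteq\St_G(2)$ this gives $\St_G(1)'=\St_G(2)$, and the chain then forces $\psi^{-1}\bigl(G'\times\overset{p}{\dots}\times G'\bigr)=\St_G(1)'=\St_G(2)$ as well. I expect the only genuinely delicate point to be the circulant/ideal computation for $\St_G(1)/\St_G(2)$; everything else is formal, and in fact the regular branch hypothesis is not used in this particular statement (only in the results that build on it), the non-torsion hypothesis being the essential one.
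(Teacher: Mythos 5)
Your argument is correct, and it is worth pointing out that the paper itself contains no proof of this proposition: it is imported from Fern\'andez-Alcober and Zugadi-Reizabal \cite{FeZu14}, where it is obtained in the course of computing the orders of congruence quotients, via explicit commutator manipulations. Your route is a compact, self-contained alternative: the two inclusions $\St_G(1)'\subseteq\psi^{-1}(G'\times\overset{p}{\dots}\times G')\subseteq\St_G(2)$ are formal, the label homomorphism $\lambda\colon \St_G(1)\to\mathbb{F}_p^{\,p}$ with kernel exactly $\St_G(2)$ is set up correctly, and the identification of its image with the ideal of $\mathbb{F}_p[x]/(x^p-1)\cong\mathbb{F}_p[x]/(x-1)^p$ generated by a polynomial $v$ with $v(1)=\sum_{i=1}^{p-1}e_i\not\equiv_p 0$ is exactly where the non-torsion hypothesis enters; together with $\lvert\St_G(1):\St_G(1)'\rvert\le p^p$, coming from the $p$ generators $b^{a^i}$ of order $p$, the sandwich closes. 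As a by-product you reprove $\log_p\lvert\St_G(1):\St_G(2)\rvert=p$, which the paper also takes from \cite{FeZu14}.

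One caveat concerning your closing remark. Read literally, with $\psi^{-1}$ denoting the preimage inside $\St_G(1)$, you are right that the regular branch hypothesis plays no role, and your argument applies verbatim even to the constant-vector group $\mathcal{G}$. However, the proposition is invoked later (in the proof of Proposition~\ref{prop:Rm-EGS}(iii)) in the stronger form $\psi(\St_G(2))=G'\times\overset{p}{\dots}\times G'$, i.e.\ together with the containment $G'\times\overset{p}{\dots}\times G'\subseteq\psi(\St_G(1))$, which is needed for the index formula $\log_p\lvert\St_G(2):\St_G(3)\rvert=p\log_p\lvert G':\St_G(2)\rvert$; that containment is precisely what being regular branch over $G'$ supplies. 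So the hypothesis is not redundant in the way the result is used, even though your proof of the literal statement does not need it.
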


Two well-studied GGS-groups include the Gupta--Sidki $p$-group, which
has defining vector $\mathbf{e}=(1,-1,0,\dots,0)$, and the
Fabrykowski--Gupta group for the prime~$p$, defined by
$\mathbf{e}=(1,0,\dots,0)$.  Traditionally, the Fabrykowski--Gupta
group was only considered for $p=3$, but we use the name more
generally to refer to the corresponding group for all $p \ge 3$.

\smallskip

A straightforward generalisation of the GGS-groups is the family of
multi-GGS groups, which is defined as follows. Given
$r \in\{1,\dots, p-1\}$ and a finite $r$-tuple $\mathbf{E}$ of
$\mathbb{F}_p$-linearly independent vectors
\[
\mathbf{e}_i =(e_{i,1}, e_{i,2},\dots , e_{i,p-1})\in
\mathbb{F}_p^{\; p-1}, \qquad i\in \{1,\dots,r \},
\]
the directed automorphisms $b_1, \dots, b_r \in \St_{\Aut T}(1)$ are
recursively defined via
\[
  \psi(b_i)=(a^{e_{i,1}}, a^{e_{i,2}},\dots,a^{e_{i,p-1}},b_i),
  \qquad i\in \{1,\dots,r \}.
\]
The group $G_{\mathbf{E}}=\langle a, b_1, \dots, b_r \rangle$ is
called the \emph{multi-GGS group} associated to the defining vector
system $\mathbf{E}$. For $r=1$ we simply recover the previous notion
of a GGS-group. Properties of multi-GGS groups will be collected among
the results of multi-EGS groups below.

\subsection{Multi-EGS groups}\label{sec:multi-EGS}
Let $r_1, \dots, r_p \in \{0,1,\dots,p-1\}$, with $r_j \not = 0$ for
at least one index~$j$, and set $r=r_1+\cdots +r_p$.  Let
$\mathbf{E} = (\mathbf{E}^{(1)}, \dots, \mathbf{E}^{(p)})$ be a
collection of vector systems
$\mathbf{E}^{(j)} = (\mathbf{e}^{(j)}_1, \dots,
\mathbf{e}^{(j)}_{r_j})$, each consisting of $\mathbb{F}_p$-linearly
independent vectors
\[
  \mathbf{e}^{(j)}_i = \big( e^{(j)}_{i,1}, \dots, e^{(j)}_{i,p-1}
  \big) \in (\mathbb{F}_p)^{p-1}, \qquad i \in \{1, \dots, r_j \}.
\]
The \emph{multi-EGS group} associated to $\mathbf{E}$ is the group
\begin{equation}\label{equ:def-multi-EGS}
  G = G_\mathbf{E} = \langle a, \mathbf{b}^{(1)}, \dots,
  \mathbf{b}^{(p)} \rangle = \big\langle \{a \} \cup \{ b^{(j)}_i \mid
  1 \leq j \leq p, \, 1 \leq i \leq r_j \} \big\rangle,
\end{equation}
where, for each $j \in \{1,\dots,p\}$, the generator family
$\mathbf{b}^{(j)} = \{b^{(j)}_1, \dots, b^{(j)}_{r_j}\}$ consists of
commuting directed automorphisms
$b^{(j)}_i \in \mathrm{St}_{\Aut T}(1)$ recursively defined along the
directed path
\[
\big( \varnothing, (p-j+1), (p-j+1)(p-j+1), \dots \big) \in \partial T
\]
as
\[
  \psi(b^{(j)}_i) = \Big( a^{e^{(j)}_{i,j}}, \dots,
  a^{e^{(j)}_{i,p-1}},b^{(j)}_i, a^{e^{(j)}_{i,1}},\dots,
  a^{e^{(j)}_{i,j-1}} \Big);
\]
we refer to the vector $\mathbf{e}^{(j)}_i$ as the defining vector of
$b^{(j)}_i$. Additionally, we refer to the set
$\{a \} \cup \{ b^{(j)}_i \mid 1 \leq j \leq p, \, 1 \leq i \leq r_j
\} $ as a standard generating system for $G$.

Writing $G' = [G,G]$ for the commutator subgroup of
$G = G_\mathbf{E}$, we see from \cite[Prop.~3.9]{KlTh18} that
\begin{equation*}\label{eq:abelianisation2}
  G/G'=\langle aG',\, b_1^{(1)}G',\, \dots,\, b_{r_1}^{(1)}G',\,\,
  \dots,\,\, b_1^{(p)}G',\, \dots,\, b_{r_p}^{(p)}G' \rangle\cong C_p^{\,r+1}.
\end{equation*}
In particular, the number $r$ of directed automorphisms in a standard
generating system for $G$ is intrinsic to the group.

\begin{definition}\label{def:standard-gens}
  We denote by $r_G$ the total number of directed automorphisms in a
  standard generating system for the multi-EGS group $G$.
\end{definition}

Furthermore, the multi-EGS group~$G=G_\mathbf{E}$ is a torsion group,
and thus an infinite $p$-group, if and only if
$\sum\nolimits_{k=1}^{p-1} e_{i,k}^{(j)} \equiv_p 0$ for all
$j \in \{1,\dots,p\}$ and $i \in \{1, \dots, r_j\}$;
compare~\cite[Lem.~3.13]{ThUr21}.  The multi-EGS group $G$ is just
infinite, unless $G = \mathcal{G}$; see \cite[Cor.~1.2]{ThUr21}. We
recall some additional facts, starting with the following dichotomy.

\begin{proposition}\label{prop:EGS-facts}\cite[Prop.~3.2 and 3.5, Lem.~3.3]{ThUr21}
  Let $G = G_\mathbf{E}$ be a multi-EGS group, and let
  $\dot{\mathbf{E}}$ denote the concatenation of the relevant systems
  $\mathbf{E}^{(1)}, \dots, \mathbf{E}^{(p)}$.
  \begin{enumerate}
  \item[\textup{(i)}] If $\dot{\mathbf{E}}$ contains at least one
    non-symmetric vector or at least two linearly independent vectors,
    then $G$ is regular branch over $[G,G]$.
  \item[\textup{(ii)}] If there is a non-constant, symmetric vector
    $\mathbf{e} \in \mathbb{F}_p^{\; p-1}$ such that, for every
    $j \in \{1, \dots,p\}$, the generating family $\mathbf{b}^{(j)}$
    is either empty or consists of a single directed automorphism
    $b_1^{(j)}$ with defining vector
    $\mathbf{e}_1^{(j)} \in \mathbb{F}_p \mathbf{e}$, then $G$ is
    regular branch over~$\gamma_3(G)$ but not over $[G,G]$.
    \end{enumerate}
\end{proposition}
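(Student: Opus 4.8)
The plan is to verify, in each case, the three defining conditions of a regular branch group, with nearly all of the work going into the branching inclusion $K \times \overset{p}{\dots} \times K \subseteq \psi(\St_K(1))$, where $K = [G,G]$ in part~(i) and $K = \gamma_3(G)$ in part~(ii). Spherical transitivity is immediate: $a$ permutes the first layer cyclically, and an induction on the level, using that the first-layer sections of the standard generators again generate $G$, propagates transitivity down the tree. Self-similarity is equally direct from the recursive definitions, since every section of $a$ is trivial and every section of a directed generator $b_i^{(j)}$ is either a power of $a$ or $b_i^{(j)}$ itself. So the real content is the branching condition.

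For part~(i), I would first reduce to a single coordinate: since $\langle a \rangle$ cyclically permutes the $p$ direct factors of $K \times \overset{p}{\dots} \times K$, it suffices to produce elements of $\St_{[G,G]}(1)$ whose $\psi$-images exhaust $[G,G] \times 1 \times \dots \times 1$, after which conjugation by powers of $a$ together with multiplication gives the full product. I would build these up along the lower central series of $G$. At the top layer, $[G,G]/\gamma_3(G)$ is generated by the images of the commutators $[a,b_i^{(j)}]$ and $[b_i^{(j)}, b_k^{(l)}]$ with $j \neq l$; the elements $[(b_i^{(j)})^{a^s}, (b_k^{(l)})^{a^t}]$ and suitable left-normed commutators involving $a$ all lie in $\St_{[G,G]}(1)$, and the key observation is that, since $\langle a \rangle$ is abelian, any coordinate of such a $\psi$-image in which both entries are powers of $a$ is trivial. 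Hence the support of each such image is concentrated at the few coordinates carrying a ``$b$''-section, where the value has the shape $[b, a^{e}]$ or $[b,b']$. This turns the problem into linear algebra over $\mathbb{F}_p$: combine the available images so that the result is supported in a single coordinate and realises a prescribed generator of $[G,G]/\gamma_3(G)$. The hypothesis is exactly what makes this system solvable — if $\dot{\mathbf{E}}$ contains a non-symmetric vector, or two linearly independent ones, the coefficient matrix assembled from the entries $e_{i,k}^{(j)}$ has full rank. Once the $\gamma_3(G)$-layer is matched, I would descend into $\gamma_3(G)$ and iterate: the deeper nilpotent layers are governed by commutators of smaller weight among the sections, so the same mechanism applies more easily, and a standard finite-index bookkeeping closes out the regular branch property over $[G,G]$.

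For part~(ii), the same machinery yields $\gamma_3(G) \times \overset{p}{\dots} \times \gamma_3(G) \subseteq \psi(\St_{\gamma_3(G)}(1))$ — one simply never attempts to realise the $[G,G]/\gamma_3(G)$-layer, which is precisely where the obstruction lives. To show $G$ is \emph{not} regular branch over $[G,G]$, I would exhibit that obstruction directly: the hypothesis (each nonempty $\mathbf{b}^{(j)}$ a single directed automorphism with defining vector in $\mathbb{F}_p\mathbf{e}$, with $\mathbf{e}$ symmetric and non-constant) furnishes a surjective homomorphism $[G,G]/\gamma_3(G) \to \mathbb{F}_p$ — morally, pairing against $\mathbf{e}$ along the reflection $k \mapsto p-k$ — that vanishes on the image of $\St_{[G,G]}(1)$ in $[G,G]/\gamma_3(G)$; consequently $\psi(\St_{[G,G]}(1))$ cannot contain $[G,G] \times \overset{p}{\dots} \times [G,G]$.

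The step I expect to be the main obstacle is the $\psi$-image computation and the ensuing rank analysis: one must work out, for each relevant family of commutators and modulo $\gamma_3(G)$, exactly how the entries $e_{i,k}^{(j)}$ enter the coefficient matrix, and then confirm that non-symmetry (or the presence of two linearly independent vectors) is precisely the condition under which that matrix has full rank — equivalently, to isolate the reflection invariant that both forces the failure over $[G,G]$ in case~(ii) and has to be checked to be the \emph{only} obstruction in case~(i). Everything else — spherical transitivity, self-similarity, the passage between levels, and the finite-index estimates — is routine once this bookkeeping is in place.
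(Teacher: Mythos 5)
You should first be aware that the paper does not prove this proposition at all: it is quoted verbatim from \cite[Prop.~3.2 and 3.5, Lem.~3.3]{ThUr21}, so your attempt can only be compared with the method used there and with the related computations the paper does carry out (e.g.\ in the proof of Proposition~\ref{pro:min-numb-gen-congr}). Measured against that, your outline has the right ingredients at the top layer but is missing the step that carries the whole proof. The branching condition $[G,G]\times\overset{p}{\dots}\times[G,G]\subseteq\psi(\St_{[G,G]}(1))$ is an \emph{exact} containment in a discrete, non-nilpotent group, and your plan to ``match the $\gamma_3(G)$-layer, then descend into $\gamma_3(G)$ and iterate, with standard finite-index bookkeeping'' does not close: iterating along the lower central series only yields containments modulo $\gamma_n(G)\times\overset{p}{\dots}\times\gamma_n(G)$ for every $n$, and since $\psi(\St_{[G,G]}(1))$ need not be closed in the pro-nilpotent topology of the discrete group, you cannot pass to the limit. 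The way this is closed in \cite{FeZu14} and \cite{ThUr21} is to first establish the exact statement that $G$ is regular branch over $\gamma_3(G)$ (valid for any non-constant defining vector), by exhibiting elements of $\St_{\gamma_3(G)}(1)$ whose $\psi$-images are \emph{exactly} supported in a single coordinate; only then is your modulo-$\gamma_3(G)$ linear algebra legitimate, because the errors lie in $\gamma_3(G)\times\overset{p}{\dots}\times\gamma_3(G)$ and can be absorbed. Your sketch never supplies this anchor, and in part~(ii) it simply asserts that ``the same machinery'' gives the $\gamma_3(G)$-branching, although the machinery you describe only treats the layer $[G,G]/\gamma_3(G)$. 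Moreover, the asserted equivalence ``hypothesis $\Leftrightarrow$ full rank of the matrix of the $e^{(j)}_{i,k}$'' is exactly the content to be proved and is left unverified; note the hypothesis is weaker than full rank of the vector system, and the relevant matrix must also involve the reflected entries $e^{(j)}_{i,p-k}$.

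The negative half of (ii) is also not correct as stated. Since $[G,G]\subseteq\St_G(1)$, one has $\St_{[G,G]}(1)=[G,G]$, so its image in $[G,G]/\gamma_3(G)$ is everything and no surjective homomorphism onto $\mathbb{F}_p$ can vanish on it. The obstruction that actually works --- and which the paper itself uses inside the proof of Proposition~\ref{pro:min-numb-gen-congr} --- is not a functional on $[G,G]/\gamma_3(G)$ but the product of all $p$ coordinates of $\psi(h)$, read modulo $\gamma_3(G)$: on the generators $[b^{a^k},b^{a^\ell}]$ of $\St_G(1)'$ this product equals $[b,a]^{\,e_{\ell-k}-e_{p-\ell+k}}$, which is trivial precisely because $\mathbf{e}$ is symmetric, and it is likewise trivial on the $[x,a]$-type generators of $[G,G]$; since $([a,b],1,\overset{p-1}{\dots},1)$ has coordinate product $[a,b]\notin\gamma_3(G)$, it cannot lie in $\psi([G,G])$, which is what rules out branching over $[G,G]$. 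Your phrase ``pairing against $\mathbf{e}$ along the reflection'' gestures at this, but the homomorphism you posit does not exist; the functional must be defined on the tuple of coordinates of $\psi$-images, not on $[G,G]/\gamma_3(G)$.
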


\begin{proposition}\label{prop:EGS-subdirect}\cite[Lem.~3.7]{ThUr21}
  Let $G$ be a multi-EGS group.
  \begin{enumerate}
  \item[\textup{(i)}] If $G$ is regular branch over $[G,G]$, then
    $\psi([G,G])$ is subdirect in $G\times\cdots\times G$.
  \item[\textup{(ii)}] If $G$ is regular branch over~$\gamma_3(G)$ but
    not over $[G,G]$, then $\psi(\gamma_3(G))$ is subdirect in
  $G\times\cdots\times G$.
  \end{enumerate}
\end{proposition}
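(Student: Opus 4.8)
The plan is to treat both parts simultaneously, writing $K$ for $[G,G]$ in~(i) and for $\gamma_3(G)$ in~(ii). Since $K \subseteq [G,G] \subseteq \St_G(1)$, the embedding $\psi$ restricts to $K$, and self-similarity of $G$ gives $\psi(K) \le G \times \cdots \times G$. The first step is to note that the $p$ coordinate projections of $\psi(K)$ all coincide: conjugation by the rooted automorphism $a$ permutes the first-level sections cyclically and, being rooted, introduces no further conjugation, so $\psi(g^a)$ is a fixed cyclic shift of $\psi(g)$ for every $g \in \St_G(1)$; since $K \trianglelefteq G$, the subgroup $\psi(K)$ is invariant under this shift, whence $\pi_1(\psi(K)) = \cdots = \pi_p(\psi(K)) =: U$. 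It therefore suffices to prove that $U = G$; and since $U$ is a subgroup, for this it is enough to exhibit every standard generator of $G$ inside $U$.

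For part~(i) I would first pin down a non-constant defining vector: since $G$ is regular branch over $[G,G]$, the system $\dot{\mathbf{E}}$ contains a non-symmetric vector or two linearly independent vectors (cf.\ Proposition~\ref{prop:EGS-facts}(i)), and in either case it contains a non-constant one, because the constant vectors span only a line in $\mathbb{F}_p^{\,p-1}$. Fixing a pair $(i,j)$ for which $\mathbf{e}^{(j)}_i$ is non-constant and evaluating $\psi([b^{(j)}_i,a]) \in \psi([G,G])$ coordinate by coordinate, by means of the shift formula $\psi(g^a)_k = g_{k-1}$ and the recursion defining $b^{(j)}_i$, one sees that at all but two coordinates the entry is a pure power of $a$, namely $a^{\,e^{(j)}_{i,m} - e^{(j)}_{i,m+1}}$, a first difference of $\mathbf{e}^{(j)}_i$; non-constancy makes one of these differences non-zero, so $a \in U$. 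Next, for an arbitrary directed generator $b^{(j')}_{i'}$, one coordinate of $\psi([b^{(j')}_{i'},a])$ equals $(b^{(j')}_{i'})^{-1} a^{s}$ for some $s$; since $a \in U$, this gives $b^{(j')}_{i'} \in U$. Hence $U$ contains $a$ together with every directed generator, so $U = G$.

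For part~(ii) observe first that the case is vacuous when $p = 3$, because in $\mathbb{F}_3^{\,2}$ every symmetric vector is already constant; so assume $p \ge 5$. By Proposition~\ref{prop:EGS-facts}(ii) each directed generator is a single $b^{(j)}_1$ whose defining vector lies in $\mathbb{F}_p \mathbf{e}$ for a fixed non-constant symmetric $\mathbf{e}$; fix one such generator, say $b = b^{(j_0)}_1$, with defining vector $\mathbf{e}' = \lambda \mathbf{e}$, $\lambda \ne 0$, which is therefore itself non-constant and symmetric. I would then expand the double commutator $\psi([b,a,a]) \in \psi(\gamma_3(G))$: at all but three coordinates the entry is a pure power of $a$, namely $a^{\,e'_{m-1} - 2e'_m + e'_{m+1}}$, a second difference of $\mathbf{e}'$, while at one of the three remaining coordinates it is exactly $a^{u}b$ for some $u$. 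Now the second differences all vanish precisely when $\mathbf{e}'$ is an arithmetic progression, and a symmetric vector over an odd prime that is an arithmetic progression is necessarily constant; thus some second difference is non-zero, so $a \in U$, and then $a^{u}b \in U$ forces $b \in U$. Running this argument over all directed generators $b^{(j)}_1$ gives $U = G$.

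The coordinatewise evaluations of $\psi([b^{(j)}_i,a])$ and of $\psi([b,a,a])$ amount to routine bookkeeping with the shift formula and the defining recursions. The substantive point, and the step I expect to be the main obstacle, is the arithmetic input guaranteeing that the relevant first, respectively second, differences of the defining vectors are not all zero: in part~(i) this is precisely non-constancy of the chosen vector, while in part~(ii) it is the observation that a symmetric vector over an odd prime which is in arithmetic progression must be constant. A secondary point to get right is the reduction to $U = G$, i.e.\ that cyclic-shift invariance genuinely identifies all coordinate projections of $\psi(K)$; and it is worth noting that, although $K \subseteq U$ also follows from the branch inclusion $K \times \cdots \times K \subseteq \psi(\St_K(1)) = \psi(K)$ (valid since $\St_K(1) = K$), this is in fact not needed once every standard generator has been produced directly.
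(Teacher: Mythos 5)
Your reduction of subdirectness to the single claim $U=G$ (all coordinate projections of $\psi(K)$ coincide by shift-invariance under conjugation by $a$) and the computational core are correct: in (i) the non-$b$ coordinates of $\psi([b,a])$ are exactly the consecutive first differences $a^{e_m-e_{m+1}}$ of the chosen defining vector, so non-constancy yields $a\in U$ and then each directed generator follows from its own commutator; in (ii) the coordinates of $\psi([b,a,a])$ give the second differences, and your observation that a symmetric vector in arithmetic progression over an odd prime must be constant is the right arithmetic input. Note that the paper does not prove this proposition at all --- it is quoted from \cite{ThUr21}*{Lem.~3.7} --- so there is no in-paper argument to compare with; as a self-contained route yours is reasonable, except for one step.

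That step is a genuine gap: in both parts you use Proposition~\ref{prop:EGS-facts} in the converse direction. Part (i) of that proposition states that a non-symmetric vector or two linearly independent vectors \emph{imply} regular branching over $[G,G]$; it does not state that regular branching over $[G,G]$ forces such vectors to exist, and likewise (ii) is only a sufficient condition. What your argument needs, and what the cited results do not give, is the exclusion of the residual configuration in which \emph{every} defining vector is constant. With a single directed generator this is $\mathcal{G}$, which the paper records as not regular branch; but a multi-EGS group may have two or more directed generators along different paths, each with a constant defining vector, and nothing you cite rules out that such a group is regular branch over $[G,G]$, respectively over $\gamma_3(G)$ but not $[G,G]$. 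In that case your computation produces no non-trivial power of $a$, since all first and second differences vanish, and the proof stops; the same unproved converse underlies your claim that case (ii) is vacuous for $p=3$. (One can see that these all-constant groups had better not satisfy the hypotheses: taking the homomorphism $\theta\colon G\to\mathbb{F}_p$ with $\theta(a)=1$ and $\theta(b_i)=\lambda_i$, the constant value of the vector of $b_i$, all $p$ sections of any element of $\St_G(1)$ have equal $\theta$-value, whence every section of every element of $[G,G]$ lies in $\ker\theta\subsetneq G$ and $\psi([G,G])$ is not subdirect; but turning this into ``not regular branch over $[G,G]$ or $\gamma_3(G)$'' is precisely the classification statement you would need to prove or cite from \cite{ThUr21}, and it cannot be deduced from the proposition you are trying to establish without circularity.)
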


\begin{proposition}\label{prop:st-in-[G,G]}\cite[Prop.~4.2]{ThUr21}
  Let $G = G_\mathbf{E}$ be a multi-EGS group that is regular branch
  over $[G,G]$, and let $\dot{\mathbf{E}}$ denote the concatenation of
  the relevant systems $\mathbf{E}^{(1)}, \dots, \mathbf{E}^{(p)}$.
 If the $r_G$ vectors in $\dot{\mathbf{E}}$ are
  linearly independent, then we have $\St_G(r_G+1)\subseteq
  [G,G]$.
\end{proposition}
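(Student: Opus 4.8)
The plan is to control how the level stabilisers of $G$ sit inside the abelianisation. Write $V = G/G'$; by \cite[Prop.~3.9]{KlTh18} this is an $\mathbb{F}_p$-vector space of dimension $r_G+1$, with a basis given by the image of $a$ together with the images of the directed generators. For $n\ge 0$ set $N_n = \St_G(n)G'/G' \le V$. Then $N_0 = V$ has dimension $r_G+1$, and $N_1$ has dimension $r_G$, since $G/\St_G(1)\cong C_p$ is generated by the image of $a$ while the images of the directed generators span $N_1$. The assertion $\St_G(r_G+1)\subseteq G'$ is exactly $N_{r_G+1}=0$, so it suffices to show that the descending chain $V = N_0 \supseteq N_1 \supseteq N_2 \supseteq \cdots$ decreases strictly at every step until it reaches $0$, i.e.\ $\dim N_{n+1}\le \max\{\dim N_n-1,\,0\}$ for all $n$; feeding in $\dim N_0 = r_G+1$ then forces $\dim N_{r_G+1}\le 0$.

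To analyse one step I would use the embedding $\psi$ and the section maps at deeper levels. For each $n$, the map $\St_G(n)\to (G/\St_G(1))^{p^n}\cong\mathbb{F}_p^{\,p^n}$ sending $g$ to $\big(\varphi_u(g)\,\St_G(1)\big)_{\lvert u\rvert=n}$ is a well-defined homomorphism (each section $\varphi_u(g)$ lies in $G$ by self-similarity, and $\varphi_u$ is a homomorphism on $\St_G(n)$ because $g$ fixes $u$), and its kernel is precisely $\St_G(n+1)$. Writing $\theta_n$ for this map and $C_n := \operatorname{im}\theta_n$, we get $\St_G(n)/\St_G(n+1)\cong C_n$, and by the modular law $N_n/N_{n+1}\cong C_n/\theta_n(G'\cap\St_G(n))$. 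So the step at level $n$ is strict precisely when $\theta_n(G'\cap\St_G(n))$ is a \emph{proper} subspace of $C_n$.

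The base level $n=1$ can be handled explicitly, and this is cleanest for multi-GGS groups. Since $\St_G(1)$ is generated by the conjugates $b_i^{a^k}$ and $\psi(b_i^{a^k})$ is the $k$-th cyclic shift of the defining tuple of $b_i$, identifying $\mathbb{F}_p^p$ with the chain ring $R = \mathbb{F}_p[x]/(x^p-1) = \mathbb{F}_p[x]/((x-1)^p)$ (cyclic shift becoming multiplication by $x$) turns $C_1$ into the ideal generated by the polynomials attached to the defining vectors, so $C_1 = ((x-1)^m)$ for some $m<p$. A short computation with the commutators $[a,b_i] = (b_i^{a})^{-1}b_i$, using that powers of $a$ commute — whence $[b_i,b_j]\in\St_G(2)$ already — gives $\theta_1(G') = (1-x)C_1$; since $R$ is local with radical $(x-1)$ and $C_1\ne 0$, Nakayama's lemma shows $C_1/(1-x)C_1$ is one-dimensional, so the first step is strict. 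For a general multi-EGS group the several directed paths are dealt with along the same lines, the computation along each path reducing to the multi-GGS case.

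The deeper steps are the heart of the matter, and I expect them to be the main obstacle. For $n\ge 2$ the image $C_n$ no longer sits inside a single cyclic $R$-module, and showing $\theta_n(G'\cap\St_G(n))\subsetneq C_n$ requires combining the recursion $\psi(\St_G(n)) = \psi(\St_G(1))\cap\big(\St_G(n-1)\times\cdots\times\St_G(n-1)\big)$ with the regular-branch inclusion $G'\times\cdots\times G'\subseteq\psi(\St_{G'}(1))$ and the subdirectness of $\psi(G')$ in $G\times\cdots\times G$ from Proposition~\ref{prop:EGS-subdirect}, so as to pin down which tuples of level-$(n-1)$ data come from $\St_G(n)$ and which from $G'\cap\St_G(n)$. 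It is exactly here that linear independence of the $r_G$ defining vectors is essential: without it one can produce elements that are deep in the tree but not deep in the lower central series, so the chain $N_n$ stabilises at a nonzero subspace and the conclusion fails. Once the level-by-level strict decrease is in hand, the remaining bookkeeping — the linear algebra and the deduction $N_{r_G+1}=0$ — is routine.
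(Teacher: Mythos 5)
Your write-up is a plan rather than a proof, and the part you defer is precisely the content of the proposition. The reduction to the chain $N_n=\St_G(n)G'/G'$ and the level-one computation are fine: indeed $\dim N_0=r_G+1$, $\dim N_1=r_G$, the identification $N_n/N_{n+1}\cong C_n/\theta_n(G'\cap\St_G(n))$ via the modular law is correct, and your claim $\theta_1(G')=(1-x)C_1$ does check out, because commutators of directed generators (even along different paths) have all first-level sections inside $\St_G(1)$ and hence die under $\theta_1$, so the chain-ring/Nakayama argument gives a one-dimensional drop at the first step. But note that this step uses only that the defining vectors are non-zero; the hypothesis of linear independence of the $r_G$ vectors -- which is where the proposition can actually fail without it -- never enters your argument. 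Everything hinges on the assertion that $\theta_n(G'\cap\St_G(n))\subsetneq C_n$ whenever $N_n\neq 0$ for $n\ge 2$, and for this you only list ingredients (the recursion for $\psi(\St_G(n))$, the regular-branch inclusion, subdirectness of $\psi(G')$ from Proposition~\ref{prop:EGS-subdirect}) and state that you ``expect them to be the main obstacle.'' That is a genuine gap, not routine bookkeeping: ruling out that the chain $N_n$ stabilises at a non-zero subspace is essentially equivalent to showing that $G'$ is a congruence subgroup with the stated effective bound, i.e.\ it is the proposition itself, so the framework has not reduced the problem to anything easier, and no argument is given for how linear independence forces the strict decrease level by level.

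For comparison, the paper does not prove this statement internally at all; it imports it as \cite[Prop.~4.2]{ThUr21}, where the proof proceeds by a direct analysis of sections and the branching structure of the multi-EGS group rather than by the abelianisation-chain bookkeeping you set up. So even granting your base step, what you have written cannot be spliced in as a self-contained justification: the inductive heart of the argument, where the hypothesis on $\dot{\mathbf{E}}$ must be used, is missing.
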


\begin{proposition}\cite[Prop.~3.11]{ThUr21} \label{pro:super-strongly-fractal}
  Every regular branch multi-EGS group is super strongly fractal.
\end{proposition}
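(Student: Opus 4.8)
The plan is to establish the slightly stronger statement that $\varphi_u(\St_K(n)) = G$ for every $n \in \mathbb{N}$ and every vertex $u$ at level~$n$, where $K$ denotes the subgroup over which $G$ is regular branch; by Proposition~\ref{prop:EGS-facts} one may take $K = [G,G]$ or $K = \gamma_3(G)$. First I would record that $K \subseteq \St_G(1)$, because $G/\St_G(1)$ is cyclic, being generated by the image of the rooted automorphism~$a$; in particular $\St_K(1) = K$ and $\psi$ is defined on every $\St_K(m)$. Granting the stronger statement, super strong fractality follows at once: for $n \in \mathbb{N}$ and $\lvert u \rvert = n$ we have $\St_K(n) \subseteq \St_G(n)$, hence $G = \varphi_u(\St_K(n)) \subseteq \varphi_u(\St_G(n)) \subseteq G$, the last inclusion by self-similarity.

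For the base case $n = 1$, Proposition~\ref{prop:EGS-subdirect} asserts that $\psi(K)$ is subdirect in $G \times \overset{p}{\dots} \times G$, i.e.\ projects onto each coordinate. Since $K \subseteq \St_G(1)$, the projection at a first-level vertex~$u$ is exactly $\varphi_u(K) = \varphi_u(\St_K(1))$, so $\varphi_u(\St_K(1)) = G$ for every first-level vertex~$u$.

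For the inductive step, assume the claim at level~$n$ and let $u = xu'$ be a level-$(n+1)$ vertex, with $x$ at level~$1$ and $u'$ the corresponding level-$n$ vertex of the subtree rooted at~$x$. Iterating the decomposition of sections gives $\varphi_u(\St_K(n+1)) = \varphi_{u'}\bigl(\mathrm{pr}_x\,\psi(\St_K(n+1))\bigr)$, where $\mathrm{pr}_x$ is the projection onto the $x$-coordinate. Since $G$, hence also $K$, is regular branch, $K \times \overset{p}{\dots} \times K \subseteq \psi(\St_K(1)) = \psi(K)$; consequently every tuple in $\St_K(n) \times \overset{p}{\dots} \times \St_K(n)$ lies in $\psi(K)$, and its $\psi$-preimage, an element of $K = \St_K(1)$ whose first-level sections all stabilise level~$n$, belongs to $\St_K(n+1)$. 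Hence $\St_K(n) \times \overset{p}{\dots} \times \St_K(n) \subseteq \psi(\St_K(n+1))$, so $\mathrm{pr}_x\,\psi(\St_K(n+1)) \supseteq \St_K(n)$, and therefore $\varphi_u(\St_K(n+1)) \supseteq \varphi_{u'}(\St_K(n)) = G$ by the inductive hypothesis. The reverse inclusion is self-similarity, which closes the induction.

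The delicate point is the choice of the auxiliary subgroup. A naive induction on $\varphi_u(\St_G(n))$ does not close cleanly, since $\mathrm{pr}_x\,\psi(\St_G(n+1))$ need not exhaust $\St_G(n)$ --- for a non-torsion GGS-group that is regular branch over $G'$ one even has $\psi(\St_G(2)) = G' \times \overset{p}{\dots} \times G'$ by Proposition~\ref{prop:GGS-rank-p}, so that $\mathrm{pr}_x\,\psi(\St_G(2))$ is a proper subgroup of $\St_G(1)$. This forces one to control sections of elements of $K$, and it is cleanest to work with $\St_K(n)$ throughout: the regular branch property places a full copy of $K$ in each coordinate of $\psi(\St_K(1))$, while the subdirectness of $\psi(K)$ from Proposition~\ref{prop:EGS-subdirect} supplies the base case; the passage back to $\St_G(n)$ at the end recovers the proposition.
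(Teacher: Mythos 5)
Your inductive argument is sound, and it is essentially the expected proof: the paper itself gives no argument for this proposition but simply cites \cite[Prop.~3.11]{ThUr21}, and the mechanism you use --- subdirectness of $\psi(K)$ for the base case, the regular branch inclusion $K \times \overset{p}{\dots} \times K \subseteq \psi(\St_K(1))$ to get $\St_K(n) \times \overset{p}{\dots} \times \St_K(n) \subseteq \psi(\St_K(n+1))$, and induction along $u = xu'$ --- is the standard route and matches the ingredients the present paper imports (Proposition~\ref{prop:EGS-subdirect}). Your remark about why the naive induction on $\St_G(n)$ fails, and the fix of working with $\St_K(n)$ throughout, is exactly the right point.

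There is, however, one concrete gap in the reduction step. You assert that ``by Proposition~\ref{prop:EGS-facts} one may take $K = [G,G]$ or $K = \gamma_3(G)$,'' but that proposition only gives \emph{sufficient} conditions for regular branching over these subgroups; it does not say that every regular branch multi-EGS group is regular branch over one of them. The configurations it leaves out are precisely those in which every defining vector in $\dot{\mathbf{E}}$ is constant (each family then has at most one generator, but several spines may each carry a constant vector): such groups satisfy neither hypothesis (i) (no non-symmetric vector, no two independent vectors) nor hypothesis (ii) (the common vector is constant, not non-constant symmetric). The paper only records that the single-spine constant group $\mathcal{G}$ fails to be regular branch; for the multi-spine constant case nothing quoted here tells you either that the group is not regular branch or that it is regular branch over $[G,G]$ or $\gamma_3(G)$, and without that your case analysis does not cover the full statement ``every regular branch multi-EGS group.'' Moreover, if such a group were regular branch only over some other subgroup $K_0$, your base case would need subdirectness of $\psi(K_0)$, which Proposition~\ref{prop:EGS-subdirect} does not supply. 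So either you must invoke the classification of regular branch multi-EGS groups from \cite{ThUr21} (which is what the citation is really carrying), or restrict the claim to the two cases your argument actually treats --- which, to be fair, are the only cases in which the present paper ever applies the proposition.
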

  
Let $\mathcal{E}$ denote the subclass of $3$-generator multi-EGS
groups $\langle a,b^{(j)}, b^{(k)} \rangle$, where
$j, k\in \{1,\dots,p\}$ with $j<k$ and, subject to replacing the
generators $b^{(j)}, b^{(k)}$ with suitable powers, the associated
symmetric defining vectors
$\mathbf{e}_i^{(j)} = \mathbf{e} = (e_1,\dots,e_{p-1})$ and
$\mathbf{e}_i^{(k)} = \mathbf{f} = (f_1,\dots,f_{p-1})$ satisfy the
following condition: $e_i, f_i\in \{0,1\}$ and $e_i\ne f_i$ for all
$i\in\{1,\dots, p-1\}$. Next, we recall \cite[Thm.~1.1]{ThUr21} with
a small correction, which is to be justified
  in~\cite{ThUr21cor}.  The necessity of such a correction became
  apparent in connection with Theorem~\ref{thm:sharper-CSP} of the
  present paper.

\begin{proposition}[{\cite[Thm.~1.1]{ThUr21}, \cite{ThUr21cor}}] \label{prop:EGS-CSP} Let
  $G = G_\mathbf{E}$ be a multi-EGS group, and let $\dot{\mathbf{E}}$
  denote the concatenation of the relevant systems
  $\mathbf{E}^{(1)}, \dots, \mathbf{E}^{(p)}$.  Then $G$ has the
  congruence subgroup property if and only if $G \notin\mathcal{E}$
  and one of the following holds:
  \begin{enumerate}
  \item[\textup{(i)}] $G$ is regular branch over $[G,G]$ and the $r_G$
    vectors in $\dot{\mathbf{E}}$ are linearly independent;
  \item[\textup{(ii)}] $G$ is regular branch over~$\gamma_3(G)$ but
    not over $[G,G]$, and $r_G=2$.
  \end{enumerate}
\end{proposition}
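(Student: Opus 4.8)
Since the statement is, apart from the exclusion of the class $\mathcal E$, precisely \cite[Thm.~1.1]{ThUr21}, the plan is to recall the architecture of that proof and to locate the point at which the extra hypothesis $G \notin \mathcal E$ — the correction recorded in \cite{ThUr21cor} — must be inserted. The basic reduction is that a group has the congruence subgroup property if and only if every finite-index \emph{normal} subgroup contains a level stabiliser, as any finite-index subgroup contains its finite-index normal core. When $G$ is regular branch over $K$ — with $K = [G,G]$ in case~(i) and $K = \gamma_3(G)$ in case~(ii) — the branch structure cuts this down further: a routine argument shows that every non-trivial normal subgroup of $G$ contains $\psi_n^{-1}(K' \times \overset{p^n}{\cdots} \times K')$ for some $n$, so, using self-similarity together with super strong fractality (Proposition~\ref{pro:super-strongly-fractal}), the congruence subgroup property for $G$ is equivalent to the single assertion that a certain fixed iterated commutator subgroup built from $K$ and $\St_G(1)$ already contains a level stabiliser $\St_G(n)$.

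To obtain the forward implications one computes this subgroup through $\psi$ in terms of the defining system $\dot{\mathbf E}$. In case~(i), linear independence of the $r_G$ vectors in $\dot{\mathbf E}$ yields $\St_G(r_G+1) \subseteq [G,G]$ by Proposition~\ref{prop:st-in-[G,G]}, and the subdirectness of $\psi([G,G])$ from Proposition~\ref{prop:EGS-subdirect}(i) allows one to iterate the branch recursion and trap a level stabiliser inside the relevant commutator subgroup, giving the congruence subgroup property. Case~(ii) is treated analogously, with $\gamma_3(G)$ replacing $[G,G]$ and Proposition~\ref{prop:EGS-subdirect}(ii) in place of~(i); here the constraint of Proposition~\ref{prop:EGS-facts}(ii) forces all directed generators onto a common line, and the recursion closes up exactly when $r_G = 2$.

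For necessity, if $G$ is not regular branch — the case $G = \mathcal G$ — then $G$ fails to be just infinite and so trivially lacks the congruence subgroup property; if $G$ is regular branch but the $r_G$ vectors of $\dot{\mathbf E}$ are linearly dependent, a non-trivial relation among them produces, after threading it down the tree, a finite-index normal subgroup containing no level stabiliser, and $r_G \ge 3$ in the $\gamma_3(G)$-case is ruled out in the same manner. The genuinely new point, and the step I expect to be the main obstacle, is that the groups in $\mathcal E$ fail the congruence subgroup property \emph{despite satisfying}~(i): for $G = \langle a, b^{(j)}, b^{(k)} \rangle \in \mathcal E$ the symmetric defining vectors $\mathbf e$ and $\mathbf f$ satisfy $\mathbf e + \mathbf f = (1,\dots,1)$, and one checks that a suitable product $d = b^{(j)} (b^{(k)})^{a^{t}}$ of $b^{(j)}$ with a rooted conjugate of $b^{(k)}$ satisfies $\psi(d) = (a, \dots, a, b^{(j)}b^{(k)})$, i.e.\ the shape of the directed generator of the non-branch group~$\mathcal G$ (cf.\ \cite[Lem.~4.2]{FeZu14}, \cite[Thm.~3.7]{FeGaUr17}). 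The plan is to exploit this $\mathcal G$-type element to construct an explicit finite-index normal subgroup $N \trianglelefteq G$ with $N \not\supseteq \St_G(n)$ for every $n$; the delicate part, carried out in \cite{ThUr21cor}, is to verify that this obstruction genuinely survives — i.e.\ is not absorbed by the additional branching that the second directed generator introduces into the just infinite group~$G$ — so that the congruence subgroup property indeed fails.
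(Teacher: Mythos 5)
First, a remark on what you are being compared against: the paper does not prove this proposition at all. It is imported, with the correction, from \cite[Thm.~1.1]{ThUr21} together with the forthcoming corrigendum \cite{ThUr21cor}; the only corrigendum material reproduced in the paper is Proposition~\ref{prop:appB}, stated without proof. Your reconstruction of the architecture of \cite{ThUr21} for the previously known parts (reduction of the congruence subgroup property to trapping a level stabiliser inside an iterated commutator subgroup of the branching subgroup, using Propositions~\ref{pro:super-strongly-fractal}, \ref{prop:EGS-subdirect} and \ref{prop:st-in-[G,G]}) is plausible and consistent with the literature, but it is an outline of someone else's proof rather than a proof.

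Judged as a proof, the genuine gap sits exactly at the point where the statement differs from \cite[Thm.~1.1]{ThUr21}: you never show that the groups in $\mathcal{E}$ fail the congruence subgroup property, nor that they are the only additional exceptions; you explicitly defer this to \cite{ThUr21cor}, which is precisely the part a blind proof would have to supply. Your heuristic does not close it: for $G=\langle a,b^{(j)},b^{(k)}\rangle\in\mathcal{E}$ the element $d=b^{(j)}\bigl(b^{(k)}\bigr)^{a^{k-j}}$ does have all non-directed first-level sections equal to $a$, but its remaining section is $b^{(j)}b^{(k)}$, which is not $d$ and whose own sections are spread over two different spokes, so the recursion does not close and no $\mathcal{G}$-type directed automorphism (or congruence-dense copy of $\mathcal{G}$) is actually produced; converting this observation into a finite-index normal subgroup containing no level stabiliser is the real content of the corrigendum. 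Two further steps are asserted but not justified. The claim that $\mathcal{G}$ ``fails to be just infinite and so trivially lacks the congruence subgroup property'' is not a triviality: failure of just-infiniteness does not formally obstruct the CSP (which only constrains finite-index subgroups), and the known proof that $\mathcal{G}$ lacks the CSP, \cite[Thm.~3.7]{FeGaUr17}, requires genuine work; note also that groups in $\mathcal{E}$ \emph{are} just infinite, so no argument of this shape can handle the new exceptional class. Finally, for the necessity of linear independence (and of $r_G=2$ in the $\gamma_3(G)$-case), ``threading a relation down the tree'' is not yet an argument; the clean route, implicit in the present paper, is a generator count: $G/[G,G]\cong C_p^{\,r_G+1}$, while by Proposition~\ref{prop:multi-GGS} together with Corollary~\ref{prop:r-dot} (respectively Proposition~\ref{pro:min-numb-gen-congr}) every congruence quotient needs only $1+\dot{r}_G<1+r_G$ (respectively $3<1+r_G$) generators, so some subgroup of index $p$ is not a congruence subgroup.
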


Also, we document a result concerning multi-EGS groups
  that are regular branch over $\gamma_3(G)$ but not over $[G,G]$,
  which plays a key role in correcting the omission
  in~\cite[Thm.~1.1]{ThUr21}.

\begin{proposition} \cite{ThUr21cor} \label{prop:appB} 
    For $2\le r \le p$, let $1\le j_1<\dots<j_r\le p$ and let
    $\mathbf{e} \in \mathbb{F}_p^{\, p-1}$ be a non-constant,
    symmetric vector.  Consider the multi-EGS group
    \[ G = G_\mathbf{E} = \langle a,b_1,\dots,b_r \rangle \qquad
      \text{ associated to
        $\mathbf{E} = (\mathbf{E}^{(1)}, \dots,
        \mathbf{E}^{(p)})$,} \] where $r_{j_i }=1$,
    $\mathbf{E}^{(j_i)} = (\mathbf{e})$ and $b_i = b_1^{(j_i)}$
    denotes the single directed automorphism for $1 \le i \le r$ while
    $r_j = 0$ for $j \not\in \{ j_1, \dots, j_r \}$.  Furthermore,
    set $B = D \gamma_3(G)$, where
    \[
      D = \langle \Big\{ \prod\nolimits_{i=2}^r \big( b_{j_1}^{\, -1}
      b_{j_i}^{\, a^{j_i-j_1}} \big)^{\alpha_i}\mid \alpha_2,\dots,
      \alpha_r\in\mathbb{F}_p \textup{ such that }
      \sum\nolimits_{i=2}^r (j_i-j_1) \alpha_i= 0 \textup{ in }
      \mathbb{F}_p \Big\} \rangle^G \trianglelefteq G.
    \]

    Then $G$ is regular branch over $B$, and
    $\St_G(5)\subseteq B \subseteq \gamma_3(G)\St_G(n)$ for all
    $n\in\mathbb{N}$.
\end{proposition}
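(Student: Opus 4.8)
The plan is to split the proposition into three assertions and treat them in turn. Since $B=D\gamma_3(G)$ plainly contains $\gamma_3(G)$, and $G$ is regular branch over $\gamma_3(G)$ by Proposition~\ref{prop:EGS-facts}(ii), I first observe that $\overline{\gamma_3(G)}\cap G=\bigcap_{n\ge1}\gamma_3(G)\St_G(n)$, where $\overline{\,\cdot\,}$ denotes closure in $\Aut T$. Hence the assertion ``$B\subseteq\gamma_3(G)\St_G(n)$ for all $n$'' is equivalent to $D\subseteq\overline{\gamma_3(G)}$, and together with $\St_G(5)\subseteq B$ it would pin down $B=\gamma_3(G)\St_G(5)=\overline{\gamma_3(G)}\cap G$. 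So it suffices to prove: (I) $G$ is regular branch over $B$; (II) $\St_G(5)\subseteq B$; (III) $D\subseteq\overline{\gamma_3(G)}$. (When $r=2$ the relation $\sum_i(j_i-j_1)\alpha_i=0$ forces $\alpha_2=0$, so $D=1$ and $B=\gamma_3(G)$, and the statement reduces to Proposition~\ref{prop:EGS-facts}(ii) together with~(II).)

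The engine for everything is a computation of $\psi$ on the generators of $D$. For $2\le i\le r$ put $x_i=b_{j_1}^{-1}b_{j_i}^{\,a^{j_i-j_1}}$. Conjugating $b_{j_i}$ by $a^{j_i-j_1}$ moves its directed ray, which runs through the first-level vertex $p-j_i+1$, onto the ray of $b_{j_1}$ through $p-j_1+1$, and simultaneously rotates the block of labels $a^{e_1},\dots,a^{e_{p-1}}$ that $b_{j_i}$ carries on the remaining first-level vertices; since $\mathbf e$ is symmetric this block coincides exactly with the corresponding block of $b_{j_1}$, so that $\psi(x_i)=(1,\dots,1,b_{j_1}^{-1}b_{j_i},1,\dots,1)$ with the unique non-trivial section at vertex $p-j_1+1$. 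Consequently each generator $y_{\vec\alpha}:=\prod_{i=2}^{r}x_i^{\alpha_i}$ of $D$ lies in $\St_G(2)$, and $\psi(y_{\vec\alpha})$ equals $z_{\vec\alpha}:=\prod_{i=2}^{r}(b_{j_1}^{-1}b_{j_i})^{\alpha_i}$ in coordinate $p-j_1+1$ and is trivial elsewhere. Working modulo $\gamma_3(G)$ and using $b_{j_i}^{\,a^{j_i-j_1}}\equiv b_{j_i}[b_{j_i},a]^{\,j_i-j_1}$ in the class-$2$ quotient, one obtains $y_{\vec\alpha}\equiv z_{\vec\alpha}\cdot\prod_{i=2}^{r}[b_{j_i},a]^{(j_i-j_1)\alpha_i}\pmod{\gamma_3(G)}$, and a short commutator calculation---again exploiting the symmetry of $\mathbf e$ and the indexing of the rays---shows that the relation $\sum_{i=2}^{r}(j_i-j_1)\alpha_i=0$ is precisely what drags the correction factor into $B$; thus $z_{\vec\alpha}\in B$ for every admissible $\vec\alpha$, and the generating set of $D$ is ``self-reproducing'': applying $\psi$ and reading off the non-trivial section sends a generator of $D$ back into $B$.

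Granting this, (III) follows by a short induction on $n$. Each $y_{\vec\alpha}\in\St_G(2)$, so $D\subseteq\St_G(2)$; assume $D\subseteq\gamma_3(G)\St_G(n)$, so that $B=D\gamma_3(G)\subseteq\gamma_3(G)\St_G(n)$, and since $z_{\vec\alpha}\in B$ we may write $z_{\vec\alpha}=c\,s$ with $c\in\gamma_3(G)$ and $s\in\St_G(n)$. Since $G$ is regular branch over $\gamma_3(G)$, the one-coordinate tuple with $c$ in position $p-j_1+1$ is $\psi(c')$ for some $c'\in\St_{\gamma_3(G)}(1)\subseteq\gamma_3(G)$; then $\psi(c'^{-1}y_{\vec\alpha})$ is the tuple with $s$ in position $p-j_1+1$ and $1$ elsewhere, whence $c'^{-1}y_{\vec\alpha}\in\St_G(n+1)$ and $y_{\vec\alpha}\in\gamma_3(G)\St_G(n+1)$. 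As $\gamma_3(G)\St_G(n+1)\trianglelefteq G$ and $D$ is its own normal closure, $D\subseteq\gamma_3(G)\St_G(n+1)$, closing the induction and giving~(III).

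For (I), I would check that $\psi(\St_B(1))$ contains both $\gamma_3(G)\times\overset{p}{\dots}\times\gamma_3(G)$---via $\psi(\St_{\gamma_3(G)}(1))$, using that $G$ is regular branch over $\gamma_3(G)$---and, for every first-level vertex $k$, every $g\in G$ and every admissible $\vec\alpha$, the one-coordinate tuple with $z_{\vec\alpha}^{\,g}$ in position $k$: these come from appropriate $G$-conjugates of the $y_{\vec\alpha}$ (realising an arbitrary section $g$ via the fractality of $G$), which remain in $\St_B(1)$ because $B\trianglelefteq G$ and $G$ is super strongly fractal (Proposition~\ref{pro:super-strongly-fractal}). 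Since, by the closing remark of the previous paragraph, these sections together with $\gamma_3(G)$ normally generate $B$ (this too uses the constraint), one gets $B\times\overset{p}{\dots}\times B\subseteq\psi(\St_B(1))$, i.e.\ $G$ is regular branch over~$B$. Finally, for (II) I would prove $\St_G(5)\subseteq D\gamma_3(G)$---combined with (III) this yields $\St_G(5)\subseteq\gamma_3(G)\St_G(n)$ for all $n$, hence $\St_G(5)\subseteq B$---by computing the quotients $\St_G(1)/\gamma_3(G),\dots,\St_G(5)/\gamma_3(G)$ through the recursion and the regular branch structure over $\gamma_3(G)$, and verifying that the descending chain $\bigl(\St_G(k)\gamma_3(G)\bigr)_k$ has stabilised by $k=5$, with stable value $D\gamma_3(G)$. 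I expect (II) to be the main obstacle: pinning down the precise level at which the congruence series enters $D\gamma_3(G)$, and confirming that $D$---rather than a smaller or larger normal subgroup---is the right one, rests on a careful accounting of the commutators among $a$ and the $b_{j_i}$ modulo $\gamma_3(G)$, where the symmetry of $\mathbf e$ and the indexing of the directed rays do the essential work.
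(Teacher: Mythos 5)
The paper does not prove Proposition~\ref{prop:appB} at all: it is quoted from the forthcoming corrigendum \cite{ThUr21cor}, so there is no internal argument to measure you against, and your attempt has to stand on its own. Your opening computation is correct: conjugating $b_{j_i}$ by $a^{j_i-j_1}$ aligns its tuple with that of $b_{j_1}$, so $\psi\bigl(b_{j_1}^{-1}b_{j_i}^{\,a^{j_i-j_1}}\bigr)=(1,\dots,1,b_{j_1}^{-1}b_{j_i},1,\dots,1)$, and the class-two identity $y_{\alpha}\equiv z_{\alpha}\,w_{\alpha}\pmod{\gamma_3(G)}$ with $w_{\alpha}=\prod_{i=2}^r[b_{j_i},a]^{(j_i-j_1)\alpha_i}$ is also right. (Minor point: the coincidence of the off-path labels has nothing to do with the symmetry of $\mathbf{e}$ --- it holds for any common defining vector; symmetry is what forces $G$ to be regular branch over $\gamma_3(G)$ rather than over $[G,G]$.) The first genuine gap is the assertion, justified only by an unspecified ``short commutator calculation'', that the constraint $\sum_i(j_i-j_1)\alpha_i=0$ forces $w_{\alpha}\in B$, hence $z_{\alpha}\in B$. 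Both your induction for the inclusion $B\subseteq\gamma_3(G)\St_G(n)$ and your branching argument for (I) hinge on exactly this: the inductive step needs $z_{\alpha}\in\gamma_3(G)\St_G(n)$, and the branching argument needs $\langle z_{\alpha}\rangle^G\gamma_3(G)\supseteq B$. Unwinding it, the claim amounts to $[b_{j_i}b_{j_k}^{-1},a]\in B$ for all $i,k\ge 2$, i.e.\ to identifying further specific elements of the congruence closure of $\gamma_3(G)$ --- a statement of the same nature and depth as the proposition itself. Nothing in the displayed constraint visibly produces it, no computation is offered, and it is not even clear that it is true; if it fails, the whole scheme for (III) and (I) collapses, and if it holds, proving it requires precisely the congruence-quotient analysis you are trying to bypass.

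The second gap is assertion (II), $\St_G(5)\subseteq B$: here you only describe an intended computation of $\St_G(1)/\gamma_3(G),\dots,\St_G(5)/\gamma_3(G)$ and state that the chain $\bigl(\St_G(k)\gamma_3(G)\bigr)_k$ should stabilise at $D\gamma_3(G)$ by $k=5$, conceding it is ``the main obstacle''. That is a plan, not a proof, and it is the quantitative heart of the statement; it is needed even in the degenerate case $r=2$, where $D=1$ and (II) already asserts the non-obvious inclusion $\St_G(5)\subseteq\gamma_3(G)$ (note that the GGS-group fact $\St_G(2)\subseteq\gamma_3(G)$ from Proposition~\ref{prop:GGS-facts} does not transfer automatically to these multi-EGS groups). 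So, while your reformulation $B=\gamma_3(G)\St_G(5)=\overline{\gamma_3(G)}\cap G$ and the reduction to (I)--(III) are sound, neither of the two substantive inclusions is actually established, and the key membership claim $z_{\alpha}\in B$ that drives (I) and (III) is left unsupported.
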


Next we complement Definition~\ref{def:standard-gens}, by
  discussing the invariant $\dot{r}_G$ of the multi-EGS group~$G$.
  
  \begin{definition}\label{def:rdot}
    We denote by $\dot{r}_G$ the maximal number of linearly
    independent defining vectors for the directed automorphisms in a
    standard generating system for~$G$, i.e.\ the dimension of
    the $\mathbb{F}_p$-subspace generated by the vector system
    $\dot{\mathbf{E}}$ resulting from concatenating the vector systems
    $\mathbf{E}^{(1)}, \dots, \mathbf{E}^{(p)}$
    that define $G = G_\mathbf{E}$.
  \end{definition}

Using ideas similar to \cite[Proof of Lem.~3.1]{Pe07} or
\cite[Proof of Lem.~4.1]{ThUr21}, one obtains the following result,
which is to be explained in more detail in~\cite{FeGuTh26}.

\begin{proposition}\cite[Cor.~4.2]{FeGuTh26}\label{prop:multi-GGS}
  Let $G = G_\mathbf{E}$ be a multi-EGS group that is regular branch
  over $[G,G]$.  Let $\dot{\mathbf{E}}$ denote the
    concatenation of the relevant systems
    $\mathbf{E}^{(1)}, \dots, \mathbf{E}^{(p)}$, and let $H$ be the
    multi-GGS group defined by the single vector system
    $\dot{\mathbf{E}}$.  Then
    $G \St_{\Aut T}(n) = H \St_{\Aut T}(n)$ for every
    $n \in \mathbb{N}$ and consequently the congruence completions
    $\overline{G}$ and $\overline{H}$ coincide.
\end{proposition}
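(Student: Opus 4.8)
The plan is to show the two multi-EGS groups $G = G_\mathbf{E}$ and the multi-GGS group $H = H_{\dot{\mathbf{E}}}$ have the same image in every finite quotient $\Aut T / \St_{\Aut T}(n)$, which immediately yields $G\St_{\Aut T}(n) = H\St_{\Aut T}(n)$ and hence $\overline{G} = \overline{H}$ after intersecting over all~$n$. The first step is to recall the key algebraic device behind results of the type \cite[Lem.~3.1]{Pe07} and \cite[Lem.~4.1]{ThUr21}: if $b^{(j)}_i$ is a directed generator of $G$ along the path $((p-j+1),(p-j+1),\dots)$ with defining vector $\mathbf{e}^{(j)}_i$, and $c_i$ is the directed generator of $H$ along the \emph{standard} path $(1,1,\dots)$ with the \emph{same} defining vector $\mathbf{e}^{(j)}_i$, then $b^{(j)}_i$ and $c_i$ have identical sections (up to the rooted relabelling induced by the different entry point into the first layer) — in fact one should verify that $b^{(j)}_i = x^{-1} c_i\, x \cdot z$ for a suitable rooted automorphism $x$ and an element $z \in \St_{\Aut T}(1)$ whose first-layer sections already lie in the appropriate level stabiliser. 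The cleanest formulation is by induction on~$n$: assuming that for every directed generator we have agreement of $G$ and $H$ modulo $\St_{\Aut T}(n-1)$ in each first-layer section, one lifts through the embedding $\psi$ to deduce agreement modulo $\St_{\Aut T}(n)$.

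Concretely, I would argue as follows. Both $G$ and $H$ contain the rooted automorphism~$a$, so $G\St_{\Aut T}(1) = H\St_{\Aut T}(1) = \langle a\rangle\St_{\Aut T}(1)$, giving the base case $n=1$. For the inductive step, suppose $G\St_{\Aut T}(n) = H\St_{\Aut T}(n)$. Since $G$ is regular branch over $[G,G]$, Proposition~\ref{pro:super-strongly-fractal} tells us $G$ is super strongly fractal, so $\varphi_u(\St_G(1)) = G$ for every first-layer vertex~$u$; the analogous fact holds for~$H$ (it is also regular branch over $[H,H]$, because $\dot{\mathbf{E}}$ inherits a non-symmetric vector or two independent vectors from $\mathbf{E}$, by Proposition~\ref{prop:EGS-facts}(i)). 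Now take any $g \in G$; after multiplying by a power of~$a$ we may assume $g \in \St_G(1)$, and write $\psi(g) = (g_1,\dots,g_p)$ with each $g_k \in G$. By the inductive hypothesis there are $h_k \in H$ with $g_k \equiv h_k \bmod \St_{\Aut T}(n)$. The point is that $(h_1,\dots,h_p)$ together with the relevant first-layer data is again realised inside $\psi(\St_H(1))\cdot(\St_{\Aut T}(n)\times\cdots\times\St_{\Aut T}(n))$: one uses that $\psi(\St_H(1)) \supseteq \psi([H,H])$ is subdirect in $H\times\cdots\times H$ (Proposition~\ref{prop:EGS-subdirect}(i)), so one can adjust the tuple $(h_1,\dots,h_p)$ componentwise, correcting the errors in $\St_{\Aut T}(n)$, to land in $\psi(\St_H(1))$. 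Reassembling via $\psi^{-1}$ shows $g \in H\St_{\Aut T}(n+1)$, and symmetrically $H \subseteq G\St_{\Aut T}(n+1)$, completing the induction.

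I expect the main obstacle to be the bookkeeping in the inductive step: one must keep careful track of the rooted relabellings coming from the fact that the directed generators $b^{(j)}_i$ of~$G$ enter the first layer at vertex $p-j+1$ rather than at vertex~$1$, so that the tuple $\psi(b^{(j)}_i)$ is a cyclic shift of $\psi$ of the corresponding $H$-generator; this shift is implemented by conjugation by a power of~$a$ and must be shown not to interfere with the level-stabiliser congruences. The subtle part is ensuring the correction terms really can be absorbed — this is exactly where super strong fractality and the subdirectness of $\psi([G,G])$ (resp.\ $\psi([H,H])$) are needed, to guarantee that the componentwise adjustments can be made \emph{simultaneously} by a single element of $\St_H(1)$. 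Once that is set up, intersecting $G\St_{\Aut T}(n) = H\St_{\Aut T}(n)$ over $n \in \mathbb{N}$ gives $\overline{G} = \overline{H}$ inside $\Aut T$, since the closures are the intersections of these open sets; and the finite minimal-generating-number statement follows because $\overline{H}$ is topologically generated by $a$ together with the $\dot r_G$ directed generators associated to a basis of the span of~$\dot{\mathbf{E}}$.
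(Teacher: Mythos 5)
Your overall strategy (induct on the congruence level, rotate the spine of each directed generator to the standard position by conjugating with a rooted power of $a$, and absorb the remaining discrepancy using the regular branch structure) is the right one, and indeed the paper itself gives no proof — it cites \cite{FeGuTh26} and only indicates that the argument follows the ideas of \cite[Lem.~3.1]{Pe07} and \cite[Lem.~4.1]{ThUr21}. However, your inductive step has a genuine gap. Knowing that each section $g_k$ of $g \in \St_G(1)$ lies in $H\St_{\Aut T}(n)$ does \emph{not} let you conclude that the tuple $(g_1,\dots,g_p)$ lies in $\psi(\St_H(1))\cdot\bigl(\St_{\Aut T}(n)\times\cdots\times\St_{\Aut T}(n)\bigr)$: the subgroup $\psi(\St_H(1))$ is a \emph{proper} subgroup of $H\times\overset{p}{\dots}\times H$ (already modulo $[H,H]\times\cdots\times[H,H]$ its image in $(H/[H,H])^p$ is proper), and subdirectness of $\psi([H,H])$ only says that each single-coordinate projection is onto — it gives you no licence to adjust the $p$ coordinates independently and land in $\psi(\St_H(1))$ simultaneously. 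The tell-tale sign is that your inductive step never uses that $G$ and $H$ have the same concatenated defining vector system $\dot{\mathbf{E}}$: the hypotheses you do invoke (self-similarity, super strong fractality, regular branching over the derived subgroup, subdirectness) are shared by any two non-torsion GGS-groups with non-proportional defining vectors, yet such groups already have different images modulo $\St_{\Aut T}(2)$, so an argument of the shape you describe would prove a false statement. Concretely, the step from $n=1$ to $n=2$ must use that the level-one label vectors of the $G$-generators lie in the span of the (cyclic shifts of the) label vectors of the $H$-generators, i.e.\ precisely the equality of defining data.

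Relatedly, the claim in your first paragraph that $b^{(j)}_i = x^{-1}c_i x\cdot z$ with the first-layer sections of $z$ ``already in the appropriate level stabiliser'' is not correct as stated: if $x=a^t$ is chosen so that $c_i^{\,x}$ and $b^{(j)}_i$ have the same first-layer decomposition away from the spine vertex, then $z$ has exactly one non-trivial first-layer section, namely $c_i^{\,-1}b^{(j)}_i$ at the spine vertex, and this element does not lie in any deep level stabiliser. What one actually has to prove — and this is the real content of the proposition — is that this single-vertex discrepancy lies in $[H,H]\,\St_{\Aut T}(n)$ for every $n$, so that it can be absorbed into $[H,H]\times\overset{p}{\dots}\times[H,H]\subseteq\psi(\St_H(1))$; this is where regular branching over $[G,G]$ and the matching of defining vectors genuinely enter, and it requires a recursive argument in the spirit of \cite[Lem.~3.1]{Pe07}, with the induction hypothesis strengthened to a congruence modulo $[H,H]\,\St_{\Aut T}(n)$ (not merely modulo $\St_{\Aut T}(n)$), since otherwise the induction does not close. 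I would also drop the final sentence about minimal generator numbers: that is the content of Corollary~\ref{prop:r-dot} and is not part of the statement you are proving.
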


\begin{remark} \label{rem:not-iso-but-prof-iso} From
  Proposition~\ref{prop:multi-GGS} and \cite{PeThXX} it is
  easy to construct pairs of multi-EGS groups, $G$ and $H$, such that
  $G$ and $H$ are profinitely isomorphic, but $G \not \cong H$.
  
  Indeed, it is enough to arrange for $G$ to be a multi-EGS group that
  is not a multi-GGS group and for $H$ to be a multi-GGS group, with
  $\dot{r}_G = r_G = r_H \ge 3$ and the concatenations of the defining
  vector systems for $G$ and $H$ being the same.  Under these
  conditions, Propositions~\ref{prop:EGS-facts} and \ref{prop:EGS-CSP}
  guarantee that $G$ and $H$ have the congruence subgroup property so
  that Proposition~\ref{prop:multi-GGS} implies
  $\widehat{G} \cong \overline{G} = \overline{H} \cong \widehat{H}$.
  As for $G \not \cong H$, we use \cite[Cor.~1]{GrWi03} and
  \cite[Proof of Cor.~3.8]{KlTh18} to reduce to $G$ and $H$ not being
  conjugate in $\Aut T$, which follows from~\cite[Prop.~3.5]{PeThXX}.
\end{remark}

From Propositions \ref{prop:multi-GGS} and \ref{prop:st-in-[G,G]},
together with \cite[Prop.~4.3]{AlKlTh16}, we deduce the following, which is of independent interest.

\begin{corollary}\label{prop:r-dot}
  Let $G = G_\mathbf{E}$ be a multi-EGS group that is
    regular branch over $[G,G]$, and let $n \ge \dot{r}_G+1$.  Then
    the minimal number of generators for the finite $p$-group
    $G / \St_G(n)$ is $1+\dot{r}_G$.
\end{corollary}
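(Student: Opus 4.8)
The plan is to transfer the question to a multi-GGS group via Proposition~\ref{prop:multi-GGS} and then to trap the minimal generating number of $G/\St_G(n)$ between an obvious upper bound and a lower bound furnished by the branch structure. Throughout, write $d(P)$ for the minimal number of generators of a finite group~$P$, and recall the elementary facts that $d(P)\le d(Q)$ whenever $P$ is a quotient of~$Q$, and that $d(C_p^{\,k})=k$.

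I would begin with the reduction. Let $H$ be the multi-GGS group defined by the concatenated vector system $\dot{\mathbf{E}}$, as in Proposition~\ref{prop:multi-GGS}; that proposition gives $G\,\St_{\Aut T}(n)=H\,\St_{\Aut T}(n)$ for every $n\in\mathbb{N}$. Combining this with the second isomorphism theorem in $\Aut T$ and the identities $G\cap\St_{\Aut T}(n)=\St_G(n)$ and $H\cap\St_{\Aut T}(n)=\St_H(n)$, one obtains $G/\St_G(n)\cong H/\St_H(n)$. Moreover $H$ is generated by $a$ together with $\dot{r}_G$ directed automorphisms, since a directed generator of $H$ whose defining vector lies in the $\mathbb{F}_p$-span of the others equals, inside $\Aut T$, the corresponding product of those others (all directed generators of $H$ being attached to the same path, so that these products are unambiguous); hence a standard generating system of $H$ has $r_H=\dot{r}_G$ directed automorphisms and $H/[H,H]\cong C_p^{\,1+\dot{r}_G}$ by the abelianisation formula for multi-GGS groups. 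Thus it remains to prove $d\big(H/\St_H(n)\big)=1+\dot{r}_G$ for every $n\ge\dot{r}_G+1$.

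Next I would establish the two bounds for $H/\St_H(n)$. The upper bound is immediate from the generating system: $d\big(H/\St_H(n)\big)\le d(H)\le 1+\dot{r}_G$. For the lower bound, I first observe that $H$ is again regular branch over $[H,H]$: this follows from Proposition~\ref{prop:EGS-facts}(i) when $\dot{r}_G\ge 2$, while if $\dot{r}_G=1$ the unique defining vector of $H$ must be non-symmetric — otherwise $G$ would be of the type described in Proposition~\ref{prop:EGS-facts}(ii) or would equal $\mathcal{G}$, contradicting that $G$ is regular branch over $[G,G]$ — so that Proposition~\ref{prop:GGS-facts}(i) applies. Since the $r_H=\dot{r}_G$ defining vectors of $H$ are linearly independent, Proposition~\ref{prop:st-in-[G,G]}, in combination with \cite[Prop.~4.3]{AlKlTh16}, yields $\St_H(\dot{r}_G+1)\subseteq[H,H]$, and hence $\St_H(n)\subseteq[H,H]$ for all $n\ge\dot{r}_G+1$. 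Therefore $H/\St_H(n)$ surjects onto $H/[H,H]\cong C_p^{\,1+\dot{r}_G}$, so $d\big(H/\St_H(n)\big)\ge 1+\dot{r}_G$; the two bounds together finish the proof.

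The points that carry real content are the two imported facts — the reduction to a multi-GGS group with the same congruence completion (Proposition~\ref{prop:multi-GGS}), and the branch-theoretic inclusion $\St_H(\dot{r}_G+1)\subseteq[H,H]$ (Proposition~\ref{prop:st-in-[G,G]} together with \cite[Prop.~4.3]{AlKlTh16}) — plus the small verification that $H$ stays regular branch over its commutator subgroup. I expect this last verification to be the only delicate point, since it is the single place where the hypothesis ``regular branch over $[G,G]$'' (rather than merely over $\gamma_3(G)$) is genuinely used; everything else is routine, and the hypothesis $n\ge\dot{r}_G+1$ enters exactly once, to guarantee $\St_H(n)\subseteq[H,H]$.
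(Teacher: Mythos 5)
Your proposal is correct and follows essentially the same route as the paper: the corollary is stated there as a direct consequence of Proposition~\ref{prop:multi-GGS} (reduction to the multi-GGS group $H$ on the concatenated system, giving $G/\St_G(n)\cong H/\St_H(n)$), Proposition~\ref{prop:st-in-[G,G]} (giving $\St_H(\dot r_G+1)\subseteq [H,H]$, whence the lower bound via the abelianisation $H/[H,H]\cong C_p^{\,1+\dot r_G}$), and \cite[Prop.~4.3]{AlKlTh16}; your upper bound from the generating set is the same trivial half. The only point worth flagging is your verification, in the case $\dot r_G=1$, that $H$ is regular branch over $[H,H]$: your dichotomy ``otherwise $G$ is of the type in Proposition~\ref{prop:EGS-facts}(ii) or $G=\mathcal{G}$'' omits the subcase in which $G$ has several directed generators, along different paths, all with \emph{constant} defining vectors; such a group is neither $\mathcal{G}$ nor of the type described in Proposition~\ref{prop:EGS-facts}(ii), so the propositions you quote do not by themselves yield the contradiction. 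What is needed (and what makes the paper's implicit use of the same reduction legitimate) is the additional fact that multi-EGS groups all of whose defining vectors are constant are never regular branch over their commutator subgroup, so that under the hypothesis of the corollary a one-dimensional span forces a non-symmetric vector and Proposition~\ref{prop:GGS-facts}(i) applies to $H$, exactly as you intend. This is a small incompleteness in the case analysis rather than a flaw in the method; with that fact supplied, your argument is the paper's argument written out in full.
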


The corollary implies that, if the multi-EGS group $G$ is regular
branch over $[G,G]$, then the minimal number of generators for the
congruence completion $\overline{G}$ is $1+\dot{r}_G$. For
completeness, we remark that, if $G$ is regular branch over
$\gamma_3(G)$ but not over $[G,G]$, Proposition~\ref{prop:EGS-facts}
shows that $\dot{r}_G=1$. In this case, the minimal number of
generators for $\overline{G}$ follows a different pattern. Indeed, the
minimal number of generators for $\overline{G}$ is $2$ if $G$ is a
GGS-group, and it is $3$ otherwise as shown in the next
  proposition.  The latter and our proof are related
  to~\cite[Lem.~7.1]{FeGuTh26}.

  \begin{proposition}\label{pro:min-numb-gen-congr}
    Let $G$ be as in Proposition~\ref{prop:appB}. Then $3$ is the
    minimal number of generators for the congruence completion
    $\overline{G}$.
  \end{proposition}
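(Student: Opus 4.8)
The plan is to show that $\overline G$ needs at least $3$ generators and admits a generating set of size $3$. For the lower bound, I would pass to the finite quotient $\overline G/\St_{\overline G}(n)\cong G/\St_G(n)$ and analyse its Frattini quotient, i.e.\ $G/G'\St_G(n)$. Since $\overline G$ is pro-$p$, the minimal number of generators of $\overline G$ equals $\dim_{\mathbb F_p} \overline G/\Phi(\overline G)$, and because $G$ is dense in $\overline G$ this is $\lim_n \dim_{\mathbb F_p} G/G'\St_G(n)$, which stabilises. So it suffices to show $\dim_{\mathbb F_p} G/G'\St_G(n)\ge 3$ for some (equivalently all large) $n$. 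Here $G=\langle a,b_1,\dots,b_r\rangle$ with $r\ge 2$, and $G/G'\cong C_p^{\,r+1}$ (the abelianisation statement in Section~\ref{sec:multi-EGS} applies). The point is that, unlike for a single directed automorphism, at least two of the classes $b_iG'$ survive modulo $G'\St_G(n)$: I would argue that $B=D\gamma_3(G)$ from Proposition~\ref{prop:appB} satisfies $B\subseteq \gamma_3(G)\St_G(n)\subseteq G'\St_G(n)$, and that modulo $B$ the images of $a$ together with two independent combinations of the $b_i$ remain linearly independent in the abelianisation — equivalently, $G/G'\St_G(n)$ surjects onto $G/B\cdot(G'\cap\text{something})$, whose Frattini quotient has dimension at least $3$. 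Concretely, $D$ is generated (as a normal subgroup, modulo $\gamma_3(G)$) only by the relations $\sum_{i=2}^r (j_i-j_1)\alpha_i=0$, so the quotient of $\langle b_1G',\dots,b_rG'\rangle$ by the image of $D$ still has dimension $2$; adding $aG'$ gives dimension $3$. This yields $\dim_{\mathbb F_p} G/\Phi(\overline G)\ge 3$.

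For the upper bound I would exhibit a generating pair-plus-one explicitly. Recall the defining data: all directed generators $b_i=b_1^{(j_i)}$ share the same non-constant symmetric defining vector $\mathbf e$, along distinct directed paths indexed by $j_1<\dots<j_r$. I expect $\overline G$ to be generated by $a$, $b_1$, and one further element, for instance $b_2$ (or a suitable product $b_1^{\,-1}b_2^{\,a^{j_2-j_1}}$-type element together with $b_1$). The mechanism is that, using self-similarity and the branch structure over $B$, once $a$ and two of the $b_i$ are available one recovers all the remaining $b_i$ modulo deep level stabilisers: conjugating $b_1$ and $b_2$ by powers of $a$ and taking sections produces, level by level, elements congruent to the other $b_i$ modulo $\St_{\overline G}(n)$ for every $n$. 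Passing to the closure turns these approximate identities into genuine equalities, so $\overline G=\overline{\langle a,b_1,b_2\rangle}$. Combined with the lower bound, the minimal number of generators is exactly $3$.

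The main obstacle is the upper bound — specifically, making precise the claim that $a$ together with two of the directed generators topologically generate $\overline G$. The subtlety is that $G$ itself is not generated by $a,b_1,b_2$ (the $b_i$ for $i\ge 3$ are genuinely needed as abstract generators, since $G/G'\cong C_p^{r+1}$), so the argument must genuinely use the topology: one shows that the subgroup $H=\overline{\langle a,b_1,b_2\rangle}\le\overline G$ surjects onto every finite quotient $G/\St_G(n)$. I would do this by induction on $n$, using that $G$ is regular branch over $B$ with $\St_G(5)\subseteq B$ (Proposition~\ref{prop:appB}) and super strongly fractal (Proposition~\ref{pro:super-strongly-fractal}): from $\varphi_u(\St_H(n))$ controlling $G$ at one layer and the branch inclusion $B\times\dots\times B\subseteq\psi(\St_B(1))$, one lifts generators of $G/\St_G(n)$ to $H\cdot\St_G(n+1)$. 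The delicate bookkeeping — matching the directed paths $j_1,\dots,j_r$ under the coordinate permutation induced by $a$, and checking that the two chosen generators' sections span the right space modulo $B$ — is where the real work lies; this is the step I would write out in full, guided by the computations behind Proposition~\ref{prop:appB} and \cite[Lem.~7.1]{FeGuTh26}.
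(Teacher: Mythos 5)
Your overall plan (lower bound via Frattini quotients of the congruence quotients, upper bound via topological generation by $a,b_1,b_2$) is the right shape, but both halves as written have gaps, and you have the relative difficulty of the two halves inverted. For the lower bound, the reduction to showing $\dim_{\mathbb F_p} G/G'\St_G(n)\ge 3$ for large $n$ is fine, but the inclusion you invoke, $B\subseteq\gamma_3(G)\St_G(n)\subseteq G'\St_G(n)$, goes the wrong way: it only shows that the image of $\St_G(n)$ in $G/G'\cong C_p^{\,r+1}$ contains the $(r-2)$-dimensional image of $D$, which gives the \emph{upper} bound $\dim G/G'\St_G(n)\le 3$. What the lower bound needs is that this image is no larger, i.e.\ $\St_G(n)\subseteq G'B$ — exactly the content hidden in your unexplained ``equivalently, $G/G'\St_G(n)$ surjects onto $G/B\cdot(G'\cap\text{something})$''. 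As written you assume the key point. It can be repaired in one line from the half of Proposition~\ref{prop:appB} you never use, namely $\St_G(5)\subseteq B$: for $n\ge 5$ this gives $G'\St_G(n)\subseteq G'B$, and your dimension count $\dim G/G'B=3$ then finishes. (The paper takes a different, self-contained route for this half: it shows directly that $a,b_1,b_2$ are a minimal generating set of $G/\St_G(3)$, by assuming $b_2\in\langle a,b_1\rangle$ modulo $\St_G(3)$, forcing $b_2\equiv b_1^{\,a^{1-j_2}}c$ with $c\in[\St(1),\St(1)]$, and deriving a contradiction by comparing coordinate products modulo $\gamma_3$, using the symmetry of $\mathbf e$.)

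For the upper bound, which you flag as ``the main obstacle'' and leave unexecuted, the heavy level-by-level induction over the branch structure is unnecessary, and the specific mechanism you describe is doubtful: sections of $a$-conjugates of $b_1,b_2$ (and of products thereof) only ever involve powers of $a$ and $b_1,b_2$ themselves, so ``taking sections'' cannot by itself produce the other $b_i$, even approximately; the congruences $b_k\equiv\ldots$ modulo congruence subgroups come from the relation elements generating $D$, not from sections. The short argument, which is the paper's, is: since $\overline G$ is pro-$p$, it suffices to show $b_k\in\langle a,b_1,b_2\rangle\,[G,G]\,\St_G(n)$ for every $n$ and every $k\ge 3$, and this follows immediately by choosing $\alpha$ with $\alpha(j_2-j_1)+(j_k-j_1)=0$ in the defining elements of $D$ and using $B\subseteq[G,G]\St_G(n)$ — i.e.\ precisely the inclusion you cited, but in the half of the proof where it actually does the work; the Frattini argument in $\overline G$ then yields $\overline G=\overline{\langle a,b_1,b_2\rangle}$. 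So the proposal is reparable, but only after swapping the roles of the two halves of Proposition~\ref{prop:appB} and replacing the proposed induction by the Frattini argument.
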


  \begin{proof}
    First we show that $\overline{G} = \langle a, b_1, b_2 \rangle$ so
    that the minimal number of generators of the pro-$p$ group
    $\overline{G}$ is at most~$3$.  For this it is enough to prove
    that, for each $n \in \mathbb{N}$,
    \[
      b_k \in \langle a , b_1, b_2\rangle \, [G,G] \St_G(n) \qquad
      \text{for $3 \le k \le r$.}
    \]
    Let $n \in \mathbb{N}$ and suppose that $k \in \{3, \dots, r\}$.
    We put $\alpha = -(j_k-j_1)/(j_2-j_1) \in \mathbb{F}_p$ so that
    Proposition~\ref{prop:appB} implies
    \[
      b_1^{-(\alpha+1)} \bigl( b_2^{\, \alpha} \bigr)^{a^{j_2-j_1}}
      b_k^{\, a^{j_k - j_1}} \equiv_{[G,G]} \big( b_1^{\, -1}
      b_2^{\, a^{j_2-j_1}} \big)^\alpha \big( b_1^{\, -1}
      b_{j_k}^{\, a^{j_k-j_1}} \big) \in B \subseteq  [G,G] \St_G(n)
    \]
    which in turn yields $b_k \in \langle a , b_1, b_2\rangle [G,G] \St_G(n)$.
    
    To finish the proof, it suffices to show that $a, b_1, b_2$
    constitute a minimal generating set for $G$ modulo the particular
    congruence subgroup~$\St_G(3)$.  For this purpose we introduce
    some auxiliary notation.  The finite factor groups
    $G_2 = G/\St_G(2)$ and $G_3 = G/\St_G(3)$ act on corresponding
    truncated trees.  We denote the image of $g$ in $G_2$ by $\dot g$
    and its image in $G_3$ by $\ddot g$.  Observe that $\psi$ induces
    an embedding
    \[
      \overline{\psi} \colon \St_{G_3}(1)\longrightarrow G_2 \times
      \overset{p}\dots \times G_2
    \]
    satisfying
  $\overline{\psi}(\ddot b_i) = (\dot a^{e_{j_i}}, \dots, \dot
    a^{e_{p-1}}, \dot b_i, \dot a^{e_1}, \dots, \dot a^{e_{j_i -1}})$
    for $i\in\{1,\dots,r\}$.  Without loss of generality we may
    assume that~$j_1=1$.

    We show that $\ddot a, \ddot b_1, \ddot b_2$ constitute a minimal
    generating set for~$G_3$.  It is easily seen that
    $\dot a, \dot b_1$ form a minimal generating set of~$G_2$, and
    thus it suffices to show that
    $\ddot b_2 \not\in \langle \ddot a, \ddot b_1 \rangle$.  For a
    contradiction, we suppose otherwise and conclude that
    $\ddot b_2 \in \St_{G_3}(1) = \langle \ddot b_1, \ddot b_1^{\,
      \ddot a}, \dots, \ddot b_1^{\, \ddot a^{p-1}} \rangle$ which in
    turn yields
    \begin{equation}
      \label{equ:rel-betw-b1-and-b2} 
      \ddot b_2 \equiv \ddot b_1^{\,i_0} (\ddot b_1^{\, \ddot a}
      )^{i_1} \cdots (\ddot b_1^{\, \ddot a^{p-1}})^{i_{p-1}} 
      \qquad \mathrm{modulo} \quad
        [\St_{G_3}(1),\St_{G_3}(1)],
    \end{equation}
    for suitable $i_0, \dots, i_{p-1} \in \{0,\dots, p-1\}$.  We
    claim that $i_{p-j_2+1}=1$ and $i_k=0$ for all other indices $k$
    so that there is $\ddot c \in [\St_{G_3}(1),\St_{G_3}(1)]$ such that
    \begin{equation}
      \label{equ:strong-rel-betw-b1-and-b2} 
      \ddot b_2 = \ddot b_1^{\, \ddot a^{1-j_2}} \ddot c.
    \end{equation}   
    Recall that $\mathbf{e} = (e_1,\dots,e_{p-1})$ denotes the
    non-constant, symmetric vector underlying~$G = G_{\mathbf{E}}$.
    Using the group homomorphism
    \[
       \St_{G_3}(1)/[\St_{G_3}(1),\St_{G_3}(1)]\longrightarrow
      G_2/[G_2,G_2] \times \overset{p}\dots \times
      G_2/[G_2,G_2]
    \]
    which is induced by $\overline{\psi}$ and the identity
    $\dot b_1 \equiv_{[G_2,G_2]} \dot b_1^{\, \dot a^{1-j_2}} = \dot
    b_2$ in~$G_2$, we conclude from \eqref{equ:rel-betw-b1-and-b2}
    that
    \begin{multline*}
      (\dot a^{e_{j_2+1}}, \dots, \dot a^{e_{p-1}}, \dot b_1, \dot
      a^{e_1}, \dots, \dot a^{e_{j_2}}) %
      \equiv \overline{\psi}(\ddot b_2) %
      \equiv \overline{\psi}\bigl( \ddot b_1^{\,i_0} (\ddot b_1^{\,
        \ddot a})^{i_1} \cdots
      (\ddot b_1^{\, \ddot a^{p-1}})^{i_{p-1}}\bigr) \\
      \equiv (\dot b_1^{\, i_{1}} \dot a^{*}, \dots, \dot b_1^{\,
        i_{p-1}} \dot a^{*}, \dot b_1^{\, i_0} \dot a^{*}) \qquad
      \mathrm{modulo} \quad [G_2,G_2] \times \overset{p}{\dots} \times [G_2,G_2],
    \end{multline*}
    for unspecified exponents~$*$.  As
    $G_2 = \langle \dot a, \dot b_1 \rangle$ and
    $G_2/[G_2,G_2] \cong C_p \times C_p$, we arrive
    at~\eqref{equ:strong-rel-betw-b1-and-b2}.

    Next we make use of the group homomorphism
    \[
      \St_{G_3}(1)/\gamma_3(\St_{G_3}(1)) \longrightarrow
      G_2/\gamma_3(G_2) \times \overset{p}\dots \times
      G_2/\gamma_3(G_2)
    \]
    which is induced by $\overline{\psi}$.  From
    $\ddot c = (\ddot b_1^{\, \ddot a^{1-j_2}})^{-1} \ddot b_2$ we see
    that, modulo
    $\gamma_3(G_2) \times \overset{p}{\dots} \times \gamma_3(G_2)$,
    \[
      \overline{\psi}(\ddot c) \equiv (1, \dots,1,\dot b_1^{\, -1}
      \dot b_2,1,\overset{j_2-1}\dots,1) \equiv (1, \dots,1,[\dot
      b_1, \dot a^{1-j_2}], 1,\overset{j_2-1}\dots,1).
    \]
    Since $[\dot b_1, \dot a^{1-j_2}] \notin \gamma_3(G_2)$, the
    product of the coordinates of $\overline{\psi}(\ddot c)$ does not
    lie in~$\gamma_3(G_2)$.  We show below that this is incompatible
    with $\ddot c \in [\St_{G_3}(1),\St_{G_3}(1)]$ and thus arrive at
    the desired contradiction.

    Indeed, the coordinates of $\overline{\psi}(g)$, for any
    $g \in [\St_{G_3}(1),\St_{G_3}(1)]$, lie in $[G_2,G_2]$ and their
    product modulo $\gamma_3(G_2)$ is independent of the particular
    ordering.  Since
    \[
      [\St_{G_3}(1),\St_{G_3}(1)] = \langle [\ddot b_1^{\, \ddot a^k},
      \ddot b_1^{\, \ddot a^\ell}] \mid 1 \le k < \ell \le p \rangle \,
      \gamma_3(\St_{G_3}(1)),
    \]
    it is enough to verify that the product of the coordinates of
    $\overline{\psi}([\ddot b_1^{\, \ddot a^k}, \ddot b_1^{\, \ddot
      a^\ell}])$ is in $\gamma_3(G_2)$, for any choice of
    $1 \le k < \ell \le p$.  As $\mathbf{e}$ is symmetric and thus
    $e_{p-\ell+k} = e_{\ell-k}$, this follows from
    \begin{align*}
      \overline{\psi}([\ddot b_1^{\, \ddot a^k}, \ddot b_1^{\, \ddot a^\ell}]) %
      & = (1, \overset{k-1}{\dots}, 1, [\dot b_1, \dot
        a^{e_{p-\ell+k}}], 1, \overset{l-k-1}{\dots}, 1,
        [\dot a^{e_{\ell-k}}, \dot b_1], 1, \overset{p-\ell}{\dots}, 1)\\ 
      & = (1, \dots, 1, [\dot b_1,  \dot a]^{e_{p-\ell+k}}, 1, \dots,1,
        [\dot a, \dot b_1]^{e_{\ell-k}}, 1, \dots, 1). \qedhere
    \end{align*}
  \end{proof}

In light of Proposition~\ref{prop:multi-GGS}, we can at times replace
the consideration of certain multi-EGS groups with the simpler
multi-GGS groups.  However, when we consider general branch multi-EGS
groups, it is more convenient to simplify the notation similar to the
way in Proposition~\ref{prop:appB}; we formalise this below.

\medskip

\noindent\textit{Notation.}  When we investigate the normal subgroup
structure of multi-EGS groups $G = G_\mathbf{E}$ in the following
sections, it is unnecessary to distinguish carefully between the
different generating families $\mathbf{b}^{(j)}$,
$j \in \{1, \dots, p\}$.  It will be convenient to denote the
directed generators of $G$ by $b_1, \dots, b_r$ so that, for
instance, \eqref{equ:def-multi-EGS} simplifies to
$G = \langle a, b_1, \dots, b_r \rangle$.



\section{An effective version of the congruence subgroup
  property}\label{sec:sandwich}

In this section, we prove Theorem~\ref{thm:sharper-CSP}.  We use the
notation introduced in the previous sections.  In particular, the
multi-EGS groups $G$ that we consider are subgroups of the
automorphism group $\Aut T$ of the $p$-adic tree $T$, for some
prime~$p \ge 3$.

\begin{proposition}\label{prop:branching}
  Let $G$ be a multi-EGS group and let $N\trianglelefteq G$ be a
  normal congruence subgroup.  Let $m\in \mathbb{N}_0$ be maximal
  subject to $N\subseteq \St_G(m)$. Then the following hold:
  \begin{enumerate}
  \item [\textup{(i)}] if $G$ is regular branch over $[G,G]$ then
    \[
      \gamma_3(G)\times\overset{p^{m+1}}\dots\times \gamma_3(G)
      \subseteq \psi_{m+1}\big(\St_{[N,G]}(m+1)\big);
    \]
  \item [\textup{(ii)}] if $G$ is regular branch over $\gamma_3(G)$
    but not over $[G,G]$ then
    \[
      \gamma_4(G)\times\overset{p^{m+1}}\dots\times
      \gamma_4(G)\subseteq \psi_{m+1}\big(\St_{[N,G]}(m+1)\big).
    \]
  \end{enumerate}
\end{proposition}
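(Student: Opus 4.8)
The plan is to reduce to the case $m=0$ by passing to a subtree, and then to read off the branching from a commutator computation powered by the regular branch structure. Throughout write $K=[G,G]$ in case~(i) and $K=\gamma_3(G)$ in case~(ii); in either case $G$ is regular branch over $K$, one has $L:=[K,G]$ equal to $\gamma_3(G)$, resp.\ $\gamma_4(G)$, and $\psi(K)$ is subdirect in $G\times\overset{p}{\dots}\times G$ by Proposition~\ref{prop:EGS-subdirect}. It suffices, for each $(m+1)$st-level vertex $w$ and each $c\in L$, to produce an element of $\St_{[N,G]}(m+1)$ whose $\psi_{m+1}$-image is $c$ in coordinate $w$ and trivial elsewhere. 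Now $G$ is super strongly fractal (Proposition~\ref{pro:super-strongly-fractal}), so $\varphi_u$ restricts $\St_G(m)$ onto $G$ for every $m$th-level vertex $u$; hence $\varphi_u(N)\trianglelefteq G$, and combining spherical transitivity with self-similarity one checks that $\varphi_u(N)$ is independent of $u$, while maximality of $m$ forces it not to lie in $\St_G(1)$. Fixing a witness $g\in N$ with $\varphi_u(g)\notin\St_G(1)$ and using that, by regular branching over $K$, any element of $\St_K(1)$ arises as $\varphi_u$ of an element of $\St_K(m)\le G$ supported on $T_u$, the whole analysis below may be carried out inside $T_u$; collecting the resulting elements over all $m$th-level vertices $u$ gives the general case. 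We may thus assume $m=0$. (The congruence-subgroup hypothesis is used only to ensure $m<\infty$.)

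\emph{The case $m=0$.} Here $N\trianglelefteq G$ with $N\not\subseteq\St_G(1)$, and the goal is $\psi\bigl(\St_{[N,G]}(1)\bigr)\supseteq L\times\overset{p}{\dots}\times L$. Since $G/\St_G(1)=\langle a\rangle\cong C_p$, after replacing an element of $N$ by a suitable power we may pick $g=as\in N$ with $s\in\St_G(1)$; put $\psi(s)=(s_1,\dots,s_p)$ and $w=s_2s_3\cdots s_ps_1\in G$. For $c\in K$, regular branching over $K$ yields $\kappa_c\in\St_K(1)\le G$ with $\psi(\kappa_c)=\bigl(c,\,c^{s_2},\,c^{s_2s_3},\,\dots,\,c^{s_2s_3\cdots s_p}\bigr)$, each coordinate lying in $K$, and a short calculation with $\psi$ shows that $[g,\kappa_c]\in[N,G]\cap\St_G(1)$ is supported on a single first-level vertex, where its section is $[w,c]\in[K,G]=L$. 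Likewise, commuting $[g,\psi^{-1}(c,1,\dots,1)]$ with $[g,\psi^{-1}(d,1,\dots,1)]^a$ (both defined as $c,d\in K$) produces elements of $[N,G]\cap\St_G(1)$ supported on a single first-level vertex whose section is an arbitrary element of $[K,K]$. Conjugating these singly-supported elements by elements of $\St_G(1)$ — which, by fractality, restrict onto all of $G$ at a prescribed first-level vertex — and using that $\St_{[N,G]}(1)=[N,G]\cap\St_G(1)$ is normal in~$G$, in particular $a$-invariant (so its $\psi$-image is invariant under cyclic permutation of coordinates), one arrives at $M\times\overset{p}{\dots}\times M\subseteq\psi\bigl(\St_{[N,G]}(1)\bigr)$, where $M=\bigl\langle\,[w,c]^{\,x}\mid c\in K,\ x\in G\,\bigr\rangle\,[K,K]\trianglelefteq G$.

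\emph{The main obstacle} is to show $M=L$, i.e.\ that these commutators exhaust $[K,G]$; this is where the argument does real work. Modulo $[K,K]$ the quotient $[K,G]/[K,K]$ encodes the action of $G$ on the abelian group $K/[K,K]$, while $M/[K,K]$ encodes only the action of the normal closure of the single element~$w$; as $G/G'\cong C_p^{\,r_G+1}$ has rank at least $2$, no single $w$ can suffice, so one must vary the witness. Since $G$ is just infinite and $N\not\subseteq\St_G(1)$, the normal subgroup $N\cap\St_G(1)$ is nontrivial and hence of finite index in $G$; replacing $g$ by $gt$ with $t$ ranging over $N\cap\St_G(1)$ replaces $w$ by twisted products with the sections of~$t$, and one verifies that the resulting elements $[w,c]$, together with $[K,K]$, generate $[K,G]$ modulo every $\gamma_n(G)$. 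Because $M$ contains $[K,K]$ — which is nontrivial, as otherwise the finite-index subgroup $K$ would be abelian, contradicting that $G$ is branch — the quotient $G/M$ is a finite $p$-group, so $\gamma_n(G)\subseteq M$ for some $n$, whence $M=M\gamma_n(G)=L\gamma_n(G)\supseteq L$; the reverse inclusion $M\subseteq[K,G]=L$ is immediate. This gives $M=L$ and completes the case $m=0$, and case~(ii) follows in the same way with $K=\gamma_3(G)$ and $L=\gamma_4(G)$.
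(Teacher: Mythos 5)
Your reduction to $m=0$ and the computation showing that $[g,\kappa_c]$ is supported at a single first-level vertex with section $[w,c]$, where $w=s_2\cdots s_ps_1$, are correct, as is the conclusion $M\times\overset{p}{\dots}\times M\subseteq\psi\bigl(\St_{[N,G]}(1)\bigr)$. The genuine gap is exactly at the step you yourself flag as the main obstacle: the assertion that, letting $t$ range over $N\cap\St_G(1)$, the commutators $[w_t,c]$ together with $[K,K]$ generate $[K,G]$ modulo every $\gamma_n(G)$ is given no verification, and it is false in general. Every witness you can ever produce is, modulo $G'$, the product of the $p$ first-level sections of an element of $\St_G(1)$ (this is unchanged by replacing $g$ by $g^jt$ or by conjugates). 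The section-product map $\St_G(1)\to G/G'$ is a homomorphism whose image is $P/G'=\langle a^{\sigma_1}b_1G',\dots,a^{\sigma_r}b_rG'\rangle$, where $\sigma_i$ denotes the coordinate sum of the defining vector of $b_i$; this is a subgroup of index $p$ in $G/G'$ which never contains $aG'$. Hence all witnesses lie in $P$ modulo $G'$, and since, modulo $\gamma_4(G)$, the element $[w_t,c]^x$ depends only on $w_t$ modulo $G'$ and on $c$ modulo $\gamma_3(G)$ (and $[K,K]\subseteq\gamma_4(G)$ in case (i)), one gets $M\gamma_4(G)\subseteq[G',P]\,\gamma_4(G)$. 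Whenever this is a proper subgroup of $\gamma_3(G)\gamma_4(G)=\gamma_3(G)$ --- for instance for the Gupta--Sidki $3$-group, where $\sigma_1=0$, so every witness is a power of $b$ modulo $G'$, while the lower central series computations (\cite{Vi98}, \cite{BaEiHa08}) show $[a,b,a]\notin\langle[a,b,b]\rangle\gamma_4(G)$ --- one has $M\subsetneq[K,G]=L$, and your closing chain $M=M\gamma_n(G)=L\gamma_n(G)\supseteq L$ collapses. Note that Proposition~\ref{prop:branching} covers torsion multi-EGS groups, so such examples are squarely in scope, and the same obstruction arises for non-torsion groups whenever the relevant graded piece has rank at least two. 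The paper avoids precisely this trap by descending one more level: using that $\psi(G')$ is subdirect in $G\times\overset{p}{\dots}\times G$ (Proposition~\ref{prop:EGS-subdirect}), it first shows that the sections at level $m+1$ of the elements $[h,g_i]\in[N,G]$, with $\varphi_v(g_i)=b_i$, represent \emph{every} class of $G/G'$, including that of $a$, and only then commutes these against elements of $K$ supported at a level-$(m+1)$ vertex. In your set-up the class of $a$ is structurally unreachable as a witness, so no amount of varying $t$ can repair the count.

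Two secondary points. The parenthetical claim that the congruence hypothesis serves only to ensure $m<\infty$ is off the mark: $m$ is finite for any non-trivial $N$, and the paper uses $\St_G(\ell)\subseteq N$ to reduce all the generation statements to finite nilpotent quotients. Correspondingly, your substitute step ``$G/M$ is a finite $p$-group, so $\gamma_n(G)\subseteq M$'' is unjustified: just infiniteness gives finiteness of $G/M$ once $M\neq 1$, but without the congruence subgroup property a finite quotient of $G$ need not be a $p$-group, nor nilpotent, so the inclusion $\gamma_n(G)\subseteq M$ does not follow as stated. These could likely be patched along the lines of the paper, but the witness-coverage gap above is fatal to the argument as it stands.
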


\begin{proof}
  We write $r=r_G$ and $G = \langle a, b_1, \dots, b_r \rangle$ with
  directed generators $b_1,\dots,b_r$ as explained at the
    end of Section~\ref{sec:multi-EGS}, and we use the short notation
  $G' = [G,G]$.  We consider the cases (i) and~(ii) in parallel and
  accordingly write $K$ to denote either $G'$ or~$\gamma_3(G)$.
  Without further comment, we use the fact that
  $K \subseteq \St_G(1)$.  We put
  $S = \psi_{m+1}(\St_{[N,G]}(m+1)) \le G \times
  \overset{p^{m+1}}\dots \times G$.

  Since $G$ is spherically transitive and regular branch,
  Proposition~\ref{pro:super-strongly-fractal} shows that it suffices
  to establish that $S$ contains a system of \emph{normal} generators
  for $[K,G] \times 1 \times \overset{p^{m+1}-1} \dots \times 1$
  viewed as a subgroup of
  $G \times 1 \times \overset{p^{m+1}-1} \dots \times 1$.  Since $N$
  is a congruence subgroup, there is $\ell\in\mathbb{N}$ with
  $\ell \ge m+1$ such that $\St_G(\ell)\subseteq N$, and
    this gives
    \[
      \psi_{\ell-m-1}^{\, -1}(K \times \overset{p^{\ell-m-1}} \dots
      \times K) \times 1 \times \overset{p^{m+1}-1}\dots \times 1
      \subseteq \psi_{m+1}(\St_G(\ell)) \subseteq
      N.
    \]
    Working modulo
    $\psi_{\ell-m-1}^{\, -1}([K,G] \times \overset{p^{\ell-m-1}} \dots
    \times [K,G]) \times 1 \times \overset{p^{m+1}-1}\dots \times 1
    \subseteq \psi_{m+1}([\St_G(\ell),G]) \subseteq S$, we are
  effectively dealing with finite nilpotent images of the groups
  involved.  Thus it suffices to establish that $S$ contains a system
  of elements of the form $([x,y],1,\overset{p^{m+1}-1}\dots,1)$,
  where $x$ runs through $K$ and $y$ runs through a generating system
  for~$G$ modulo $\gamma_2(G) = G'$.

  Let $u$ denote the leftmost vertex at level~$m+1$.  As $G$ is
  regular branch over~$K$, we have
  $K \times 1 \times \overset{p^{m+1}-1} \dots \times 1 \subseteq
  \psi_{m+1}(\St_G(m+1))$.  Since $\St_{[N,G]}(m+1)$ is normal in $G$,
  it thus suffices to show:
  \begin{list}{}{ \setlength{\itemsep}{0pt} \setlength{\parsep}{1pt}
      \setlength{\labelwidth}{1cm} \setlength{\leftmargin}{1cm}
      \setlength{\itemindent}{0cm}}
  \item[({$\ast$})] For every $z \in G$ there is an element
    $\hat z \in \St_{[N,G]}(m+1) $ such that
    $\varphi_u(\hat z) \equiv z$ modulo $G'$.
  \end{list}
  
  From the choice of $m$ and the fact that $N$ is normal in the
  spherically transitive group~$G$, it follows that $N = \St_N(m)$ and
  that $\varphi_v(N)\not\subseteq \St_G(1)$ for every vertex
  $v$ at level~$m$.  Let $v$ denote the leftmost vertex at level~$m$
  and pick an element $h \in N$ with
  \[
    \varphi_v(h) = ac \qquad \text{for some $c \in \St_G(1)$.}
  \]
  By Proposition~\ref{pro:super-strongly-fractal}, there are elements
  $g_1, \dots, g_r \in \St_G(m)$ such that
  \[
    \varphi_v(g_i)=b_i \qquad \text{for $i \in \{1,\dots,r\}$.}
  \]
  Clearly, $H = \langle h,g_1, \dots, g_r \rangle \subseteq \St_G(m)$
  projects under $\varphi_v$ onto
  $L = \langle ac, b_1, \dots, b_r \rangle$.  Set
  $M = \langle [h,g_1], \dots, [h,g_r] \rangle^H \subseteq [N,G] \cap
  [H,H]$ and observe that
  $\varphi_v(M) = \langle [ac,b_1], \dots, [ac,b_r] \rangle^L
  \subseteq [L,L]$.

  To conclude the proof of ($\ast$), we observe that, modulo
  $\St_G(1)' = [\St_G(1),\St_G(1)] \trianglelefteq G$,
  \[
    [b_i,b_j] \equiv 1, \quad [ac,b_i] = [a,b_i]^c \,[c,b_i] \equiv
    [a,b_i] \quad \text{and} \quad [a,b_i]^{ac} \equiv [a,b_i]^a
    \qquad \text{for $i,j \in \{1,\dots,r\}$.}
  \]
  From this we deduce that $\varphi_v(M) \St_G(1)' = G' \St_G(1)'$.
  Recall that $\psi(G')$ is subdirect in
  $G \times \overset{p}\dots \times G$, by
  Proposition~\ref{prop:EGS-subdirect}, and observe that
  $\psi(\St_G(1)') \subseteq G' \times \overset{p}\dots \times G'$.
  We conclude that for every $z \in G$ there exists an element
  $\hat z \in M \subseteq \St_{[N,G]}(m+1)$ such that
  $\psi(\varphi_v(\hat z))$ takes the form
  $(z, *,\overset{p-1}\dots,*)$ modulo
  $G' \times \overset{p}\dots \times G'$, where $*$ functions as a
  placeholder for unspecified elements of~$G$.  In other words,
  $\hat z$ satisfies $\varphi_u(\hat z) \equiv z$ modulo $G'$.
\end{proof}

For the next result, we recall Definition~\ref{def:rdot} which
provides the invariant $\dot{r}_G$.

\begin{corollary}\label{cor:n-step-general}
  Let $G$ be a multi-EGS group, and let $N \trianglelefteq G$ be such
  that $[N,G]$ is a congruence subgroup.  Let $m\in \mathbb{N}_0$ be
  maximal subject to $N\subseteq \St_G(m)$.  Then the
  following hold:
 \begin{enumerate}
 \item [\textup{(i)}] if $G$ is regular branch over $[G,G]$ then
   $\St_G(m + \dot{r}_G + 3)\subseteq [N,G]$;
 \item [\textup{(ii)}] if $G$ is regular branch over $\gamma_3(G)$ but
   not over $[G,G]$ then $\St_G(m+7)\subseteq [N,G]$.
 \end{enumerate}
\end{corollary}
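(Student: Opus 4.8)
The plan is to bootstrap from Proposition~\ref{prop:branching}, which already places a full product power of $\gamma_3(G)$ (resp.\ $\gamma_4(G)$) inside $\psi_{m+1}(\St_{[N,G]}(m+1))$. The remaining task is purely about the multi-EGS (resp.\ GGS-type) group $G$ itself: namely, to bound how many further levels down one must descend before a level stabiliser of $G$ sits inside $\gamma_3(G)$ (resp.\ $\gamma_4(G)$). Concretely, in case~(i) I would argue that $\St_G(\dot{r}_G+2) \subseteq \gamma_3(G)$, and in case~(ii) that $\St_G(6) \subseteq \gamma_4(G)$; combining either inclusion with Proposition~\ref{prop:branching} and the fact that $G$ is regular branch over $K$ (so that $\psi_{m+1}$ is injective on the relevant pieces and $K \times \overset{p^{m+1}}{\dots} \times K \subseteq \psi_{m+1}(\St_K(1)) \subseteq \psi_{m+1}(\St_G(m+1))$) yields $\St_G(m+1+\dot{r}_G+2) = \St_G(m+\dot{r}_G+3) \subseteq \St_{[N,G]}(m+1) \subseteq [N,G]$, and likewise $\St_G(m+7) \subseteq [N,G]$ in case~(ii).

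For case~(i), the key input is the invariance of $\dot{r}_G$: by Proposition~\ref{prop:multi-GGS}, $G$ and the multi-GGS group $H$ defined by the concatenated vector system $\dot{\mathbf{E}}$ have the same congruence completion, so $G\St_{\Aut T}(n) = H\St_{\Aut T}(n)$ for all $n$, and $H$ is generated by $a$ together with $\dot{r}_G$ directed automorphisms whose defining vectors are linearly independent. Applying Proposition~\ref{prop:st-in-[G,G]} to $H$ gives $\St_H(\dot{r}_G+1) \subseteq [H,H]$; pushing this through the equality of congruence completions and using $\St_G(2) \subseteq \gamma_3(G)$ (Proposition~\ref{prop:GGS-facts}(iii), or its multi-EGS analogue) should upgrade this to $\St_G(\dot{r}_G+2)\subseteq \gamma_3(G)$, after accounting for one extra level to pass from $[G,G]$ to $\gamma_3(G)$. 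For case~(ii), I would instead invoke Proposition~\ref{prop:appB}: there $G$ is regular branch over a subgroup $B$ with $\St_G(5)\subseteq B \subseteq \gamma_3(G)$, which combined with one further level (to reach $\gamma_4(G)$ from $\gamma_3(G)$, using $[\gamma_3(G) \times \dots \times \gamma_3(G),\text{stuff}]$ and the branch structure) yields $\St_G(6)\subseteq \gamma_4(G)$; alternatively a direct commutator computation on the standard generators, using that all directed generators share the single non-constant symmetric defining vector, gives the same bound.

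The main obstacle I anticipate is the careful level-counting in the last step — matching the "$+3$" and "$+7$" exactly rather than off by one. Specifically, one must be vigilant that Proposition~\ref{prop:branching} gives $\gamma_3(G)^{p^{m+1}} \subseteq \psi_{m+1}(\St_{[N,G]}(m+1))$ at level $m+1$, so an inclusion $\St_G(k)\subseteq\gamma_3(G)$ contributes $\St_G(m+1+k) \subseteq [N,G]$, forcing $k = \dot{r}_G+2$ to land on $m+\dot{r}_G+3$; and that the passage from the GGS/multi-GGS estimate $\St_H(\dot{r}_G+1)\subseteq [H,H]$ to a statement about $\gamma_3(G)$ genuinely costs exactly one level and no more. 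I would double-check these by testing against the GGS case ($\dot{r}_G = 1$), where the claim $\St_G(4)\subseteq[N,G]$ should drop out and is consistent with the sharper GGS bounds announced in Theorem~\ref{thm:sharper-CSP}, and against the Fabrykowski--Gupta case where an even better constant is expected — discrepancies there would flag a counting error in the general argument.
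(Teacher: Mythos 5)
Your proposal follows essentially the same route as the paper: reduce via Proposition~\ref{prop:branching} to the purely group-theoretic inclusions $\St_G(\dot{r}_G+2)\subseteq\gamma_3(G)$ and $\St_G(6)\subseteq\gamma_4(G)$ (the latter only modulo an arbitrary level stabiliser, harmless because $[N,G]$ is a congruence subgroup), handling (i) by passing to the multi-GGS group of Proposition~\ref{prop:multi-GGS} and applying Proposition~\ref{prop:st-in-[G,G]}, and (ii) via Proposition~\ref{prop:appB}, with the same level counting. The only minor imprecisions are in the one-level upgrades: the paper uses the inclusions $G'\times\cdots\times G'\subseteq\psi(\gamma_3(G))$ and $\gamma_3(G)\times\cdots\times\gamma_3(G)\subseteq\psi(\gamma_4(G))$ from \cite[Prop.~3.9]{ThUr21} rather than $\St_G(2)\subseteq\gamma_3(G)$, and Proposition~\ref{prop:appB} only gives $B\subseteq\gamma_3(G)\St_G(n)$ rather than $B\subseteq\gamma_3(G)$, but your plan of working modulo a level stabiliser contained in $[N,G]$ absorbs both points.
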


\begin{proof}
  (i) By Proposition~\ref{prop:branching}, it suffices to show that
  $\St_G(\dot{r}_G+2) \subseteq \gamma_3(G)$.  If $G$ has the
  congruence subgroup property, then Proposition~\ref{prop:EGS-CSP}
  yields $\dot{r}_G = r_G$ and with \cite[Prop.~3.9 and~4.2]{ThUr21}
  we obtain the desired inclusion.  Now suppose that $G$ does not have
  the congruence subgroup property.  Since $[N,G]$ is a congruence
  subgroup, there is $\ell \in \mathbb{N}$ such that
  $\St_G(\ell) \subseteq [N,G]$.  Working modulo $\St_G(\ell)$ and
  using Proposition~\ref{prop:multi-GGS}, we may suppose without loss
  of generality that $G$ is the corresponding multi-GGS group with the
  congruence subgroup property.  As before we obtain the desired
  inclusion.
  
  (ii) By Proposition~\ref{prop:branching}, it is enough to establish
  that $\St_G(6) \subseteq \gamma_4(G)$.  From
  \cite[Prop.~3.9]{ThUr21} we deduce that
  $\gamma_3(G)\times \overset{p}\dots\times \gamma_3(G)\subseteq
  \psi(\gamma_4(G))$, and since $[N,G]$ is a congruence subgroup, we
  conclude that it suffices to show:
  $\St_G(5) \subseteq \gamma_3(G) \St_G(\ell)$ for all
  $\ell \in \mathbb{N}$.  The latter holds by
  Proposition~\ref{prop:appB}.
\end{proof}

For GGS-groups $G$, the above results can be strengthened as follows,
where $G' = [G,G]$ and $G'' = [G',G']$ denote the first and the second
derived subgroups of $G$.

\begin{proposition}\label{prop:GGS}
  Let $G$ be a GGS-group and let $N\trianglelefteq G$ be a non-trivial
  normal subgroup.  Let $m\in \mathbb{N}_0$ be maximal subject to
  $N\subseteq \St_G(m)$.  Then the following hold:
 \begin{enumerate}
 \item [\textup{(i)}] if $G$ is regular branch over $G'$
   then
   \[
     G''\times\overset{p^m}\dots\times G'' \subseteq \psi_m([N,G])
   \]
   and consequently $\St_G(m+3)\subseteq [N,G]$;
 \item [\textup{(ii)}] if $G$ is regular branch over $\gamma_3(G)$ but
   not over $G'$, then
   \[
     \gamma_3(G)'\times\overset{p^m}\dots\times \gamma_3(G)'\subseteq
     \psi_m([N,G])
   \]
   and consequently $\St_G(m+4)\subseteq [N,G]$.
 \end{enumerate}
\end{proposition}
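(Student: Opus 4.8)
The plan is to follow the blueprint of the proof of Proposition~\ref{prop:branching}, but to work at level~$m$ rather than at level~$m+1$ and to replace the branching subgroup by its derived subgroup. Write $G = \langle a,b\rangle$, set $K = G'$ in case~(i) and $K = \gamma_3(G)$ in case~(ii), and note that in both cases $K \subseteq \St_G(1)$, the group $G$ is regular branch over~$K$, and the assertion to prove is $K' \times \overset{p^m}\dots \times K' \subseteq \psi_m([N,G])$, where $K' \in \{ G'', \gamma_3(G)' \}$. Since $G$ is just infinite and regular branch over~$K$, the normal subgroup $N$---and hence $[N,G]$---contains some level stabiliser $\St_G(\ell)$, so exactly as in the proof of Proposition~\ref{prop:branching} it suffices to work in a suitable finite nilpotent quotient of~$G$. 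Moreover, by spherical transitivity together with super strong fractality (Proposition~\ref{pro:super-strongly-fractal}), it is enough to produce inside $\psi_m([N,G])$ elements of the form $(y,1,\overset{p^m-1}\dots,1)$ with $y$ running through a set of normal generators of $K'$ in~$G$: conjugating such an element by a member of $\St_G(m)$ with prescribed section at the leftmost level-$m$ vertex, and permuting coordinates by spherically transitive elements, then fills all coordinates.

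Two observations make it possible to stay at level~$m$. First, since $K \subseteq \St_G(1)$, iterating the regular branch inclusion $K \times \overset{p}\dots \times K \subseteq \psi(\St_K(1))$ gives $K \times 1 \times \overset{p^m-1}\dots \times 1 \subseteq \psi_m(\St_G(m))$; hence for each $x \in K$ there is $y_x \in \St_G(m)$ with $\psi_m(y_x) = (x,1,\dots,1)$. Second, maximality of~$m$ and normality of $N$ in the spherically transitive group~$G$ yield $N = \St_N(m)$ and $\varphi_v(N) \not\subseteq \St_G(1)$ for every level-$m$ vertex~$v$; fixing the leftmost such vertex~$v$, pick $h \in N$ with $\varphi_v(h) = ac$ for some $c \in \St_G(1)$. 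Then $[h,y_x] \in [N,\St_G(m)] \subseteq [N,G]$ and, as $y_x$ has trivial sections away from~$v$,
\[
  \psi_m\bigl([h,y_x]\bigr) = \bigl( [ac,x],\, 1,\, \overset{p^m-1}\dots,\, 1 \bigr) \qquad \text{for all } x \in K .
\]
Consequently $\psi_m([N,G]) \supseteq V \times \overset{p^m}\dots \times V$, where $V = \langle\, [ac,x] \mid x \in K \,\rangle^G \trianglelefteq G$.

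The remaining, and most delicate, step is to show $K' \subseteq V$. Since $ac \equiv ab^{k} \pmod{G'}$ for some $k \in \mathbb{F}_p$, the commutators $[ac,x]$ only carry ``diagonal'' information; to recover all of $K'$ one additionally invokes, as in the proof of Proposition~\ref{prop:branching}, the normal subgroup $M \subseteq [N,G]$ built from an auxiliary $g_1 \in \St_G(m)$ with $\varphi_v(g_1) = b$, which satisfies $\varphi_v(M)\,\St_G(1)' = G'$. Commuting the elements of $M$ against the $y_x$ produces, for all $z,x \in G'$---and, in case~(ii), for $z \in \gamma_3(G)$---elements of $\psi_m([N,G])$ of the shape $\bigl([z,x]\,d,\,1,\dots,1\bigr)$ with $d$ lying in a deeper term of the lower central series of~$G$; since $[G',G'] = G''$ and $[\gamma_3(G),\gamma_3(G)] \subseteq \gamma_3(G)'$, a downward induction along the (now finite) lower central series absorbs the terms~$d$ and delivers $K' \times \overset{p^m}\dots \times K' \subseteq \psi_m([N,G])$. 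This induction is driven by $\St_G(2) \subseteq \gamma_3(G)$ (Proposition~\ref{prop:GGS-facts}(iii)) together with $\psi(G'') \supseteq \gamma_3(G) \times \overset{p}\dots \times \gamma_3(G)$---the latter following from $G' \times \overset{p}\dots \times G' \subseteq \psi(\St_{G'}(1))$ and the subdirectness of $\psi(G')$ in $G \times \dots \times G$ (Proposition~\ref{prop:EGS-subdirect})---and, in the non-torsion case, by the sharper identity $\St_G(2) = \St_G(1)'$ (Proposition~\ref{prop:GGS-rank-p}); the analogous statements for $\gamma_3(G)$ cover case~(ii). I expect making this approximation argument precise---keeping track of exactly which deeper terms arise---to be the main obstacle.

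Finally, the displayed consequences follow formally. From $K' \times \overset{p^m}\dots \times K' \subseteq \psi_m([N,G])$ and injectivity of $\psi_m$ on $\St_G(m)$ we get $\psi_m^{-1}(K' \times \overset{p^m}\dots \times K') \subseteq [N,G]$, so by self-similarity it suffices to verify the level-stabiliser inclusions $\St_G(3) \subseteq G''$ in case~(i) and $\St_G(4) \subseteq \gamma_3(G)'$ in case~(ii). The first follows from $\psi(\St_G(3)) \subseteq \St_G(2) \times \overset{p}\dots \times \St_G(2) \subseteq \gamma_3(G) \times \overset{p}\dots \times \gamma_3(G) \subseteq \psi(G'')$, using $\St_G(2) \subseteq \gamma_3(G)$ (Proposition~\ref{prop:GGS-facts}(iii)) and the inclusion for $\psi(G'')$ above; the second is analogous. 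Hence $\St_G(m+3) \subseteq [N,G]$ in case~(i) and $\St_G(m+4) \subseteq [N,G]$ in case~(ii).
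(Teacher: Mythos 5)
Your scaffolding matches the paper's: choosing $h\in N=\St_N(m)$ with $\varphi_v(h)=ac$, branch elements $y_x\in\St_G(m)$ with $\psi_m(y_x)=(x,1,\dots,1)$ for $x\in K$, forming $[h,y_x]$, and using super strong fractality, normality of $[N,G]$ and spherical transitivity to reduce to single-coordinate elements; the final deduction of $\St_G(m+3)$, resp.\ $\St_G(m+4)$, is also as in the paper. The genuine gap is exactly the step you flag as ``the main obstacle'', and your proposed mechanism does not close it. Let $Y=\{w\in G: (w,1,\overset{p^m-1}{\dots},1)\in\psi_m([N,G])\}$, a normal subgroup of $G$ containing a level stabiliser. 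The elements you extract from $M$ only have first coordinate $[zs,x]$ with $z\in G'$, $x\in K$ and an uncontrolled error $s=s(z)\in\St_G(1)'$; writing $[zs,x]=[z,x]\,[[z,x],s]\,[s,x]$, the errors lie in subgroups of type $[\St_G(1)',K]$, which are never produced as single-coordinate elements of $[N,G]$. Consequently, in the finite $p$-group $G/Y$ your data yields only $[G',K]\,Y=[\St_G(1)',K]\,Y$; since $\St_G(1)'\subseteq G'$ this reduces the problem to a subgroup of the same kind, and the intended ``downward induction along the lower central series'' has no base from which to absorb the errors --- nilpotency of $G/Y$ does not help with terms not already known to lie in $Y$. (The appeal to $\St_G(2)=\St_G(1)'$ would in any case be unavailable in the torsion case of (i).)

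The missing idea is the short nilpotent-generation argument the paper uses to remove the error terms \emph{before} commutating against the $y_x$. Set $D=\varphi_v\bigl([N,G]\cap\mathrm{st}_G(v)\bigr)\trianglelefteq G$. Conjugating $[h,g]$ (where $g\in\St_G(m)$ has $\varphi_v(g)=b$) by $\St_G(m)$ and using super strong fractality gives $\langle [ac,b]\rangle^G\subseteq D$, and $D$ contains a level stabiliser, so $G/D$ is a finite $p$-group generated by the images of $ac$ and $b$ (because $c\in\St_G(1)=\langle b\rangle^G$ forces $\langle ac,b\rangle G'=G$); these images commute, hence $G/D$ is abelian and $G'\subseteq D$ \emph{exactly}, with no $\St_G(1)'$-correction. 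Commutating preimages in $[N,G]\cap\mathrm{st}_G(v)$ against the $y_x$ then puts $([z,x],1,\dots,1)$ into $\psi_m([N,G])$ for all $z\in G'$, $x\in K$, and $[G',K]\supseteq K'$ settles both cases. Two smaller points: ``just infinite and regular branch'' does not give you $\St_G(\ell)\subseteq N$ --- you need the congruence subgroup property of these GGS-groups, which is what the paper invokes; and in case (ii) the inclusion $\St_G(4)\subseteq\gamma_3(G)'$ is not merely ``analogous'' to case (i): the paper imports $\gamma_4(G)\times\overset{p}{\dots}\times\gamma_4(G)\subseteq\psi(\gamma_3(G)')$ and $\gamma_3(G)\times\overset{p}{\dots}\times\gamma_3(G)\subseteq\psi(\gamma_4(G))$ from \cite{FeGaUr17}, and some such input has to be cited or proved.
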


\begin{proof}
  Suppose that $G$ is regular branch over $K$, where $K$ is either
  $G'$ or $\gamma_3(G)$, depending on which of the two cases we are
  in.  Then $G \ne \mathcal{G}$ and, by
 Proposition~\ref{prop:EGS-subdirect}, the group $G$ has the
  congruence subgroup property and $\psi(G')$ is subdirect in
  $G\times \overset{p}\dots\times G$.  In particular,
  $\St_G(\ell) \subseteq N$ for sufficiently large $\ell \in \mathbb{N}$ and
  we proceed similar to the proof of Proposition~\ref{prop:branching}.

  \smallskip
  
  (i) Let $v$ denote the leftmost vertex at level~$m$.  As in the
  proof of Proposition~\ref{prop:branching}, there are elements
  $h \in \St_N(m) = N$ and $g\in \St_G(m)$ such that
  $\varphi_v(h) = ac$ with $c\in \St_G(1)$ and $\varphi_v(g)=b$.  This
  yields $[ac,b] = \varphi_v([h,g]) \in \varphi_v(\St_{[N,G]}(m))$.
  Since $G$ is super strongly fractal and since we are effectively
  working with finite nilpotent images of the groups involved, we
  conclude that
  $G' = \langle [a,b] \rangle^G \subseteq \varphi_v([N,G])$.  Recall
  that
  $G' \times 1 \times \overset{p^m-1}\dots \times 1 \subseteq
  \psi_m(\St_G(m))$, because $G$ is regular branch over~$G'$.  Forming
  commutators and using once more that $G$ is super strongly fractal,
  we conclude that
  $G'' \times 1 \times \overset{p^m-1}\dots \times 1 \subseteq
  \psi_m([N,G])$.  Since $G$ is spherically transitive and
  $[N,G] \trianglelefteq G$, this gives
  $G'' \times \overset{p^m}\dots \times G'' \subseteq \psi_m([N,G])$.
  The final statement follows from $\St_G(2) \subseteq \gamma_3(G)$
  and
  $\gamma_3(G)\times \overset{p}\dots\times \gamma_3(G)\subseteq
  \psi(G'')$; see Proposition~\ref{prop:GGS-facts} and, for the second
  inclusion, use that
  $G' \times \overset{p}\dots \times G' \subseteq \psi(G')$ and that
  $\psi(G')$ is subdirect in $G \times \overset{p}\dots \times G$.

  \smallskip

  (ii) We proceed as above to conclude that
  $\gamma_3(G)' \times \overset{p^{m}}\dots \times \gamma_3(G)'
  \subseteq \psi_m([N,G])$. From \cite[Proof of Thm.~2.7]{FeGaUr17} we
  see that
  $\gamma_4(G)\times \overset{p}\dots \times \gamma_4(G)\subseteq
  \psi(\gamma_3(G)')$ and
  $\gamma_3(G) \times \overset{p}\dots \times \gamma_3(G)\subseteq
  \psi(\gamma_4(G))$.  Together with $\St_G(2) \subseteq \gamma_3(G)$,
  these inclusions yield the final statement.
\end{proof}

To prove Theorem~\ref{thm:sharper-CSP} we need to strengthen our
results even further in the special case that $G$ is a
Fabrykowski--Gupta group.  We recall the following basic properties of
such groups.

\begin{lemma}\label{lem:FG-facts}
  Let $G$ be the Fabrykowski--Gupta group for the prime $p \ge 3$.
  \begin{itemize}
  \item[\textup{(a)}] For $m \ge 2$, the $m$th derived subgroup
    $G^{(m)}$ equals $\St_G(m)$ and
    \[
      \psi_{m-1}(\St_G(m)) = G' \times \overset{p^{m-1}}{\dots}
      \times G'.
    \]
  \item[\textup{(b)}] Let $m \in \mathbb{N}$ and $g \in \St_G(m)$.
    Then for each vertex $v$ at level $m-1$ there are integers
    $\ell(1), \dots, \ell(p) \in \{0,1,\dots,p-1\}$ such that
    \begin{equation}\label{equ:coordinate-link}
      \varphi_v(g) \equiv 
      \psi^{-1}\bigl(( a^{\ell(1)}
      b^{\ell(2)}, a^{\ell(2)} b^{\ell(3)}, \dots,
      a^{\ell(i)} b^{\ell(i+1)}, \dots, a^{\ell(p)}
      b^{\ell(1)} )\bigr) \qquad \mathrm{modulo} \quad \St_G(2).
    \end{equation}
  \end{itemize}
\end{lemma}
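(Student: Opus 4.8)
The plan is to prove Lemma~\ref{lem:FG-facts} by setting up the recursive structure of the Fabrykowski--Gupta group $G = \langle a, b \rangle$ with $\psi(b) = (a, 1, \dots, 1, b)$ and then exploiting the known facts that $G$ is regular branch over $G'$, that $\St_G(2) = \St_G(1)' = \psi^{-1}(G' \times \overset{p}{\dots} \times G')$ (Proposition~\ref{prop:GGS-rank-p}), and that $G$ is super strongly fractal (Proposition~\ref{pro:super-strongly-fractal}).

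For part (a), I would argue by induction on $m \ge 2$. The base case $m = 2$ is Proposition~\ref{prop:GGS-rank-p}, which gives $\St_G(2) = \St_G(1)' = \psi^{-1}(G' \times \overset{p}{\dots} \times G')$; combined with $\St_G(2) \subseteq G'$ and the fact that $G'' = \St_G(1)'' \subseteq \psi^{-1}(G'' \times \cdots \times G'')$, together with $G'' \subseteq \St_G(2)$ (since $\St_G(2) = G'$ forces $G'' = \St_G(2)' \le \St_G(3) \le \St_G(2)$), one checks $\St_G(2) = G^{(2)}$. For the inductive step, using that $G$ is regular branch over $G'$ and that $\varphi_v$ is surjective from $\St_G(m-1)$ onto $G$ at level-$(m-1)$ vertices $v$ (super strong fractality), one pushes $G^{(m)} = [G^{(m-1)}, G^{(m-1)}]$ through $\psi$: the inductive hypothesis $\psi_{m-2}(\St_G(m-1)) = G' \times \cdots \times G'$ (over $p^{m-2}$ factors) yields $\psi_{m-1}(\St_G(m)) = \psi_{m-1}(G^{(m)})$ by taking commutators coordinatewise, giving $G'' \times \cdots \times G''$ over $p^{m-1}$ factors at level $m-1$; but $G'' \times 1 \times \cdots \times 1 \subseteq \psi(\St_{G'}(1))$ and regular branchness then lets one replace $G''$ by $G'$, i.e.\ $\psi_{m-1}(\St_G(m)) = G' \times \overset{p^{m-1}}{\dots} \times G'$. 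The equality $G^{(m)} = \St_G(m)$ then follows by comparing indices.

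For part (b), fix $m \in \mathbb{N}$ and $g \in \St_G(m)$, and fix a vertex $v$ at level $m-1$. By self-similarity, $\varphi_v(g) \in \St_G(1)$, and $\St_G(1) = \langle b, b^a, \dots, b^{a^{p-1}} \rangle$ modulo $\St_G(2)$; more precisely, $\St_G(1)/\St_G(2)$ is elementary abelian, generated by the images of the conjugates of $b$. I would write $\varphi_v(g) \equiv b^{\ell(1)} (b^a)^{\ell(2)} \cdots (b^{a^{p-1}})^{\ell(p)}$ modulo $\St_G(2)$ for suitable exponents, relabelling so that indices align cyclically, and then compute $\psi$ of this product. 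Since $\psi(b^{a^{i}})$ is the vector $\psi(b) = (a, 1, \dots, 1, b)$ cyclically shifted by $i$, the $i$-th coordinate of $\psi$ of the product is $a^{\ell(i)} b^{\ell(i+1)}$ (indices cyclic), which is exactly the claimed form, the equivalence being modulo $\psi(\St_G(2)) = G' \times \cdots \times G'$ — and one checks this matches "modulo $\St_G(2)$" in the statement after applying $\psi^{-1}$, since $\psi^{-1}(G' \times \cdots \times G') = \St_G(2)$.

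The main obstacle I anticipate is in part (a): carefully tracking the passage from $G''$ back to $G'$ in the inductive step, i.e.\ verifying that $\psi_{m-1}(\St_G(m))$ is genuinely all of $G' \times \overset{p^{m-1}}{\dots} \times G'$ and not merely contained in it, and that $G^{(m)}$ coincides with $\St_G(m)$ rather than being a proper subgroup. This requires combining regular branchness (which gives the inclusion $G' \times 1 \times \cdots \times 1 \subseteq \psi(\St_{G'}(1))$, hence after iterating $G' \times \overset{p^{m-1}}{\dots} \times G' \subseteq \psi_{m-1}(\St_{G^{(?)}}(m-1))$) with a dimension or index count using $|\St_G(m-1) : \St_G(m)|$ and the known structure. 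I would handle this by computing $\log_p |\St_G(m) : \St_G(m+1)|$ recursively from the formula in part (a) and checking it agrees with the index of $G^{(m+1)}$ in $G^{(m)}$, which forces equality at every level.
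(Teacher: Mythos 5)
Your treatment of part (b) is correct and is essentially the standard argument that lies behind the paper's citation (the paper itself does not prove the lemma; it refers to \cite{FeZu14} for (a) and to \cite{Pe24}, \cite{Si87} for (b)): since $\varphi_v(g)\in\St_G(1)=\langle b,b^a,\dots,b^{a^{p-1}}\rangle$ and $\St_G(1)/\St_G(2)=\St_G(1)/\St_G(1)'$ is elementary abelian, one writes $\varphi_v(g)\equiv b^{\ell(1)}(b^a)^{\ell(2)}\cdots (b^{a^{p-1}})^{\ell(p)}$ modulo $\St_G(2)$ and reads off, via $\psi$, that the $i$th coordinate is $a^{\ell(i)}b^{\ell(i+1)}$ modulo $G'$; since $\psi(\St_G(2))\subseteq G'\times\overset{p}{\dots}\times G'$ this is exactly \eqref{equ:coordinate-link}. (Only cosmetic: in the $p$th coordinate the two contributions arrive in the order $b^{\ell(1)}a^{\ell(p)}$, which is harmless modulo $G'$.)

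Part (a) is where your proposal has a genuine gap. The soft inclusions are the easy half: $G''\subseteq\St_G(1)'=\St_G(2)$ and $\psi_{m-1}(\St_G(m))=G'\times\overset{p^{m-1}}{\dots}\times G'$ follow from Proposition~\ref{prop:GGS-rank-p} together with regular branchness and self-similarity, as you say. The entire substance of the first assertion is the reverse inclusion $\St_G(2)\subseteq G''$, equivalently $G'\times\overset{p}{\dots}\times G'\subseteq\psi(G'')$, and your argument never establishes it. Your base case rests on false identities: $\St_G(2)=G'$ is wrong (one has $\lvert G':\St_G(2)\rvert=p^{\,p-1}$), and $G''=\St_G(1)''$ is wrong in the direction you need (a priori only $\St_G(1)''\subseteq G''$). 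In the inductive step, the phrase ``regular branchness then lets one replace $G''$ by $G'$'' is precisely the unproved inclusion again: from $\psi_{m-2}(\St_G(m-1))=G'\times\cdots\times G'$ you only get $\psi_{m-2}(G^{(m)})=G''\times\cdots\times G''$, and promoting $G''$ to $G'$ in each coordinate is the whole point. The proposed repair by index counting is circular, since the index formula you would count with is the formula being proved.

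Moreover, this missing inclusion cannot follow from the generic facts you invoke (Propositions~\ref{prop:GGS-rank-p}, \ref{prop:EGS-subdirect}, \ref{pro:super-strongly-fractal}): those hold for every non-torsion GGS-group that is regular branch over $G'$, and for such groups one only gets $\gamma_3(G)\times\overset{p}{\dots}\times\gamma_3(G)\subseteq\psi(G'')$ in general, which is exactly what the paper uses in Proposition~\ref{prop:GGS}. Whether $G'\times\cdots\times G'\subseteq\psi(G'')$ holds is sensitive to the defining vector and to $p$: modulo $\gamma_3(G)\times\cdots\times\gamma_3(G)$, every generator $[x,y]$ of $G''$ with $x,y\in G'$ is determined by the classes of $\psi(x),\psi(y)$ in $(G/G')^{\times p}$, and these classes span only a $(p-1)$-dimensional space (as $\lvert G':\St_G(2)\rvert=p^{\,p-1}$). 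For $p=3$ the resulting alternating pairing produces only powers of the diagonal element $([a,b],[a,b],[a,b])$ modulo $\gamma_3(G)\times\gamma_3(G)\times\gamma_3(G)$, so no argument of the kind you sketch can reach $\St_G(2)\subseteq G''$ there; indeed this computation shows the equality $G^{(2)}=\St_G(2)$ must be treated with care for $p=3$, and in any case it has to be taken from (and checked against the precise hypotheses of) the results in \cite{FeZu14} rather than derived formally. In short: (b) is fine; (a) as argued is not a proof, and the step you yourself flagged as the main obstacle is exactly where it breaks.
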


\noindent See \cite[Thm.~2.4(i) and~2.14, Lem.~3.4]{FeZu14} for part
(a) of Lemma~\ref{lem:FG-facts} and see \cite[Lem.~2.7]{Pe24} (also
\cite[2.2.2]{Si87}) for part (b).

\begin{proposition}\label{prop:2-jump}
  Let $G$ be the Fabrykowski--Gupta group for the prime $p \ge 3$.
  Let $N \trianglelefteq G$ be a non-trivial normal subgroup, and let
  $m \in \mathbb{N}$ be maximal subject to $N \subseteq
  \St_G(m)$. Then
  \[
    \gamma_3(G)\times \overset{p^m}{\dots} \times \gamma_3(G)
    \subseteq \psi_m ([N,G])
  \]
  and consequently $\St_G(m+2) \subseteq [N,G]$.
\end{proposition}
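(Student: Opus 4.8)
The plan is to mimic the structure of the proofs of Proposition~\ref{prop:branching}(i) and Proposition~\ref{prop:GGS}(i), but to exploit the very rigid recursion $\psi(b)=(a,1,\dots,1,b)$ of the Fabrykowski--Gupta group in order to save levels. The target is the inclusion $\gamma_3(G)\times\overset{p^m}{\dots}\times\gamma_3(G)\subseteq\psi_m([N,G])$; once this is established, the final statement $\St_G(m+2)\subseteq[N,G]$ follows from Lemma~\ref{lem:FG-facts}(a), which gives $\psi_2(\St_G(m+2))=G'\times\overset{p^m}{\dots}\times G'$ together with the fact that $G$ is regular branch over $G'$ and $G'\supseteq\gamma_3(G)\supseteq\St_G(2)$ (Proposition~\ref{prop:GGS-facts}(iii)), so that $\St_G(m+2)\subseteq\psi_m^{-1}(\gamma_3(G)\times\overset{p^m}{\dots}\times\gamma_3(G))$. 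Actually one wants $\psi_m(\St_G(m+2))\subseteq\gamma_3(G)\times\dots\times\gamma_3(G)$, which follows since the sections of an element of $\St_G(m+2)$ at level-$m$ vertices lie in $\St_G(2)\subseteq\gamma_3(G)$.

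First I would set up as in Proposition~\ref{prop:branching}: since $N\ne1$, spherical transitivity forces $N=\St_N(m)$ and $\varphi_v(N)\not\subseteq\St_G(1)$ for every level-$m$ vertex $v$; pick the leftmost such $v$ and $h\in N$ with $\varphi_v(h)=ac$ for some $c\in\St_G(1)$, and pick (using super strong fractality, Proposition~\ref{pro:super-strongly-fractal}) an element $g\in\St_G(m)$ with $\varphi_v(g)=b$. Then $[ac,b]=\varphi_v([h,g])\in\varphi_v(\St_{[N,G]}(m))$, and working modulo $\St_G(1)'$ as in Proposition~\ref{prop:branching} one gets $\varphi_v(\St_{[N,G]}(m))\St_G(1)'\supseteq\langle[a,b]\rangle^G\St_G(1)'=G'\St_G(1)'$. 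Because $\psi(G')$ is subdirect in $G\times\overset{p}{\dots}\times G$ (Proposition~\ref{prop:EGS-subdirect}) and $\psi(\St_G(1)')\subseteq G'\times\dots\times G'$, we can promote this to: $\psi_{m+1}(\St_{[N,G]}(m+1))$ contains, modulo $G'\times\dots\times G'$, every tuple of the form $(z,*,\dots,*)$ with $z\in G$ arbitrary — i.e. exactly statement $(\ast)$ of Proposition~\ref{prop:branching}, and hence $\gamma_3(G)\times\overset{p^{m+1}}{\dots}\times\gamma_3(G)\subseteq\psi_{m+1}(\St_{[N,G]}(m+1))$. So far this is just Proposition~\ref{prop:branching}(i) and gives only $\St_G(m+3)\subseteq[N,G]$; the point is to gain one level.

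The saving comes from the following refinement: instead of only recovering $G'\St_G(1)'$ inside $\varphi_v(\St_{[N,G]}(m))$, I claim one can recover $G'$ itself, or at least enough of $\gamma_3(G)$ directly at level $m$ rather than at level $m+1$. Here is where Lemma~\ref{lem:FG-facts}(b) enters. The commutator $[ac,b]$ and its conjugates, computed via $\psi$, have coordinates of the tightly-coupled ``$a^{\ell(i)}b^{\ell(i+1)}$'' shape of \eqref{equ:coordinate-link}; one analyses $\psi([ac,b])=\psi(ac)^{-1}\psi(b)^{-1}\psi(ac)\psi(b)$ explicitly using $\psi(b)=(a,1,\dots,1,b)$ and the fact that $a$ permutes the $p$ coordinates cyclically, and tracks, modulo $G'\times\dots\times G'$, which tuples $(\,z_1,\dots,z_p\,)$ with $z_1\cdots z_p\in G'$ are attainable from $\langle[ac,b_i]\rangle^{\langle h,g_1,\dots,g_r\rangle}$. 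The key structural input is that $\psi^{-1}$ of such product-one tuples, combined with $\psi(G'')\supseteq\gamma_3(G)\times\dots\times\gamma_3(G)$ (Proposition~\ref{prop:GGS-facts}, as used in Proposition~\ref{prop:GGS}(i)) and the regular-branch inclusion $\gamma_3(G)\times1\times\dots\times1\subseteq\psi(\St_{\gamma_3(G)}(1))$, lets us land $\gamma_3(G)$ in the first coordinate already at level $m$: that is, $\varphi_v([N,G])\supseteq\gamma_3(G)$, and then spherical transitivity plus normality of $[N,G]$ upgrade this to $\gamma_3(G)\times\overset{p^m}{\dots}\times\gamma_3(G)\subseteq\psi_m([N,G])$.

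The main obstacle will be precisely this coordinate bookkeeping in the third paragraph: showing that the special Fabrykowski--Gupta recursion really does let one descend only $2$ levels (rather than $3$) from $N$ to $[N,G]$, i.e. verifying that the commutators $[ac,b]^{g}$ generate, modulo the appropriate term of the derived/lower central series and after applying $\psi$, all of $G'$ (equivalently all of $\gamma_3(G)$ after one more commutator step) in a single coordinate. Concretely I expect to need: (i) the identity $\psi_2(\St_G(m+2))=G'\times\overset{p^m}{\dots}\times G'$ from Lemma~\ref{lem:FG-facts}(a) to translate the level count into the target inclusion; (ii) the ``product of coordinates'' invariant argument (as in the proof of Proposition~\ref{pro:min-numb-gen-congr}) to control which tuples are reachable; and (iii) Proposition~\ref{prop:GGS-rank-p}, namely $\St_G(2)=\psi^{-1}(G'\times\dots\times G')$, to see that ``recovering $\gamma_3(G)$ in one coordinate at level $m$'' is genuinely the same as ``$\St_G(m+2)\subseteq[N,G]$''. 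Everything else is a direct transcription of the arguments already given for the general multi-EGS and GGS cases.
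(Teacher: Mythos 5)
Your reduction of the final inclusion to $\gamma_3(G)\times\overset{p^m}{\dots}\times\gamma_3(G)\subseteq\psi_m([N,G])$ is fine, and your second paragraph is a faithful repetition of Proposition~\ref{prop:branching}; but the step on which everything hinges is the bridge at the end of your third paragraph, and it does not work. From ``$\varphi_v([N,G])\supseteq\gamma_3(G)$'' you pass to ``$\gamma_3(G)\times\overset{p^m}{\dots}\times\gamma_3(G)\subseteq\psi_m([N,G])$'' by invoking spherical transitivity and normality of $[N,G]$. Those only move the distinguished coordinate around; they do not produce elements of $[N,G]$ whose level-$m$ sections are trivial away from~$v$, so a statement about the \emph{image of the projection} $\varphi_v$ cannot be upgraded for free to a statement about containing the corresponding \emph{direct factor}. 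Indeed, the proof of Proposition~\ref{prop:GGS}(i) already establishes the stronger fact $\varphi_v([N,G])\supseteq G'$; if your upgrade were legitimate it would instantly give $G'\times\overset{p^m}{\dots}\times G'\subseteq\psi_m([N,G])$, i.e.\ $\St_G(m+1)\subseteq[N,G]$, a conclusion far beyond anything you or the paper justify and one that would make the entire case analysis of Proposition~\ref{prop:2-jump} superfluous. The only mechanism your cited tools provide for converting a projection statement into a one-coordinate statement is commutation against $G'\times 1\times\overset{p^m-1}{\dots}\times 1\subseteq\psi_m(\St_G(m))$, and that costs exactly one commutator: from $\varphi_v([N,G])\supseteq G'$ one lands only $G''\times1\times\dots\times1$ (this is what Proposition~\ref{prop:GGS}(i) does), and from your weaker input $\gamma_3(G)$ only $[\gamma_3(G),G']\times1\times\dots\times1$ --- precisely the loss of a level you set out to avoid. (Combining Proposition~\ref{prop:GGS}(i) with $G''=\St_G(2)$ from Lemma~\ref{lem:FG-facts}(a) does recover the final inclusion $\St_G(m+2)\subseteq[N,G]$, but not the asserted containment of $\gamma_3(G)\times\dots\times\gamma_3(G)$, which is strictly stronger since $\St_G(2)\subsetneq\gamma_3(G)$ for $p\ge 3$.)

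The actual saving in the paper comes from a mechanism that is absent from your outline. One works directly with an element $x\in N\smallsetminus\St_N(m+1)$, normalises $\psi_m(x)=(ac_1,a^kc_2,*,\dots,*)$, and uses Lemma~\ref{lem:FG-facts}(b) to constrain its level-$m$ sections; this forces a four-case analysis according to whether $c_1,c_2$ lie in $G'$, the delicate case being $\psi_m(x)\equiv(ab,\overset{p}{\dots},ab,\dots)$ in blocks. In each case one constructs elements of $\St_G(m)$ supported only on the first block of $p$ coordinates whose first coordinates are \emph{exactly} suitable generators $z_1,z_2$ of $G$ modulo $G'$ --- this exploits identities special to the defining vector $(1,0,\dots,0)$, such as $\psi([b,a]^{a^{-1}})=(a,1,\dots,1,b^{-1},a^{-1}b)$ and $\psi([a,b]^a)=(b,b^{-1}a,a^{-1},1,\dots,1)$ --- together with an auxiliary element $g$ fixing the level-$(m-1)$ vertex $v$ with $\psi(\varphi_v(g))=(b,a,1,\dots,1)$ (in the hardest case $g=b^a g_1\cdots g_{m-1}$). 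The resulting triple commutators $[x,g,h_1],[x,g,h_2]\in[N,G]$ are congruent to $([z_1,z_2,z_1],1,\dots,1)$ and $([z_1,z_2,z_2],1,\dots,1)$ modulo $\gamma_4(G)\times\dots\times\gamma_4(G)$, and these generate $\gamma_3(G)$ modulo $\gamma_4(G)$ in a single coordinate; the congruence subgroup property and a standard nilpotent-image argument then give the product containment. The ``coordinate bookkeeping'' you defer to future work is therefore not a technicality but the entire content of the proof, and the shortcut you propose in its place is invalid.
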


\begin{proof} 
  By Proposition~\ref{prop:EGS-CSP}, the group $G$ has the congruence
  subgroup property so that $[N,G]$ contains $\St_G(n)$ for some
  $n \in \mathbb{N}$ and $G/[N,G]$ is a finite $p$-group.  We observe
  that, for $m=0$, the result is straightforward: if
  $N \not\subseteq \St_G(1)$, then
  $\St_G(2) \subseteq \gamma_3(G) \subseteq G' \subseteq [N,G]$;
  compare with Proposition~\ref{prop:GGS-facts}.
  
  Now let $m \ge 1$.  Proposition~\ref{prop:GGS-facts} shows that
  \[
    \St_G(m+2) = \psi_m^{-1} \bigl( \St_G(2) \times
    \overset{p^m}{\dots} \times \St_G(2) \bigr) \subseteq \psi_m^{-1}
    \bigl( \gamma_3(G)\times \overset{p^m}{\dots} \times \gamma_3(G)
    \bigr).
  \]
  Hence the second assertion is a direct consequence of the first one.
  Furthermore, since $G$ is spherically transitive, it suffices to
  establish that
  \begin{equation}\label{equ:gamma3-included}
    \gamma_3(G)\times 1 \times \overset{p^m-1}{\dots} \times 1
    \subseteq \psi_m([N,G]),
  \end{equation}
  and because $G/[N,G]$ is nilpotent, there is no harm in working modulo
  \[
    \gamma_4(G)\times \overset{p^m}{\dots} \times \gamma_4(G)
    \trianglelefteq \psi_m(\St_G(m)).
  \]
  Below we shall locate elements $z_1, z_2 \in G$ satisfying
  $G = \langle z_1, z_2 \rangle G'$ and such that $\psi_m([N,G])$
  contains elements of the form
  \[
    ([z_1, z_2, z_1], 1, \overset{p^m-1}{\dots}, 1) \quad \text{and}
    \quad ([z_1, z_2, z_2], 1, \overset{p^m-1}{\dots}, 1) \qquad
    \mathrm{modulo} \quad \gamma_4(G)\times \overset{p^m}{\dots}
    \times \gamma_4(G).
  \]
  As
  $\gamma_3(G) = \langle [z_1,z_2,z_1], [z_1,z_2,z_2]
  \rangle\gamma_4(G)$, this suffices to
  deduce~\eqref{equ:gamma3-included}.

  \medskip
  
  Recall that $G = \langle a, b \rangle$, where $a$ denotes the rooted
  automorphism and the directed automorphism $b$ is given recursively
  by $\psi(b) = (a,1,\overset{p-2}{\dots},1,b)$.  Let
  $x \in N \smallsetminus \St_N(m+1)$; in particular,
  $x \in \St_G(m)$.  Since $G$ permutes the vertices at level $m$
  transitively, we may replace $x$ by $(x^g)^j$, for suitable
  $g \in G$ and $j \in \{0,1,\dots,p-1\}$, to ensure that $x$ takes
  the form
  \[
    \psi_m(x) = (a c_1, a^k c_2, *, \overset{p^m-2}{\dots} ,*) \qquad
    \text{with $c_1,c_2 \in \St_G(1)$ and $k \in \{0,1,\dots,p-1\}$,}
  \]
  where $*$ is used as a generic placeholder for elements of~$G$ whose
  specific nature is irrelevant for our argument.

  We proceed by case distinction, according to whether $c_1$, $c_2$
  belong to $G'$ or not.  The following notation is used across the
  cases: $v$ denotes the leftmost vertex at level~$m-1$ and, because
  $G$ is super strongly fractal
  (see Proposition~\ref{pro:super-strongly-fractal}), we can fix an
  element $\tilde a \in \St_G(m-1)$ such that
  $\varphi_v(\tilde a) = a^{-1}$.

  \medskip
  
  \noindent \textsl{Case 1:} $c_1, c_2 \in G'$.  From
  \eqref{equ:coordinate-link} we deduce that $k=0$, and hence
  \[
    \psi_m(x) \equiv (a,1, *, \overset{p^m-2}{\dots} ,*) \qquad
    \mathrm{modulo} \quad G' \times \overset{p^m}{\dots} \times G'.
  \]
  Since $G$ is super strongly fractal we can choose $g \in \St_G(m-1)$
  such that
  $$\varphi_v(g) = b^a = \psi^{-1}((b,a,1,\overset{p-2}{\dots},1)).
  $$
  Then
  \[
    \psi_m([x,g]) \equiv \bigl( [a,b], 1, \overset{p-1}{\dots}, 1 \,{,}\,
    *, \overset{p^m-p}{\dots}, * \bigr) \qquad \mathrm{modulo} \quad
    \gamma_3(G)\times \overset{p^m}{\dots} \times \gamma_3(G).
  \]
  Observe that
  \[
  \psi \bigl( [b,a]^{a^{-1}} \bigr) =
    (a,1,\overset{p-3}{\dots},1,b^{-1},a^{-1}b) \qquad \text{and}
    \qquad \psi \bigl( [a,b]^a \bigr) =
    (b,b^{-1}a,a^{-1},1,\overset{p-3}{\dots},1).
  \]
  From
  $G' \times 1 \times \overset{p^{m-1}-1}{\dots} \times 1 \subseteq
  \psi_{m-1}(\St_G(m-1))$, we conclude that there are elements
  $h_1,h_2 \in \St_G(m-1)$ such that
  $\psi_{m-1}(h_1) = ([b,a]^{a^{-1}},1,\overset{p^{m-1}-1}{\dots},1)$
  and $\psi_{m-1}(h_2) = ([a,b]^a,1,\overset{p^{m-1}-1}{\dots},1)$,
  thus $h_1,h_2 \in \St_G(m)$ and
  \[
    \psi_m (h_1) = (a,*,\overset{p-1}{\dots},* \,{,}\,
    1,\overset{p^m-p}{\dots},1) \qquad \text{and} \qquad \psi_m(h_2)
    = (b,*,\overset{p-1}{\dots},* \,{,}\,
    1,\overset{p^m-p}{\dots},1).
  \]
  Modulo
  $\gamma_4(G) \times \overset{p^m}{\dots} \times \gamma_4(G)$, this
  yields
  \begin{align*}
    ([a,b,a], 1,\overset{p^m-1}{\dots},1) %
    & \equiv \psi_m([x,g,h_1]) \in \psi_m([N,G]), \\
    ([a,b,b], 1,\overset{p^m-1}{\dots},1) %
    & \equiv \psi_m([x,g,h_2]) \in \psi_m([N,G]).
  \end{align*}   
  This yields \eqref{equ:gamma3-included}, as explained at the
  beginning of the proof.
  
  \medskip

  \noindent \textsl{Case 2:} $c_1 \not\in G'$ and $c_2 \in G'$.  From
  \eqref{equ:coordinate-link} we deduce that $k \ne 0$ and
  \[
    \psi_m(x) \equiv (a c_1,a^k, *, \overset{p^m-2}{\dots} ,*) \qquad
    \mathrm{modulo} \quad G' \times \overset{p^m}{\dots} \times G'.
  \]
  Pick $j \in \{1,2,\dots,p-1\}$ such that $jk \equiv_p -1$.  Recall
  that $\tilde a \in \St_G(m-1)$ is such that
  $\varphi_v(\tilde a) = a^{-1}$.  Then $y := x(x^{\tilde a})^j \in N$
  satisfies
  \[
    \psi_m(y) \equiv \bigl( c_1, *, \overset{p^m-1}{\dots}, * \bigr)
    \qquad \mathrm{modulo} \quad G' \times \overset{p^m}{\dots} \times
    G'.
  \]
  From
  $G' \times 1 \times \overset{p^m-1}{\dots} \times 1 \subseteq
  \psi_m(\St_G(m))$, we conclude that there exists
  $h \in \St_G(m)$ such that
  $\psi_m(h) = ([ac_1,c_1],1,\overset{p^m-1}{\dots},1)$.  Modulo
  $\gamma_4(G) \times 1 \times \overset{p^m-1}{\dots} \times 1$, this yields
  \begin{align*}
    ([ac_1,c_1,ac_1], 1,\overset{p^m-1}{\dots},1) %
    & \equiv \psi_m([h,x]) \in \psi_m([N,G]), \\
    ([ac_1,c_1,c_1], 1,\overset{p^m-1}{\dots},1) %
    & \equiv \psi_m([h,y]) \in \psi_m([N,G]).
  \end{align*}   
  Using $G= \langle ac_1, c_1 \rangle G'$, we deduce
  \eqref{equ:gamma3-included}, as before.

  \medskip

  \noindent \textsl{Case 3:} $c_1 \in G'$ and $c_2 \not\in G'$.  This
  case is rather similar to the previous one, and note that  from
  \eqref{equ:coordinate-link} we again have $k=0$.  Putting
  $y  :=  x^{\tilde a} \in N$, we see that
  \[
    \psi_m(x) \equiv (a,   c_2, *, \overset{p^m-2}{\dots} ,*) \quad
    \text{and} \quad \psi_m(y) \equiv \bigl( c_2, *,
    \overset{p^m-1}{\dots}, * \bigr) \qquad \mathrm{modulo} \quad G'
    \times \overset{p^m}{\dots} \times G'.
  \]
  From
  $G' \times 1 \times \overset{p^m-1}{\dots} \times 1 \subseteq
  \psi_m(\St_G(m))$, we conclude that there exists $h \in \St_G(m)$
  such that $\psi_m(h) = ([a,c_2],1,\overset{p^m-1}{\dots},1)$.
  Modulo
  $\gamma_4(G) \times 1 \times \overset{p^m-1}{\dots} \times 1$, this
  yields
  \begin{align*}
    ([a,c_2,a], 1,\overset{p^m-1}{\dots},1) 
    & \equiv \psi_m([h,x]) \in \psi_m([N,G]), \\
    ([a,c_2,c_2], 1,\overset{p^m-1}{\dots},1) %
    & \equiv \psi_m([h,y]) \in \psi_m([N,G]).
  \end{align*}   
  Using $G= \langle a, c_2 \rangle G'$, we deduce
  \eqref{equ:gamma3-included}, as before.
  
  \medskip

  \noindent \textsl{Case 4:} $c_1, c_2 \not\in G'$.  In this
  situation, \eqref{equ:coordinate-link} yields
  \begin{align*}
    \psi(\varphi_v(x)) %
    &\equiv \bigl( a^{\ell(1)} b^{\ell(2)}, a^{\ell(2)}
      b^{\ell(3)},a^{\ell(3)} b^{\ell(4)},
      \dots, a^{\ell(p-1)} b^{\ell(p)}, a^{\ell(p)} b^{\ell(1)} \bigr) \\
    &\equiv \bigl( a b^k, a^k b^{\ell(3)},a^{\ell(3)} b^{\ell(4)}, \dots,
      a^{\ell(p-1)} b^{\ell(p)}, a^{\ell(p)} b \bigr) \quad \mathrm{modulo} \quad
      G' \times \overset{p}{\dots} \times G',
  \end{align*}
  where $\ell(1)=1$, $\ell(2)=k \ne 0$ and
  $\ell(3), \dots, \ell(p) \in \{0,1,\dots,p-1\}$ are determined by
  $x$. Recall that $\tilde a \in \St_G(m-1)$ is such that
  $\varphi_v(\tilde a) = a^{-1}$.  Considering elements of the form
  $(x^{-j} x^{\tilde a})^{{\tilde a}^i}$ for
  $i ,j\in\{0,1,\dots, p-1\}$, we see that we may return to Case~1 or
  Case~3 (i.e. in one component the total $b$-exponent is zero but the
  total $a$-exponent is non-zero), unless
  $\ell(i) \equiv_p k \cdot \ell(i-1)$ for $i \in\{2,\dots ,p\}$ and
  furthermore $1 =\ell(1) \equiv_p k \cdot \ell(p) = k^p \equiv_p k$.
  The latter implies $k= \ell(1) = \dots = \ell(p) =1$ so that we are
  reduced to the situation
  \[
    \psi_m(x) \equiv \bigl( ab, \overset{p}{\dots}, ab \,{,}\, *,
    \overset{p^m-p}{\dots},* \bigr) \qquad \mathrm{modulo} \quad G'
    \times \overset{p^m}{\dots} \times G'.
  \]
  Furthermore, from considering all other vertices at level $m-1$, we
  may also assume that, modulo
  $G' \times \overset{p^m}{\dots} \times G'$,
  \begin{equation}\label{eq:p-blocks}
    \psi_m(x) \equiv \bigl( ab, \overset{p}{\dots}, ab \;{,}\;
    (ab)^{k_2}, \overset{p}{\dots}, (ab)^{k_2} \;{,}\; \dots \;{,}\;
    (ab)^{k_{p^{m-1}}}, \overset{p}{\dots}, (ab)^{k_{p^{m-1}}} \bigr) 
  \end{equation}
  for some $k_2,\dots, k_{p^{m-1}}\in\{0,1,\dots,p-1\}$.
  
  For $n \in\{1,\dots, m-1\}$, the inclusion
  $G' \times 1 \times \overset{p^n-1}{\dots} \times 1 \subseteq
  \psi_n(\St_G(n))$ allows us to pick $g_n \in \St_G(n)$ satisfying
  \[
    \psi_n(g_n) = ([b,a],1,\overset{p^n-1}{\dots},1).
  \]
  We set $g = b^a g_1 \cdots g_{m-1}$ and observe from
  \[
   \psi(b^a g_1) = (b^a,a,1,\overset{p-2}{\dots},1) \qquad
    \text{and} \qquad \psi_{n-1}(g_n) =
    (g_1,1,\overset{p^{n-1}-1}{\dots},1) \quad \text{for
      $n \in\{2,\dots, m-1\}$}
  \]
  that $g$ fixes the vertex $v$ and
  \[
    \psi(\varphi_v(g)) = \psi(b^a) = (b,a,1,\overset{p-2}{\dots},1).
  \]

  For $m=1$, we have $g = b^a$ and the congruence \eqref{eq:p-blocks}
  yields
  \[
    \psi([x,g]) = ([a,b],[b,a],1,\overset{p-2}\dots,1) \qquad
    \mathrm{modulo} \quad \gamma_3(G) \times \overset{p}{\dots}
    \times \gamma_3(G).
  \]
  Moreover $h_1=b$ and $h_2=b^a$ satisfy $\psi(h_1) = (a,1,\dots,1,b)$
  and $\psi(h_2) = (b,a,1,\dots,1)$.  Modulo
  $\gamma_4(G)\times \overset{p}{\dots} \times \gamma_4(G)$, we
  obtain
  \begin{align*}
    (    [a,b,a], 1,\overset{p-1}{\dots},1) %
    & \equiv \psi([ [x,g],h_1]) \in \psi([N,G]), \\
    ([a,b,b], 1,\overset{p-1}{\dots},1) %
    & \equiv \psi_m([ [g,x]^{a^{-1}},h_2]) \in \psi([N,G]).
  \end{align*}   
  This yields \eqref{equ:gamma3-included}, as before.

  Now suppose that $m \ge 2$.  Observe that at every positive level,
  the element $g$ has exactly one non-trivial label being~$a$.
  Furthermore, $g$ fixes the vertex $u$ just above $v$, viz.\ the
  leftmost vertex at level $m-2$, has label $1$ at $u$ and satisfies
  $\psi(\varphi_u(g)) = (b^a,a,1,\overset{p-2}\dots,1)$.  Taking into
  account this information about $g$ together with the form of $x$
  indicated in~\eqref{eq:p-blocks}, we obtain
  \[
    \psi_m([x,g]) \equiv
    ([a,b],[b,a],1,
    \overset{p^2-2}{\dots},1\,{,}\, *,\overset{p^m-p^2}\dots,*) \quad
    \text{mod
      $\gamma_3(G) \times \overset{p^2}{\dots} \times \gamma_3(G)
      \times G' \times \overset{p^m-p^2}{\dots} \times G'$.}
  \]
  As seen in Case~1 there are elements $h_1,h_2 \in \St_G(m)$ such
  that
  \[
    \psi_m (h_1) = (a,1,\overset{p-3}{\dots},1,*,* \;{,}\;
    1,\overset{p^m-p}{\dots},1) \qquad \text{and} \qquad \psi_m(h_2)
    = (b,*,*,1,\overset{p-3}{\dots},1 \;{,}\;
    1,\overset{p^m-p}{\dots},1).
  \]
  Recall that $\tilde a \in \St_G(m-1)$ is such that
  $\varphi_v(\tilde a) = a^{-1}$.  Modulo
  $\gamma_4(G)\times \overset{p^m}{\dots} \times \gamma_4(G)$, we
  obtain
  \begin{align*}
    (    [a,b,a], 1,\overset{p^m-1}{\dots},1) %
    & \equiv \psi_m([ [g,x]^{\tilde a},h_1]) \in \psi_m([N,G]), \\
    ([a,b,b], 1,\overset{p^m-1}{\dots},1) %
    & \equiv \psi_m([ [g,x]^{\tilde a},h_2]) \in \psi_m([N,G]).
  \end{align*}   
  This yields \eqref{equ:gamma3-included}, as before.
\end{proof}

\begin{remark}\label{rmk:CSP-bound} We observe that Propositions~\ref{prop:branching}, \ref{prop:GGS}
  and~\ref{prop:2-jump} improve the following fact from \cite[Proof of Thm.~4]{NewHorizons}: for $G$ regular branch over a subgroup $K$,  if $N \trianglelefteq G$ is such that  $N \subseteq
  \St_G(m)$ with  $m \in \mathbb{N}_0$ maximally chosen, then $\psi_{m+1}^{-1}(K'\times \overset{p^{m+1}}{\dots}\times K')\subseteq [N,G]$. Hence if $k\in \mathbb{N}_0$ is such that $\St_G(k)\subseteq K'$, then $\St_G(m+k+1)\subseteq [N,G]$.
  \end{remark}

\begin{proof}[Proof of Theorem~\ref{thm:sharper-CSP}]
  The theorem simply summarises the results from
  Corollary~\ref{cor:n-step-general} and Propositions~\ref{prop:GGS}
  and~\ref{prop:2-jump}.
\end{proof}


\section{Twisted direct sums and normal subgroups}\label{sec:chain}

As before, let $T$ be the automorphism group of the $p$-adic tree, and
let $a \in \Aut T$ denote the rooted $p$-cycle permuting transitively
the first level vertices. Let $S = S_p$ be the Sylow pro-$p$ subgroup
of $\Aut T$ consisting of all elements whose labels are powers
of~$a$. Throughout this section let
\[
  G = \langle a \rangle \ltimes \St_G(1) \le S
\]
be a self-similar subgroup containing~$a$, and write
$\psi \colon \St_G(1) \to G \times \overset{p}{\dots} \times G$ for
the standard `geometric' embedding, which is
$\langle a \rangle$-equivariant. Let $V$ be a finite
$\mathbb{F}_pG$-module. The \emph{$\psi$-twisted $p$-fold direct sum
  of $V$}, denoted by $V \vert^{\oplus p}_\psi$, is the finite
$\mathbb{F}_pG$-module defined as follows. The underlying vector space
of $V \vert^{\oplus p}_\psi$ is the $p$-fold direct sum
$V^{\oplus p} = V \oplus \overset{p}{\dots} \oplus V$, the element
$a$ acts on $V^{\oplus p}$ by cyclic permutation of the $p$ summands,
and $\St_G(1)$ acts on $V^{\oplus p}$ via $\psi$ and in accordance
with the natural action of $G \times \overset{p}{\dots} \times G$ on
$V^{\oplus p}$.

Using this construction we show that, for every $m \in \mathbb{N}$,
the $G$-action on the elementary abelian groups $\St_S(m)/\St_S(m+1)$
is uniserial; this means that for every non-trivial $G$-invariant
subgroup $H$ of $\St_S(m)/\St_S(m+1)$ the index of $[H,G]$ in $H$ is
equal to~$p$. Consequently, the $\mathbb{F}_pG$-submodules of each
section $\St_S(m)/\St_S(m+1)$ form a chain. In this way we get a
direct handle on the normal subgroups $N \trianglelefteq_\mathrm{o} G$
satisfying $\St_G(m+1)\subsetneq N\subseteq \St_G(m)$ for some
$m\in\mathbb{N}$.

\begin{definition}
  Let $W_0 = \mathbb{F}_p$ be the trivial $\mathbb{F}_pG$-module
  and, for $m \in \mathbb{N}$, we define recursively
  \[
    W_m = W_{m-1} \vert^{\oplus p}_\psi.
  \]
  Furthermore, we observe that there is a natural isomorphism of
  $\mathbb{F}_pG$-modules
  \begin{equation} \label{equ:iso-Wm}
    \St_S(m)/\St_S(m+1) \cong W_m \quad \text{for
      $m \in \mathbb{N}_0$.}
  \end{equation}
\end{definition}

Next we describe, for each $m \in \mathbb{N}$, a family of natural
$\mathbb{F}_pG$-submodules $V_\mathbf{j}$ of $W_m$, indexed by
elements $\mathbf{j} = (j_1,\dots, j_m)$ of the parameter set
$J_m = \{1,2,\dots,p\}^m$; subsequently, our aim will be to prove
that the modules $V_\mathbf{j}$ provide all non-trivial submodules,
subject to suitable extra conditions on~$G$.  The trivial submodule is labelled by an additional parameter $(0,p, \dots, p)$.

\begin{definition} \label{def:W1-and-submodules}
  Clearly, the $\mathbb{F}_pG$-module $ W_1$ admits the  $\mathbb{F}_pG$-submodules
  \[
    V_{(j)} =W_1.(a-1)^{p-j} \quad \text{with
      $\dim_{\mathbb{F}_p}(V_{(j)}) = j$,} \qquad \text{for $0 \le j \le p$.}
  \]
  Incidentally, it is not difficult to describe explicit $\mathbb{F}_p$-bases for
  these submodules:
  \begin{align*}
    V_{(p)} & = \mathrm{span}_{\mathbb{F}_p} \{ (1,0,\dots,0),
              (0,1,0,\dots,0), \dots, (0,\dots,0,1) \}, \\
    V_{(p-1)} & = \mathrm{span}_{\mathbb{F}_p} \{
                (1,-1,0,\dots,0),  (0,1,-1,0,\dots,0), \dots,
                (0,\dots,0,-1,1) \}, \\
    V_{(p-2)} & = \mathrm{span}_{\mathbb{F}_p} \{ (1,-2,1,0,\dots,0),
                (0,1,-2,1,0,\dots,0), \dots, (0,\dots,0,1,-2,1) \},\\
            &\,\,\,\,\vdots\\
    V_{(1)} & = \mathrm{span}_{\mathbb{F}_p} \{ (1,\dots,1)\}, \qquad
              \text{and} \qquad V_{(0)} = \{0\}.
  \end{align*}

  Now let $m \in \mathbb{N}$.  For any
  $\mathbf{j} = (j_1,\dots,j_m) \in \mathbb{N}_0^{\, m}$ we write
  \[
    \mathbf{j}' = (j_1, \dots, j_{m-1}) \qquad \text{and} \qquad
    \mathbf{j} = \mathbf{j}' \boxplus (j_m).
  \]
  The \emph{predecessor} of $\mathbf{j} = (j_1,\dots,j_m) \in J_m$ is 
  defined to be
  \[
    \mathbf{j}^- =
    \begin{cases}
      \mathbf{j}' \boxplus (j_m-1) & \text{for $2 \le j_m \le p$,}  \\
      (\mathbf{j}')^- \boxplus (p) & \text{for $j_m = 1$ and $m >
        1$,} \\
      (0) & \text{for $\mathbf{j} = (1)$.}
    \end{cases}
  \]
  We observe that $\mathbf{j}^- \in J_m$ unless
  $\mathbf{j} = (1,\dots,1)$, in which case
  $\mathbf{j}^- = (0,p,\dots,p)$.  In fact, the predecessor relation
  induces a linear order on the augmented parameter set
  $J_m \cup \{(0,p,\dots,p)\}$, viz.\ the lexicographic order.
  
  For $m=1$ we already identified $V_\mathbf{j}$ as a submodule of the
  $\mathbb{F}_pG$-module $W_1$, and we observe that
  $V_\mathbf{j} / V_{\mathbf{j}^-} \cong W_0$ for
  $\mathbf{j} \in J_1$.  Now suppose that $m \ge 2$ and consider
  $\mathbf{j} = (j_1,\dots,j_m) \in J_m$.  By
  recursion, $V_{\mathbf{j}'}$ and $V_{(\mathbf{j}')^-}$ are
  submodules of $W_{m-1}$, and  $V_{\mathbf{j}'}/V_{(\mathbf{j}')^-} \cong W_0$.  Thus
  $V_{\mathbf{j}'} \vert^{\oplus p}_\psi$ and
  $V_{(\mathbf{j}')^-} \vert^{\oplus p}_\psi$ are naturally submodules
  of the $\mathbb{F}_pG$-module $W_{m-1} \vert^{\oplus p}_\psi = W_m$
  and
  \[
    \bigl( V_{\mathbf{j}'} \vert^{\oplus p}_\psi \bigr) / \bigl( V_{(\mathbf{j}')^-}
    \vert^{\oplus p}_\psi \bigr) \cong W_1.
  \]
  We define $V_\mathbf{j}$ to be the submodule of $W_m$ that lies
  between $V_{\mathbf{j}'} \vert^{\oplus p}_\psi$ and
  $V_{(\mathbf{j}')^-} \vert^{\oplus p}_\psi$ and corresponds to the
  submodule $V_{(j_m)}$ of
  $W_1 \cong \bigl( V_{\mathbf{j}'} \vert^{\oplus p}_\psi \bigr) /
  \bigl( V_{(\mathbf{j}')^-} \vert^{\oplus p}_\psi \bigr)$.  In
  addition, we set $V_{(0,p,\dots,p)} = \{0\}$, the trivial
  submodule of~$W_m$.
\end{definition}

An elementary, but useful feature of the above construction is that,
for $m \ge 1$ and $\mathbf{j} \in J_m$, the module $V_\mathbf{j}$ lies
subdirectly inside the $\mathbb{F}_pG$-module
$V_{\mathbf{j}' \boxplus (p)} = V_{\mathbf{j}'} \vert^{\oplus
  p}_\psi$.  (For $m=1$, we pragmatically agree to read $V_{()}$ as
$W_0 = \mathbb{F}_p$.)

\begin{example}
  The $\mathbb{F}_pG$-modules just defined yield a descending chain of
  submodules
  \begin{multline*}
    W_3 = \underline{V_{(p,p,p)}} \supsetneq V_{(p,p,p-1)}
    \supsetneq \cdots \supsetneq V_{(p,p,1)} \supsetneq
    \underline{V_{(p,p-1,p)}} \supsetneq V_{(p,p-1,p-1)} \supsetneq
    \cdots \supsetneq  V_{(p,p-1,1)} \\
    \supsetneq \underline{V_{(p,p-2,p)}} \supsetneq \cdots \,\,\cdots
    \supsetneq V_{(1,2,1)} \supsetneq \underline{V_{(1,1,p)}}
    \supsetneq V_{(1,1,p-1)} \supsetneq \cdots \supsetneq
    V_{(1,1,1)} \supsetneq \underline{V_{(0,p,p)}} = \{0\},
    \end{multline*}
    with each term of index $p$ inside its predecessor.  The
    underlined terms $V_{(i,j,p)} = V_{(i,j)} \vert^{\oplus p}_\psi$
    are the ones that arise naturally from the terms of the
    corresponding filtration for~$W_2$, by recursion.
\end{example}

\begin{proposition} \label{pro:submodules-of-Wm}
  Suppose that $G$ contains a directed automorphism $b \in \St_S(1)$
  such that
  \[
    \psi(b) = (a^{e_1},\dots,a^{e_{p-1}},b) \quad \text{with}
    \quad \sum\nolimits_{i=1}^{p-1} e_i \not\equiv_p 0,
  \]
  and let $m \in \mathbb{N}$.  Then the modules $V_\mathbf{j}$,
  $\mathbf{j} \in J_m$, are precisely the non-trivial submodules of
  the $\mathbb{F}_pG$-module $W_m$.

  They form a descending chain, with $V_{\mathbf{j}^-}$ being the
  unique maximal submodule of $V_\mathbf{j}$ and
  $V_\mathbf{j}/V_{\mathbf{j}^-} \cong W_0 = \mathbb{F}_p$ for each
  $\mathbf{j} \in J_m$.  In particular, every
  $\mathbb{F}_pG$-submodule of $W_m$ is cyclic.
\end{proposition}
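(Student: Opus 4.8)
My plan is to reduce the statement to the single assertion that $W_m$ is a \emph{uniserial} $\mathbb{F}_pG$-module, i.e.\ that its submodules form a chain, and then to read everything off. Granting uniseriality, everything is formal: the chain $W_m = V_{(p,\dots,p)}\supsetneq\cdots\supsetneq V_{(1,\dots,1)}\supsetneq V_{(0,p,\dots,p)}=0$ has all quotients isomorphic to $W_0=\mathbb{F}_p$ and length $p^m=\dim_{\mathbb{F}_p}W_m$, hence is a composition series; a uniserial module has a unique composition series, so these are precisely the submodules, $V_{\mathbf{j}^-}$ is the unique maximal submodule of $V_\mathbf{j}$ with quotient $\mathbb{F}_p$, and every submodule of a uniserial module is cyclic (any element lying outside its unique maximal submodule generates it).

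To prove uniseriality I would argue by induction on $\dim_{\mathbb{F}_p}V$ that, for every $\mathbb{F}_pG$-submodule $V$ of some $W_k$, the twisted sum $V\vert^{\oplus p}_\psi\le W_{k+1}$ is uniserial, with submodule chain the obvious initial segment of the chain of the $V_\mathbf{j}$; applying this to $V=W_{m-1}$ then settles $W_m$. The cases $\dim V\le 1$ are immediate: a one-dimensional module is trivial, since $G$ acts on $W_k$ through a finite $p$-group, so $V\vert^{\oplus p}_\psi\cong W_0\vert^{\oplus p}_\psi=W_1\cong\mathbb{F}_p[C_p]\cong\mathbb{F}_p[s]/(s^p)$, a truncated polynomial ring, which is uniserial with submodules $V_{(j)}=W_1(a-1)^{p-j}$. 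For $\dim V\ge 2$ let $V^-$ be the maximal submodule of $V$, so $V/V^-\cong\mathbb{F}_p$; applying the exact functor $(-)\vert^{\oplus p}_\psi$ to $0\to V^-\to V\to V/V^-\to 0$ yields a short exact sequence $0\to T\to V\vert^{\oplus p}_\psi\xrightarrow{\,q\,}W_1\to 0$ with $T:=V^-\vert^{\oplus p}_\psi$ uniserial by the inductive hypothesis; write $T^-$ for its unique maximal submodule.

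Now take a nonzero submodule $U\le V\vert^{\oplus p}_\psi$. If $U\subseteq T$ it is one of the chain members of $T$. Otherwise $q(U)$ is a nonzero submodule of $W_1$, so $q(U)=V_{(j)}$ for a unique $j$ by the one-dimensional case, and it suffices to prove $U\supseteq T$: then $U=q^{-1}(V_{(j)})$ is forced, and the submodules of $V\vert^{\oplus p}_\psi$ are precisely the chain members of $T$ together with $q^{-1}(V_{(1)})\subsetneq\cdots\subsetneq q^{-1}(V_{(p)})=V\vert^{\oplus p}_\psi$, a single chain. To obtain $U\supseteq T$, pick $u=(u_1,\dots,u_p)\in U$ with $q(u)\ne 0$. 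Since $\psi(b)=(a^{e_1},\dots,a^{e_{p-1}},b)$ and each of $a^{e_i}-1$ and $b-1$ lies in the augmentation ideal, the element
\[
  z:=u\,(b-1)=\bigl((a^{e_1}-1)u_1,\;\dots,\;(a^{e_{p-1}}-1)u_{p-1},\;(b-1)u_p\bigr)
\]
has every coordinate in the radical $V^-$ of $V$, hence $z\in T$; since $T$ is uniserial it is then enough to find $u\in U$ with $z\notin T^-$, for then $\langle z\rangle_{\mathbb{F}_pG}=T\subseteq U$.

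This last point is the crux, and it is exactly where the hypothesis $\sum_{i=1}^{p-1}e_i\not\equiv_p 0$ is used. Identifying $T/T^-\cong\mathbb{F}_p$ via $(w_1,\dots,w_p)\mapsto\sum_i\overline{w_i}$ (with $\overline{w_i}$ the image of $w_i$ in $V^-/(V^-)^-\cong\mathbb{F}_p$), one computes the image of $z$ in $T/T^-$ to equal $\mu(q(u))$ for a fixed linear functional $\mu$ on $W_1$; there are two cases, according to the two-dimensional layer $E=V/(V^-)^-$. If the rooted generator $a$ acts nontrivially on $E$ (the ``interior of a block'' case), then $\mu\colon(x_1,\dots,x_p)\mapsto\sum_{i=1}^{p-1}e_ix_i$; if $a$ acts trivially on $E$ (the ``block-boundary'' case, where a short recursion on the twisted-sum structure shows that instead $b$ carries this extension with class $\sum_{i=1}^{p-1}e_i$), then $\mu\colon(x_1,\dots,x_p)\mapsto x_p$. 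In either case the circulant linear system $\{\,\mu(a^{-k}x)=0:0\le k<p\,\}$ on $x\in W_1\cong\mathbb{F}_p^{\,p}$ has full rank, because the governing polynomial — namely $\sum_{i=1}^{p-1}e_ix^i$ in the first case, with value $\sum e_i\ne 0$ at $x=1$ by hypothesis, and the constant $1$ in the second — does not vanish at $x=1$. Hence the only $\langle a\rangle$-submodule contained in $\ker\mu$ is $0$, so the nonzero submodule $q(U)=V_{(j)}$ is not contained in $\ker\mu$; choosing $u\in U$ with $\mu(q(u))\ne 0$ gives $z\notin T^-$ and closes the induction. The only genuinely delicate ingredient is the bookkeeping that tracks the action of $b$ through the successive one-dimensional layers — in particular verifying that at a block boundary its extension class is exactly $\sum_{i=1}^{p-1}e_i$; the remainder is routine manipulation within the twisted-sum formalism set up in this section.
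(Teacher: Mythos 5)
Your overall architecture is sound, and at its core it runs the same induction as the paper: the decisive computation is that the class of $u(b-1)$ in the relevant one-dimensional layer equals $\bigl(\sum_{i=1}^{p-1}e_i\bigr)\cdot[\text{$a$-part}]+[\text{$b$-part}]$, with the hypothesis $\sum_{i=1}^{p-1}e_i\not\equiv_p 0$ guaranteeing nonvanishing at block boundaries. Your uniseriality/exact-functor packaging and the circulant full-rank step are cosmetic differences (the circulant step can even be skipped: every nonzero submodule of $W_1$ contains the diagonal $V_{(1)}$, and both of your functionals $\mu$ are nonzero on it, since $\mu(1,\dots,1)$ is $\sum e_i$ resp.\ $1$).

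However, two points need attention. First, the ``short recursion'' you defer is not routine bookkeeping; it is precisely the inductive heart of the paper's proof, namely the dichotomy that on each two-dimensional layer $E$ exactly one of $a$, $b$ acts nontrivially (interior layers: $a$ nontrivial and $b$ trivial, read off from the block $\cong W_1$ on which $\St_G(1)$ acts trivially; boundary layers: $a$ trivial by the diagonal argument, and $b$ nontrivial by the recursion, which is where $\sum e_i\not\equiv_p 0$ enters via the dichotomy one level down). Without it your functional $\mu$ is not even well defined: in the ``$a$ nontrivial'' case you tacitly assume $b$ acts trivially on $E$; if it did not, $\mu$ would acquire an $x_p$-term of unknown coefficient and its value at $(1,\dots,1)$ could vanish, breaking the argument. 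Also, the boundary extension class is in general only a nonzero scalar multiple determined by the layer below, not ``exactly $\sum e_i$''; only the nonvanishing matters. Second, your induction on $\dim_{\mathbb{F}_p}V$ over submodules of arbitrary $W_k$ is mildly circular: the existence of a unique maximal submodule $V^-$ with $V/V^-\cong\mathbb{F}_p$ presupposes that $V$ is cyclic (uniserial), which for a small submodule of a large $W_k$ is only available after $W_k$ itself has been handled, and $\dim W_{k-1}$ may exceed $\dim V$. This is repaired by inducting on $m$ and, within the step, running along the (already known) chain of submodules of $W_{m-1}$ — which is exactly how the paper organises the argument. With these repairs your proof is complete and coincides in substance with the paper's.
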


\begin{proof}
  There is no harm in assuming that $G = \langle a,b \rangle$ is the
  non-torsion GGS-group generated by $a$ and~$b$.  We argue by
  induction on~$m$.  For $m=1$, the action of $G$ on $W_1$ factors
  through $G / \St_G(1) = \langle \overline{a} \rangle \cong C_p$ and
  the situation can be explicitly described as in
  Definition~\ref{def:W1-and-submodules}.

  Now suppose that $m \ge 2$.  Clearly, the modules $V_\mathbf{j}$,
  $\mathbf{j} \in J_m$, form a descending chain, with
  $V_{\mathbf{j}^-}$ a maximal submodule of $V_\mathbf{j}$ and
  $V_\mathbf{j}/V_{\mathbf{j}^-} \cong \mathbb{F}_p$ for each
  $\mathbf{j} \in J_m$.  Thus it suffices to show that, for
  $\mathbf{j} = (j_1,\dots,j_m) \in J_m \smallsetminus
  \{(1,\dots,1)\}$ and
  $v \in V_\mathbf{j} \smallsetminus V_{\mathbf{j}^-}$,
  \begin{enumerate}
  \item
    $v (a-1) \in V_{\mathbf{j}^-} \smallsetminus V_{\mathbf{j}^{--}}$
    and $v (b-1) \in V_{\mathbf{j}^{--}}$ if $j_m \ne 1$;
  \item $v (a-1) \in V_{\mathbf{j}^{--}} $ and
    $v (b-1) \in V_{\mathbf{j}^-} \smallsetminus V_{\mathbf{j}^{--}}$
    if $j_m = 1$.
  \end{enumerate}
  Assertion (i) follows directly from the definitions, in particular
  $V_{\mathbf{j}' \boxplus (p) } \supseteq V_\mathbf{j} \supseteq
  V_{(\mathbf{j}')^- \boxplus (p)}$, and and the fact that the module
  \[
    V_{\mathbf{j}' \boxplus (p) } / V_{(\mathbf{j}')^- \boxplus (p)} =
    \bigl( V_{\mathbf{j}'} \vert^{\oplus p}_\psi \bigr) / \bigl(
    V_{(\mathbf{j}')^-} \vert^{\oplus p}_\psi \bigr) \cong W_1
  \]
  is fully understood.

  It remains to establish~(ii).  Suppose that $j_m=1$ and write
  $\mathbf{i} = \mathbf{j}'$ for short.  Modulo
  $V_{\mathbf{j}^-} = V_{\mathbf{i}^- \boxplus (p)} = V_{\mathbf{i}^-}
  \vert^{\oplus p}_\psi$, we may write
  $v \in V_\mathbf{j} = V_{\mathbf{i} \boxplus (1)} \le V_{\mathbf{i}}
  \vert^{\oplus p}_\psi$ as $(v_1,\dots,v_1)$ with
  $v_1 \in V_\mathbf{i} \smallsetminus V_{\mathbf{i}^-}$.  From
  $V_{\mathbf{j}^-} (a-1) \subseteq V_{\mathbf{j}^{--}}$ and
  $(v_1,\dots,v_1) (a-1) = (0,\dots,0)$ we deduce that
  $v(a-1) \in V_{\mathbf{j}^{--}}$.  Similarly,
  $V_{\mathbf{j}^-} (b-1) \subseteq V_{\mathbf{j}^{--}}$ and we
  obtain
  \[
    v(b-1) \equiv \big( v_1 (a^{e_1}-1), \dots, v_1 (a^{e_{p-1}}-1),
    v_1 (b-1) \big) \qquad \mathrm{modulo} \quad
      V_{\mathbf{j}^{--}}.
  \]
  In order to show that
  $v(b-1) \not\in V_{\mathbf{j}^{--}} = V_{\mathbf{i}^- \boxplus
    (p-1)}$ we need to establish that
  \begin{equation} \label{equ:quersumme} \sum\nolimits_{i=1}^{p-1} v_1
    (a^{e_i}-1) \;+\; v_1(b-1) \not\equiv 0 \qquad \mathrm{modulo}
    \quad V_{\mathbf{i}^{--}}.
  \end{equation}
  By induction, applied to
  $v_1 \in V_\mathbf{i} \smallsetminus V_{\mathbf{i}^-}$, there are
  two cases: either (i)' $v_1(a-1) \not\equiv 0$ and $v_1(b-1) \equiv
  0$, or
  (ii)' $v_1(a-1) \equiv 0$ and $v_1(b-1) \not\equiv 0$ modulo
  $V_{\mathbf{i}^{--}}$.

  In case (i)' we deduce from $v_1 (a-1) \not\equiv 0$ and
  $v_1 (a-1)^2 \equiv 0$ modulo $V_{\mathbf{i}^{--}}$ that
  \begin{align*}
    \sum\nolimits_{i=1}^{p-1} v_1 (a^{e_i}-1) &= v_1 (a-1)
    \sum\nolimits_{i=1}^{p-1}
    (a^{e_i-1} + a^{e_i-2}+\dots + a + 1) \\
   &\equiv \underbrace{v_1 (a-1)}_{\not\equiv 0}
    \underbrace{\sum\nolimits_{i=1}^{p-1} e_i}_{\not\equiv_p 0} \not
    \equiv 0 \qquad \mathrm{modulo} \quad V_{\mathbf{i}^{--}},
  \end{align*}
  thus $v_1(b-1) \equiv 0$ modulo $V_{\mathbf{i}^{--}}$ implies
  \eqref{equ:quersumme}.  In case (ii)' we obtain directly
  \eqref{equ:quersumme}.
\end{proof}

Next we aim to describe the normal subgroups $N$ of the self-similar group $G$ that lie between two consecutive terms of the filtration
$\St_G(m)$, $m \in \mathbb{N}$.  In the setting of
Proposition~\ref{pro:submodules-of-Wm}, it suffices to identify those
submodules $V_\mathbf{j}$ of $W_m$ that arise as
$N \St_S(m+1)/\St_S(m+1)$ for $N \trianglelefteq G$.  We observe that
there is a natural isomorphism
$N/\St_G(m+1) \cong N \St_S(m+1)/\St_S(m+1)$, when
$\St_G(m+1) \subseteq N $.  Accordingly, we set
\begin{multline*}
  R_m = \{ \mathbf{j} \in J_m \mid \exists N \trianglelefteq G :
  \St_G(m+1) \subseteq N \subseteq \St_G(m) \; \text{and} \; N /
  \St_G(m+1) \cong V_\mathbf{j} \\ \text{via the natural and the
    $\mathbb{F}_pG$-module isomorphism in \eqref{equ:iso-Wm}} \}.
\end{multline*}
Since the submodules of $W_m$ form a chain, $R_m$ is closed under
taking predecessors and hence it remains to work out $\lvert R_m
\rvert = \log_p \lvert \St_G(m) : \St_G(m+1) \rvert$. These numbers have already been worked out in \cite{FeGuTh26} for branch multi-EGS groups and in \cite{FeZu14} for GGS-groups. Here, with less effort, we arrive at the following.

\begin{proposition}\label{prop:Rm-EGS}
  For every multi-EGS group $G$ the following hold:
  \begin{enumerate}
  \item $R_m \subseteq R_{m-1} \times \{ 1,2, \dots, p \}$ and hence
    $\lvert R_m \rvert \le p \lvert R_{m-1} \rvert$ for $m \ge 2$;
  \item if $G$ is non-torsion and regular branch over $[G,G]$, then
    $\lvert R_m \rvert \ge (p-1) p^{m-1}$ and hence
    $R_m \supseteq \{1,2, \dots, p-1 \} \times \{ 1,2, \dots, p
    \}^{m-1}$ for $m \ge 1$;
  \item if $G$ is a non-torsion GGS-group and regular branch over
    $[G,G]$, then
    \[
      R_1 = \{(1),(2), \dots, (p) \} \quad \text{and} \quad R_m =
        \{1,2, \dots, p-1 \} \times \{ 1,2, \dots, p \}^{m-1} \;
        \text{for $m \ge 2$.}
    \]
  \end{enumerate}
\end{proposition}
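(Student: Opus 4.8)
The plan is to treat the three assertions in turn, using throughout the module isomorphisms $\St_S(m)/\St_S(m+1)\cong W_m$ and $W_m\cong W_1\vert^{\oplus p^{m-1}}_\psi$, the subdirectness of $V_\mathbf{j}$ in $V_{\mathbf{j}'}\vert^{\oplus p}_\psi$ recorded after Definition~\ref{def:W1-and-submodules}, and the fact that $R_m$ is a down-set for the lexicographic predecessor order. For part~(i), fix $\mathbf{j}=(j_1,\dots,j_m)\in R_m$ with witnessing normal subgroup $N$, where $\St_G(m+1)\subseteq N\subseteq\St_G(m)$; since $m\ge 2$ we have $N\subseteq\St_G(1)$. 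Let $u$ be the leftmost first-level vertex. The section $N_1:=\varphi_u(N)$ lies inside $\St_G(m-1)$ and is normal in $G$: indeed $N$ is $\mathrm{st}_G(u)$-invariant and $\varphi_u(\mathrm{st}_G(u))=G$ because $G$ is fractal, so $N_1^{\varphi_u(c)}=N_1$ for every $c\in\mathrm{st}_G(u)$. Then $N':=N_1\St_G(m)\trianglelefteq G$ lies between $\St_G(m)$ and $\St_G(m-1)$, and its image in $W_{m-1}$ is the projection of $V_\mathbf{j}\subseteq W_{m-1}\vert^{\oplus p}_\psi$ onto its first summand, which by subdirectness equals $V_{\mathbf{j}'}$. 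Hence $\mathbf{j}'\in R_{m-1}$, and so $R_m\subseteq R_{m-1}\times\{1,\dots,p\}$.

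For part~(ii) it suffices, by the down-set property, to place the maximal index $(p-1,p,\dots,p)$ into $R_m$. I would take $\widetilde N=\psi_{m-1}^{-1}\bigl((G')^{p^{m-1}}\bigr)\St_G(m+1)$ (which for $m=1$ reads $G'\St_G(2)$), that is, the set of $g\in\St_G(m-1)$ whose $(m-1)$th level sections all lie in $G'$, enlarged by $\St_G(m+1)$. This is normal in $G$, since $(G')^{p^{m-1}}$ is invariant under the coordinate permutations and coordinatewise conjugations induced by $G$, and it sits between $\St_G(m+1)$ and $\St_G(m)$. Under $W_m\cong W_1\vert^{\oplus p^{m-1}}_\psi$ the image of $\widetilde N$ equals the block-diagonal module $\bigl(\text{image of }G'\text{ in }W_1\bigr)^{\oplus p^{m-1}}$: the inclusion $\subseteq$ is immediate, and $\supseteq$ holds because regular branching over $G'$ propagates to $(G')^{p^{m-1}}\subseteq\psi_{m-1}(\St_G(m-1))$, so every block-pattern coming from $(G')^{p^{m-1}}$ is actually attained. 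It then remains to identify the image of $G'$ in $W_1$ with $V_{(p-1)}$: the generating commutators $[a,b_i]$ and $[b_i,b_j]$ of $G'$ all map into $V_{(p-1)}$ (by telescoping, resp.\ triviality, of their coordinate labels), so the image lies in $V_{(p-1)}$; and for the directed generator $b$ with nonzero digit sum the image $w$ of $[a,b]$ satisfies $\sum_i i\,w_i=\pm\sum_k e_k\ne 0$, so $w\notin V_{(p-2)}$ and the cyclic module generated by $w$ is all of $V_{(p-1)}$. Consequently $\widetilde N$ realises $V_{(p-1)}^{\oplus p^{m-1}}=V_{(p-1,p,\dots,p)}$, so $(p-1,p,\dots,p)\in R_m$ and $R_m\supseteq\{1,\dots,p-1\}\times\{1,\dots,p\}^{m-1}$.

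For part~(iii) the inclusions $\supseteq$ come from part~(ii). When $G$ is a non-torsion GGS-group, the image of $\St_G(1)$ in $W_1$ is the cyclic $\mathbb{F}_p\langle a\rangle$-module generated by the label vector $(e_1,\dots,e_{p-1},0)$, whose coordinate sum $\sum_k e_k$ is nonzero; hence this image is all of $W_1$, the largest index in $R_1$ is $(p)$, and $R_1$ being a down-set forces $R_1=\{(1),\dots,(p)\}$. For $m=2$ I would use $\St_G(2)=\psi^{-1}\bigl((G')^{p}\bigr)$ from Proposition~\ref{prop:GGS-rank-p}: every first-level section of an element of $\St_G(2)$ lies in $G'$, so each block of its image in $W_2$ lies in $V_{(p-1)}$; hence the image of $\St_G(2)$, which carries the largest index of $R_2$, lies in $V_{(p-1,p)}$, whence $R_2\subseteq\{1,\dots,p-1\}\times\{1,\dots,p\}$, and with part~(ii) this is an equality. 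For $m\ge 3$, part~(i) together with induction gives $R_m\subseteq R_{m-1}\times\{1,\dots,p\}=\{1,\dots,p-1\}\times\{1,\dots,p\}^{m-1}$, and part~(ii) again yields equality.

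The step I expect to be the main obstacle is the identification of the image of $G'$ in $W_1$ with the full submodule $V_{(p-1)}$ in part~(ii): the inclusion into $V_{(p-1)}$ is routine, whereas the reverse inclusion is exactly where the non-torsion hypothesis enters, via the non-vanishing of the second coordinate moment $\sum_i i\,w_i=\pm\sum_k e_k$; if this vanished one would obtain only the weaker bound $|R_m|\ge k\,p^{m-1}$ with $k<p-1$. A subsidiary technical point, needed so that $\widetilde N$ attains the whole block-diagonal image, is the propagation $(G')^{p^{n}}\subseteq\psi_n(\St_G(n))$ of the regular-branch condition, which is standard but requires an inductive unwinding of the definition.
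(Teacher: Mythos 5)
Your argument is correct, and for parts (i) and (iii) it follows essentially the same route as the paper: (i) is proved exactly as in the paper by projecting $N$ to the section at the leftmost first-level vertex and invoking subdirectness of $V_\mathbf{j}$ in $V_{\mathbf{j}'}\vert^{\oplus p}_\psi$ (your explicit remarks on fractality, on $N'=\varphi_u(N)\St_G(m)$ and on normality fill in details the paper leaves implicit), and (iii) likewise rests on Proposition~\ref{prop:GGS-rank-p} plus induction via (i) and (ii). The genuine difference is in part (ii): the paper argues by counting, using that the chain structure makes $R_m$ the down-set of size $\lvert R_m\rvert=\log_p\lvert\St_G(m):\St_G(m+1)\rvert$, bounded below via $G'\times\overset{p^{m-1}}{\dots}\times G'\subseteq\psi_{m-1}(\St_G(m))$ together with the order fact $\log_p\lvert G':\St_G(2)\rvert=p-1$, which it uses without further justification; you instead exhibit the explicit witness $\widetilde N=\psi_{m-1}^{-1}\bigl((G')^{p^{m-1}}\bigr)\St_G(m+1)$ realising $V_{(p-1,p,\dots,p)}$, and prove directly that the image of $G'$ in $W_1$ is exactly $V_{(p-1)}$, the non-torsion hypothesis entering through the moment functional $\sum_i i\,v_i$, which vanishes on $V_{(p-2)}$ but not on the image of $[a,b]$. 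Your route is a little longer but more self-contained: it re-derives, rather than quotes, the fact that $G'$ has image of dimension $p-1$ modulo level two (equivalently $\log_p\lvert G':G'\cap\St_G(2)\rvert=p-1$), and it produces a concrete normal subgroup attaining the top index, which the counting argument only shows to exist; analogously, for $R_1$ in (iii) you show the image of $\St_G(1)$ in $W_1$ is all of $W_1$ instead of computing $\lvert\St_G(1):\St_G(2)\rvert=p^p$ --- the two are equivalent. The only ingredients you should spell out in a full write-up are precisely the ones you flag: the iterated branching $(G')^{p^n}\subseteq\psi_n(\St_G(n))$, and the identification $V_{(p-1)}^{\oplus p^{m-1}}=V_{(p-1,p,\dots,p)}$ coming from the recursive definition of the modules $V_\mathbf{j}$; both are routine.
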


\begin{proof}
  We just saw that
  $\lvert R_m \rvert = \log_p \lvert \St_G(m) : \St_G(m+1) \rvert$
  already determines $R_m \subseteq J_m$.

  \smallskip
  
  (i) Let $m \ge 2$ and $\mathbf{j} \in R_m$.  Choose
  $N \trianglelefteq G$ with
  $\St_G(m+1) \subseteq N \subseteq \St_G(m)$ such that
  $N / \St_G(m+1) \cong V_\mathbf{j}$.  Let $u$ denote the leftmost
  vertex at level~$1$.  Then $N$ projects under $\varphi_u$ to a
  normal subgroup $M \cong \varphi_u(N)$ of
  $\varphi_u(\St_G(1)) \cong G$ satisfying
  $\St_G(m) \subseteq M \subseteq \St_G(m-1)$ and
  $M / \St_G(m) \cong V_{\mathbf{j}'}$.  This shows that
  $\mathbf{j}' \in R_{m-1}$.

  \smallskip

  (ii) Suppose that $G$ is non-torsion and regular branch over
  $G'= [G,G]$, and let $m \in \mathbb{N}$.  From
  $\log_p \lvert G' : \St_G(2) \rvert = p-1$ and
  $G' \times \overset{p^{m-1}}{\dots} \times G' \subseteq
  \psi_{m-1}(\St_G(m))$ we conclude that
  $\lvert R_m \rvert = \log_p \lvert \St_G(m) : \St_G(m+1) \rvert \ge
  (p-1) p^{m-1}$.

  \smallskip

  (iii) Let $G$ be a non-torsion GGS-group and regular branch over
  $G'= [G,G]$.  Then
  $\lvert R_1 \rvert = \log_p \lvert \St_G(1) : \St_G(2) \rvert = p$,
  and Proposition~\ref{prop:GGS-rank-p} gives
  \[
    \lvert R_2 \rvert = \log_p \lvert \St_G(2) : \St_G(3) \rvert = p
    \log_p \lvert G' : \St_G(2) \rvert =
    (p-1)p.
  \]
  This yields the claim for $m \in \{1,2\}$.  For
    $m \ge 3$ we proceed by induction, using (i) and (ii).
\end{proof}

\begin{remark}
 Referring to part (ii) of Proposition~\ref{prop:Rm-EGS} above, depending on the group~$G$, the set $R_m$ can be strictly bigger than $ \{1,2, \dots, p-1 \} \times \{ 1,2, \dots, p
    \}^{m-1}$. Indeed, let $m=2$ and consider for example $G=\langle a,b,c\rangle$ with $\psi(b)=(a,1,\dots,1,b)$ and $\psi(c)=(c,a,a,1,\dots,1)$. Then 
    \[
    \psi(c^{a^{-1}}b^{-1}(b^a)^{-1})=(b^{-1},1,\dots,1,cb^{-1})\in\psi(\St_G(2))
    \]
    corresponds, modulo $\psi(\St_G(3))$, to an element in $V_{(p,p-1)}\smallsetminus V_{(p,p-2)}$, because 
    \begin{align*}
       \psi(b^{-1})&\equiv (a^{-1} ,1,\dots,1)\qquad\mathrm{modulo}\quad {\psi(\St_G(2))}
    \end{align*}
     and 
       \begin{align*}
       \psi(cb^{-1})&\equiv (a^{-1} ,a,a,1,\dots,1)\qquad\mathrm{modulo}\quad {\psi(\St_G(2))}.
    \end{align*}
This shows that $R_2= \{1,2, \dots, p \}^2 \smallsetminus \{(p,p)\}$. Indeed, since the total $a$-exponent of all $p^2$ components of any element of $\psi_2(\St_G(2))$ is zero, it follows that $V_{(p,p-1)}$ is the maximum possible module arising from a normal subgroup between $\St_G(2)$ and $\St_G(3)$.
\end{remark}

\begin{proof}[Proof of Theorem~\ref{thm:strong-sandwich-gives-chain}]
  The theorem is a direct consequence of Propositions
  \ref{pro:submodules-of-Wm} and~\ref{prop:Rm-EGS}, plus noting from \cite[Thm.~1.4]{ThUr21} that $\St_G(n)$ is a characteristic subgroup of $G$, for all $n\in\mathbb{N}$. 
\end{proof}

We remark that all the results of this section concerning multi-EGS groups also hold for the more general path groups~\cite{FeGuTh26}, which are  defined analogous to the multi-EGS groups but allowing for arbitrary paths.



\section{Normal generation and central width}\label{sec:width}

In this section we derive Corollaries~\ref{cor:normal-generation} and
\ref{cor:central-width}.

\begin{proof}[Proof of Corollary~\ref{cor:normal-generation}]
  Recall that $G$ is a non-torsion multi-EGS group with the congruence
  subgroup property.  Let $N \trianglelefteq G$ be a non-trivial
  normal subgroup, and let $m \in \mathbb{N}_0$ be maximal subject to
  $N \subseteq \St_G(m)$.  Theorem~\ref{thm:sharper-CSP} yields
  $\St_G(m+d) \subseteq [N,G]$ for $d \in \mathbb{N}$ depending on
  additional properties of~$G$.  Inspection of the various cases
  yields that it suffices to show that $d_G^\trianglelefteq(N) \le
  d$.  Clearly, $d_G^\trianglelefteq(N)$ equals $d(N/[N,G])$, the
  minimal number of generators of $N/[N,G]$.

  \smallskip

  Since $G$ is non-torsion, Proposition~\ref{pro:submodules-of-Wm}
  implies that each of the $d$ sections
  \[
    \big( \St_N(m+k-1) \St_G(m + k) \big) \;/\; \big(
    \St_{[N,G]}(m+k-1) \St_G(m + k) \big), \qquad k \in
    \{1,2,\dots,d\},
  \]
  constitutes a cyclic $\mathbb{F}_pG$-module.  Hence $N$ is normally
  generated by $d$ elements.

  In the case that $G$ is the Fabrykowski--Gupta group, we just showed
  $\rk^\trianglelefteq(G) \le 2$.  Since $G$ itself requires $2$
  generators (also as a normal subgroup), we deduce that
  $\rk^\trianglelefteq(G) = 2$.
\end{proof}

\begin{proof}[Proof of Corollary~\ref{cor:central-width}]
  Recall that $G$ is a non-torsion multi-EGS group and that
  $\overline{G}$ denotes the congruence completion of~$G$.  Every open
  normal subgroup of $\overline{G}$ arises as the closure
  $\overline{N}$ of a corresponding normal congruence subgroup
  $N \trianglelefteq G$.  Moreover,
  $[\overline{N},\overline{G}] = \overline{[N,G]}$ and hence it
  suffices to bound $\log_p \lvert N : [N,G] \rvert$.

  Let $N \trianglelefteq G$ be a normal congruence subgroup, and let
  $m \in \mathbb{N}_0$ be maximal subject to $N \subseteq \St_G(m)$.
  Theorem~\ref{thm:sharper-CSP} yields $\St_G(m+d) \subseteq [N,G]$
  for $d \in \mathbb{N}$ depending on additional properties of~$G$.
  Inspection of the various cases yields that it suffices to show that
  $\log_p \lvert N : [N,G] \rvert \le d$.

  \smallskip

  Since $G$ is non-torsion, Proposition~\ref{pro:submodules-of-Wm}
  implies that, for $k \in
    \{1,2,\dots,d\}$,
  \[
    i_k = \left\lvert \big( \St_N(m+k-1) \St_G(m + k) \big) : \big(
    \St_{[N,G]}(m+k-1) \St_G(m + k) \big) \right\rvert \le p.
  \]
  This implies
  $\log_p \lvert N : [N,G] \rvert = \sum_{k=1}^d \log_p i_k \le d$.

  In the case that $G$ is the Fabrykowski--Gupta group, we just showed
  $w_\mathrm{cen}(\overline{G}) \le 2$.  Since
  $\lvert G : \gamma_2(G) \rvert = p^2$, it follows that
  $w_\mathrm{cen}(\overline{G}) = 2$.
\end{proof}

We remark that our method applied to the Grigorchuk group $G$ gives
$d_G^\trianglelefteq(N) \le 3$ for every normal subgroup
$N \trianglelefteq G$ and $w_\mathrm{cen}(\overline{G}) = 3$.  This
confirms Bartholdi's conclusion from his more detailed analysis of
normal subgroups of the Grigorchuk group; see \cite[Cor.~5.2]{Ba05}.

\smallskip


 We finish with the proof of Corollary~\ref{cor:EGS-profinitely-non-isomorphic}.

\smallskip

\begin{proof}[Proof of Corollary~\ref{cor:EGS-profinitely-non-isomorphic}]
  Let $G$ and $H$ be non-torsion multi-EGS groups with the congruence
  subgroup property, such that $r_G<r_H$, and suppose that $G$ and $H$
  are regular branch over their respective derived subgroups. For each
  odd prime $p$, we have at least $\binom{p-1}{2}$ such pairs $G,H$ of
  multi-EGS groups (cf.\ Proposition~\ref{prop:EGS-CSP}(i)), and by
  Corollary~\ref{cor:central-width} their completions
  $\overline{G},\overline{H}$ have finite central width. Since the
  abelianisations $\overline{G}/[\overline{G},\overline{G}]$ and
  $\overline{H}/[\overline{H},\overline{H}]$ have different ranks, the
  result follows.
\end{proof}

Note that the lower bound $\binom{p-1}{2}$ in the proof
  above is sharp for $p=3$ (in fact, there are only two such groups)
but otherwise it is far from being optimal for larger primes.


\appendix
\section{Analogous results for the \v{S}uni\'{c} groups}\label{sec:Sunic} 

For this final part, let $p$ be any prime,  including the
  possibility $p=2$, and let $T$ denote the $p$-adic
tree.  For $r\in\mathbb{N}$ let
$f(x)=x^r+\alpha_{r-1}x^{r-1}+ \dots +\alpha_1x+\alpha_0$ be a
polynomial over~$\mathbb{F}_p$ with $\alpha_0 \ne 0$. The
\emph{\v{S}uni\'{c} group} $G=G_{p,f}$ is generated by the rooted
automorphism $a$ corresponding to the $p$-cycle
$(1 \, 2 \, \cdots \, p)\in \Sym(p)$, and by the $r$ directed
generators $b_1,\dots,b_r$ defined as follows:
\begin{align*}
  \psi(b_1)%
  & = (1,\dots,1,b_2), \quad \psi(b_2) =(1,\dots,1,b_3), \quad
    \dots, \quad \psi(b_{r-1}) =(1,\dots,1,b_{r}),\\
  \psi(b_{r})%
  & = (a,1,\dots,1,b_1^{-\alpha_0}b_2^{-\alpha_1}\cdots b_{r}^{-\alpha_{r-1}}).
\end{align*}
Similar to the previous setting, we refer to $\{a,b_1,\dots,b_r\}$ as
a standard generating system for~$G$ and we use this notation for the
generators without specific mention.  Below we collect certain facts
about \v{S}uni\'{c} groups; for more information, we refer
to~\cite{Su07,FrGa18}.  As before, $G' = [G,G]$ and $G'' = [G',G']$
denote the first and the second derived subgroups of $G$.

\begin{proposition}\cite[Lem.~1 and 6]{Su07}\label{prop:Sunic}
  Let $G=G_{p,f}$ be a \v{S}uni\'{c} group.
  \begin{enumerate}
  \item[\textup{(i)}] If $p$ is odd, then $G$ is regular branch
  over~$G'$.
  \item[\textup{(ii)}] If $p=2$ and $r = \deg(f) \ge 2$,
    then $G$ is regular branch over
    $K=\langle [a,b_2],\dots,[a,b_r]\rangle^G$.
  \end{enumerate}
\end{proposition}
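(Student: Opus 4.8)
The statement gathers two theorems of \v{S}uni\'{c}, and the natural plan is to verify, in each case, the three conditions defining ``regular branch over $K$'': spherical transitivity, self-similarity, and the branching inclusion $K \times \overset{p}{\dots} \times K \subseteq \psi(\St_K(1))$, taking $K = G' = [G,G]$ when $p$ is odd and $K = \langle [a,b_2],\dots,[a,b_r]\rangle^G$ when $p=2$. Self-similarity is built into the recursive definition of $a,b_1,\dots,b_r$. For spherical transitivity I would first observe that $G$ is fractal: writing $c = b_1^{-\alpha_0}b_2^{-\alpha_1}\cdots b_r^{-\alpha_{r-1}}$ for the last section of $b_r$, the section maps give $\varphi_1(b_r)=a$, $\varphi_p(b_i)=b_{i+1}$ for $i<r$ and $\varphi_p(b_r)=c$, so that, after shuffling these sections to other coordinates by conjugating with powers of $a$, one recovers a generating set of $G$ inside $\varphi_u(\St_G(1))$ for every first-level vertex $u$; since $a$ is a $p$-cycle on the first layer, spherical transitivity then follows by the usual induction on layers.

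The substantial part is the branching inclusion. My plan is to exhibit elements of $K$ whose $\psi$-images are supported on a single first-level coordinate and then argue that, together with their $G$-conjugates (available via fractality) and their $\langle a\rangle$-translates (available since $\psi$ is $\langle a\rangle$-equivariant), they normally generate $K \times 1 \times \overset{p-1}{\dots} \times 1$, whence $K \times \overset{p}{\dots} \times K \subseteq \psi(\St_K(1))$ by spherical transitivity. The computations I would carry out are: $\psi([b_i,b_r^{a^{p-1}}]) = (1,\dots,1,[b_{i+1},a])$ for $i<r$, which places $[a,b_2],\dots,[a,b_r]$ on a single coordinate; $\psi([b_i,b_r]) = (1,\dots,1,[b_{i+1},c])$ for $i<r$ (and $\psi([b_i,b_j]) = (1,\dots,1,[b_{i+1},b_{j+1}])$ for $i<j<r$), which take care of any commutators among the directed generators; and, when $p$ is odd, $\psi([b_r,b_r^{a}]) = ([a,c],1,\dots,1)$, which --- since $c \equiv b_1^{-\alpha_0}b_2^{-\alpha_1}\cdots b_r^{-\alpha_{r-1}} \pmod{G'}$ with $\alpha_0$ a unit in $\mathbb{F}_p$ --- combines with the elements just listed to place $[a,b_1]$ on a single coordinate as well. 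For $p$ odd these exhaust a set of normal generators of $G'$, giving the branching inclusion for $K=G'$. For $p=2$ the element $[b_r,b_r^{a}]$ no longer lies in $K$ (its image involves $[a,b_1]$, which is exactly the ``direction'' absent from $K$), so one recovers only the normal closure of $[a,b_2],\dots,[a,b_r]$, that is $K$ itself. Finiteness of $[G:K]$ is immediate for $p$ odd, where $[G:G'] = p^{\,r+1}$, and for $p=2$ it follows since modulo $K$ the generators $b_2,\dots,b_r$ are central of finite order while $\langle a,b_1\rangle$ is already finite in $G$.

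I expect two points to require the most care. The first is purely technical: upgrading the ``single-coordinate'' statements from holding modulo deeper terms of the lower central (resp.\ derived) series to exact containments. The clean remedy is to run the argument inside each finite quotient $G/\St_G(n)$, which is a finite $p$-group so that the relevant commutator recursion terminates, and then to pass to the limit using $\bigcap_n\St_G(n)=1$. The second, genuinely structural, obstacle is to pin down exactly why $[a,b_1]$ is reachable on a single coordinate when $p$ is odd but not when $p=2$: this is precisely the reason the branching subgroup changes between parts (i) and (ii), and it rests on the interplay of the hypothesis $\alpha_0\ne0$ (so that $c$ genuinely involves $b_1$), the fact that $[b_r,b_r^{a}]$ lies in $G'$ but not in $K$, and the orders of the directed generators. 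Getting this dichotomy right is the crux.
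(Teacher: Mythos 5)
The paper itself offers no proof of this proposition: it is imported from \v{S}uni\'{c}'s work \cite{Su07} (Lemmata 1 and 6), so the only comparison possible is with the standard direct verification, and your blind argument is essentially that verification and is correct. The key identities you propose do hold exactly (with the appropriate power of $a$, depending on the convention for how conjugation by $a$ shifts sections): $\psi([b_i,b_r^{\,a^{p-1}}])=(1,\dots,1,[b_{i+1},a])$ for $i<r$, $\psi([b_i,b_j])=(1,\dots,1,[b_{i+1},b_{j+1}])$, $\psi([b_i,b_r])=(1,\dots,1,[b_{i+1},c])$, and for $p\ge 3$ also $\psi([b_r,b_r^{\,a}])=([a,c],1,\dots,1)$. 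Because these are exact equalities, your worry about statements holding only modulo deeper terms does not materialise: one first places $1\times\dots\times 1\times\langle[a,b_2],\dots,[a,b_r]\rangle^G$ inside $\psi(\St_K(1))$ by conjugating with elements of $\St_G(1)$ having prescribed section at the distinguished vertex (fractality), and all later reductions are modulo normal subgroups already known to lie in $\psi(\St_K(1))$, which is an exact argument. The limiting procedure you sketch (working in $G/\St_G(n)$ and using $\bigcap_n\St_G(n)=1$) would in fact be the delicate point if it were needed, since $\psi(\St_K(1))$ is not known in advance to be closed; fortunately it is not needed.

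Three details should be made explicit. For $p=2$ the single-coordinate elements must lie in $K$, not merely in $G'$: this holds because the directed generators commute and $b_2,\dots,b_r$ are central modulo $K$, so $[b_i,b_r^{\,a}]\in K$ for all $i<r$. For $p$ odd, the passage from $[a,c]$ to $[a,b_1]$ is cleanest once $[a,b_2],\dots,[a,b_r]$ (and the commutators among the $b_i$, which are in fact trivial) are in hand: modulo their normal closure, $a$ commutes with $b_1^{\,\alpha_0}$, and since $b_1$ has order $p$ and $\alpha_0\not\equiv_p 0$, also with $b_1$; this is exactly where $\alpha_0\ne 0$ enters, as you anticipated. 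One aside in your write-up is inaccurate, though harmless: for $p=2$ the element $[b_r,b_r^{\,a}]$ \emph{does} lie in $K$ (the image of $b_r$ is central in $G/K$); the relevant point is rather that its $\psi$-image $([a,c],[c,a])$ is not supported on a single coordinate, so it cannot be used to isolate $[a,b_1]$ --- but nothing of that kind is required for part (ii), which only asserts branching over $K$. Finally, the finite-index claims are fine, but for $p=2$ the finiteness of $\langle a,b_1\rangle$ deserves its one-line justification, e.g.\ $\psi((ab_1)^2)=(b_2,b_2)$ gives $(ab_1)^4=1$, after which your central-by-finite argument for $\lvert G:K\rvert<\infty$ goes through.
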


The case $(p,r) = (2,1)$ yields an infinite dihedral group, which is
not regular branch.

\begin{proposition}\label{prop:Sunic-subdirect}\cite[Proof of Lem.~3.3]{FrGa18}
  Let $G=G_{p,f}$ be a \v{S}uni\'{c} group with $p$ odd.  Then
  $\psi(G')$ is subdirect in $G\times\overset{p}\dots\times G$, and
  $G$ is super strongly fractal.
\end{proposition}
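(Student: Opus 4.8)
The plan is to prove the subdirectness of $\psi(G')$ first, and then to derive super strong fractality from it almost formally.

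\emph{Subdirectness of $\psi(G')$.} By Proposition~\ref{prop:Sunic}(i), $G$ is regular branch over $G'$, so $G' \times \overset{p}{\dots} \times G' \subseteq \psi(\St_{G'}(1)) \subseteq \psi(G')$; moreover conjugation by $a$ permutes the $p$ coordinates of $\psi$ cyclically, and since $G' \trianglelefteq G$ the subgroup $\psi(G') \le G \times \overset{p}{\dots} \times G$ is invariant under this permutation. Consequently the image $\pi_i(\psi(G'))$ of the $i$th coordinate projection is one and the same subgroup of $G$ for every $i$, and it is enough to show that this subgroup is all of $G$. Writing $c = b_1^{-\alpha_0} b_2^{-\alpha_1} \cdots b_r^{-\alpha_{r-1}}$ for the last section of $b_r$, the recursion gives $\psi(b_i^{\,a}) = (b_{i+1},1,\dots,1)$ for $1 \le i \le r-1$ and $\psi(b_r^{\,a}) = (c,a,1,\dots,1)$ (conjugation by $a$ cyclically moving the last section to the front). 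Since $[b_i,a] = b_i^{-1} b_i^{\,a} \in G' \cap \St_G(1)$, one computes
\[
  \psi([b_i,a]) = (b_{i+1},\,1,\dots,1,\,b_{i+1}^{-1}) \quad (1 \le i \le r-1), \qquad
  \psi([b_r,a]) = (a^{-1}c,\ a,\ 1,\dots,1,\ c^{-1}).
\]
As all coordinate projections of $\psi(G')$ coincide, $\pi_1(\psi(G'))$ contains $b_2,\dots,b_r$ together with $a$ and $c^{-1}$; and since $b_1^{\alpha_0} = \bigl(b_2^{-\alpha_1} b_3^{-\alpha_2} \cdots b_r^{-\alpha_{r-1}}\bigr)\,c^{-1}$, while $\alpha_0 \not\equiv_p 0$ and $b_1$ has order $p$ (see \cite{Su07}), it follows that $b_1 \in \pi_1(\psi(G'))$ as well. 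Hence $\pi_1(\psi(G')) \supseteq \langle a, b_1, \dots, b_r \rangle = G$, so $\psi(G')$ is subdirect in $G \times \overset{p}{\dots} \times G$.

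\emph{Super strong fractality.} For this I would use only that $G$ is regular branch over $G'$ and that $G' \le \St_G(1)$ (the latter because $G/\St_G(1) \cong \langle a \rangle$ is abelian). An induction on $k$ gives $\psi_k^{-1}\bigl(G' \times \overset{p^k}{\dots} \times G'\bigr) \subseteq \St_{G'}(k) := G' \cap \St_G(k)$ for all $k \in \mathbb{N}_0$: in the inductive step every first-level section of such an element lies, by the inductive hypothesis, in $\St_{G'}(k-1) \subseteq G'$, so the element lies in $\psi^{-1}\bigl(G' \times \overset{p}{\dots} \times G'\bigr) \subseteq \St_{G'}(1)$ and fixes the $k$th layer. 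In particular, for $n \in \mathbb{N}$ the subgroup $A_n := \psi_{n-1}^{-1}\bigl(G' \times \overset{p^{n-1}}{\dots} \times G'\bigr)$ satisfies $A_n \subseteq \St_G(n)$ (indeed $A_n \subseteq \St_{G'}(n-1)$ and its level-$(n-1)$ sections lie in $G' \le \St_G(1)$), and $\varphi_w(A_n) = G'$ for every vertex $w$ at level $n-1$, since each element of $G'$ is the $w$-section of the $\psi_{n-1}$-preimage of the tuple supported at $w$. Now let $u$ be a vertex at level $n$, written $u = wx$ with $\lvert w \rvert = n-1$ and $x \in X$; for $h \in A_n$ we have $\varphi_u(h) = \varphi_x(\varphi_w(h))$ with $\varphi_w(h) \in G' \le \St_G(1)$, hence
\[
  \varphi_u(\St_G(n)) \;\supseteq\; \varphi_u(A_n) \;=\; \varphi_x\bigl(\varphi_w(A_n)\bigr) \;=\; \varphi_x(G') \;=\; \pi_x(\psi(G')) \;=\; G,
\]
the last equality by the subdirectness just proved. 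Thus $\varphi_u(\St_G(n)) = G$ for all $n \in \mathbb{N}$ and all vertices $u$ at level $n$, i.e.\ $G$ is super strongly fractal.

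I expect the commutator computation in the first part to be the crux: one has to track the sections carefully enough to see that $\pi_1(\psi(G'))$ contains $a$ and $b_1$ individually — the generators $b_2,\dots,b_r$ appear at once — and it is precisely the ``wrap-around'' section $c$ of $\psi(b_r)$, together with the hypothesis $\alpha_0 \neq 0$, that makes these two generators available. Once subdirectness is in hand, the super strong fractality is a formal consequence of the regular branch structure, as sketched above.
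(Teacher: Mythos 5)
Your proof is correct. The paper does not prove this proposition itself but cites the proof of Lemma~3.3 in Francoeur--Garrido, and your argument -- subdirectness of $\psi(G')$ read off from the sections of the commutators $[b_i,a]$ (where the availability of $a$ and of $c^{-1}$ in separate coordinates uses $p\ge 3$, and $\alpha_0\ne 0$ with $b_1^{\,p}=1$ recovers $b_1$), followed by pushing this through the branching over $G'$ to get $\varphi_u(\St_G(n))=G$ at every level -- is essentially that standard argument, so there is nothing to correct.
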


Next we explain that the final statement in the above
result also holds for $p=2$.

\begin{proposition}\label{prop:Sunic-super-strongly-fractal-2}
  Let $G=G_{2,f}$ be a \v{S}uni\'{c} group with
$r = \deg(f) \ge 2$.  Then $G$ is super strongly
  fractal.
\end{proposition}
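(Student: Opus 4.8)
The plan is to prove super strong fractality by induction on the level, following the blueprint used for regular branch multi-EGS groups in \cite[Proof of Prop.~3.11]{ThUr21} and for \v{S}uni\'{c} groups with $p$ odd in \cite[Proof of Lem.~3.3]{FrGa18}, but with the subgroup $K=\langle [a,b_2],\dots,b_r]\rangle^G$ — I mean $K=\langle [a,b_2],\dots,[a,b_r]\rangle^G$ — of Proposition~\ref{prop:Sunic}(ii) playing the role that $G'$ plays in the odd case (for $p=2$ the group $G$ need not be regular branch over~$G'$). Since $G$ is regular branch over~$K$ it is spherically transitive, so for each $n\in\mathbb{N}$ it will suffice to show that $\varphi_{1^n}(\St_G(n))=G$, where $1^n$ denotes the leftmost $n$th-level vertex.

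First I would settle the base case $n=1$, i.e.\ strong fractality, by a direct computation with the recursively defined generators. Put $w=b_1^{-\alpha_0}b_2^{-\alpha_1}\cdots b_r^{-\alpha_{r-1}}\in\St_G(1)$. As $[G:\St_G(1)]=2$ we have $\St_G(1)\supseteq\langle b_1,\dots,b_r,b_1^{\,a},\dots,b_r^{\,a}\rangle$, and the defining relations give $\varphi_1(b_i)=1$ for $1\le i<r$, $\varphi_1(b_r)=a$, $\varphi_1(b_i^{\,a})=b_{i+1}$ for $1\le i<r$ and $\varphi_1(b_r^{\,a})=w$, whence $\varphi_1(\St_G(1))\supseteq\langle a,b_2,\dots,b_r,w\rangle$. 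Because $\alpha_0\ne 0$, the element $w$ together with $b_2,\dots,b_r$ recovers~$b_1$, so $\varphi_1(\St_G(1))=\langle a,b_1,\dots,b_r\rangle=G$ (the reverse inclusion being automatic since $G$ is self-similar). The analogous computation at the vertex on the directed path, using $\varphi_2(b_i)=b_{i+1}$, $\varphi_2(b_r)=w$, $\varphi_2(b_i^{\,a})=1$ and $\varphi_2(b_r^{\,a})=a$, gives $\varphi_2(\St_G(1))=G$; by spherical transitivity $\varphi_x(\St_G(1))=G$ for every first-level vertex~$x$.

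For the inductive step, iterating $K\times\overset{p}\dots\times K\subseteq\psi(\St_K(1))$ yields $K\times\overset{p^n}\dots\times K\subseteq\psi_n(\St_K(n))\subseteq\psi_n(\St_G(n))$, so $\varphi_{1^n}(\St_G(n))\supseteq K$; it then remains to show that $\varphi_{1^n}(\St_G(n))$ surjects onto the finite $2$-group $G/K$, for then $\varphi_{1^n}(\St_G(n))=G$. Writing $1^n=1\cdot 1^{n-1}$ and using $\varphi_{1^n}(\St_G(n))=\varphi_{1^{n-1}}\bigl(\varphi_1(\St_G(n))\bigr)$ together with the inductive hypothesis at level $n-1$, one is reduced to controlling how much of $\St_G(n-1)$ is captured by $\varphi_1(\St_G(n))=\{x\in\St_G(n-1):(x,y)\in\psi(\St_G(1))\text{ for some }y\in\St_G(n-1)\}$; here the explicit shape of $\psi(b_i)$, in particular the fact that $b_1,\dots,b_{r-1}$ act trivially on the left subtree while $b_1^{\,a},\dots,b_{r-1}^{\,a}$ act trivially on the right subtree, is what should let one manufacture, level by level, elements of $\St_G(n)$ whose section at $1^n$ runs over a generating set of $G$ modulo~$K$. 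The hard part — and the point at which $p=2$ is genuinely more delicate than the odd case — is exactly this last step: already $\psi(\St_G(2))$ need not project onto $\St_G(1)$ coordinatewise (for instance, $w$ is not the section at the off-path first-level vertex of any element of $\St_G(2)$, since any such element would have its section at the on-path vertex outside $\St_G(1)$), so sections of the higher level stabilisers cannot be prescribed freely; one must instead keep careful track of which cosets of~$K$ arise as sections of $\St_G(n)$ and check that they exhaust~$G/K$, and I expect this bookkeeping, rather than any single clever identity, to be where the real work lies.
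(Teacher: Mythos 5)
Your setup is sound and matches the paper's outline: by spherical transitivity it suffices to handle one vertex per level, the $n=1$ case is a direct computation with the generators and their $a$-conjugates, and the regular branch property over $K=\langle [a,b_2],\dots,[a,b_r]\rangle^G$ gives $K\subseteq \varphi_v(\St_G(n))$ at the chosen $n$th-level vertex, leaving only the surjection onto $G/K$. But that last step is precisely what you do not prove: you state that one must ``keep careful track of which cosets of $K$ arise as sections of $\St_G(n)$ and check that they exhaust $G/K$'' and that this is ``where the real work lies''. That is an acknowledged gap, not a proof, and your own (correct) observation that sections of level stabilisers cannot be prescribed freely for $p=2$ --- e.g.\ $w\notin\varphi_1(\St_G(2))$ --- shows that the surjectivity genuinely requires an argument; nothing in your induction scheme produces the needed elements of $\St_G(n)$.

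The paper closes exactly this gap by exhibiting explicit elements rather than by coset bookkeeping. Working at the vertex on the directed path, for $n=2$ it uses $b_1,\dots,b_{r-2}\in\St_G(2)$, $b_{r-1}\in\St_G(2)$, $b_{r-1}^{\,b_r^{\,a}}\in\St_G(2)$ and $[a,b_r]^2\in\St_G(2)$, whose sections yield $b_3,\dots,b_r$, then $a$, then (using $\alpha_0=1$) $b_1$ and $b_2$, so $\varphi_{22}(\St_G(2))=G$. For $n\ge 3$ it uses the branch structure over $K$ to place $[a,b_i]$ ($i\in\{2,\dots,r\}$) in the last coordinate at level $n-1$, and $[a,b_{r-1}]^{[b_r,a]}$ and $[b_r,a]^2$ in the last coordinate at level $n-2$; these lie in $\St_G(n)$ and their sections at the distinguished $n$th-level vertex generate $G$ by the $n=2$ computation. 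To complete your proposal you would need to supply an analogous explicit construction (at the leftmost vertex, or switch to the on-path vertex as the paper does); without it, the induction does not go through.
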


\begin{proof}
  Since $G$ is spherically transitive, for every $n\in\mathbb{N}$ it
  suffices to show that $\varphi_v(\St_G(n))=G$ for one $n$th-level
  vertex~$v$. For $n=1$, this is immediate. For $n=2$, we have that
  $b_1,\dots,b_{r-2} \in \St_G(2)$, and therefore
  $b_3,\dots,b_r\in
  \varphi_{22}(\St_G(2))$. Also
  $b_{r-1}^{\, b_r^{\, a}}\in\St_G(2)$, so
  $a\in \varphi_{22}(\St_G(2))$, and of course $b_{r-1}\in \St_G(2)$
  which gives
  $b_1^{\, \alpha_0} b_2^{\, \alpha_1} \in \varphi_{22}(\St_G(2))$
  since $b_3,\dots,b_r\in \varphi_{22}(\St_G(2))$. Next, from
  $[a,b_r]^2\in\St_G(2)$ we obtain
 \[
   \varphi_{22}([a,b_r]^2) =
   \begin{cases}
     b_2^{\, \alpha_0} b_3^{\, \alpha_1} \cdots b_{r}^{\,
       \alpha_{r-2}}%
     & \text{if $\alpha_{r-1} = 0$,} \\
     b_2^{\, \alpha_0} b_3^{\, \alpha_1} \cdots b_{r}^{\,
       \alpha_{r-2}} (b_1^{\, \alpha_0} b_2^{\, \alpha_1} \cdots
     b_{r}^{\, \alpha_{r-1}})%
     & \text{if $\alpha_{r-1}=1$.}
   \end{cases}
 \]
 Since
 $b_1^{\, \alpha_0} b_2^{\, \alpha_1} \in \varphi_{22}(\St_G(2))$ and
 $b_3, \dots, b_r\in \varphi_{22}(\St_G(2))$, plus recalling that
 $\alpha_0=1$, we get $b_1,b_2\in \varphi_{22}(\St_G(2))$ and hence
 $\varphi_{22}(\St_G(2))=G$. For $n\ge 3$, the result follows using
 the fact that $G$ is regular branch over~$K$; see
   Proposition~\ref{prop:Sunic}. Specifically, to establish
 that $\varphi_{2\,\overset{n}\dots\, 2}(\St_G(n))=G$ we
 use the elements
 \[
   \psi_{n-1}^{\, -1} \bigl( (1,\overset{2^{n-1}-1}{\dots},1,[a,b_i])
   \bigr), \quad \text{ for $i\in\{2,\dots,r\}$,}
 \]
 together with
 \[
   \psi_{n-2}^{\, -1} \bigl(
   (1,\overset{2^{n-2}-1}{\dots},1,[a,b_{r-1}]^{[b_r,a]}) \bigr)
   \qquad \text{and} \qquad \psi_{n-2}^{\, -1} \bigl(
   (1,\overset{2^{n-2}-1}{\dots},1,[b_r,a]^2) \bigr). \qedhere
 \]
\end{proof}

For notational convenience, for $n\ge 2$ we write
$p^{\underline{n}}$ for the vertex
$p\overset{n}{\dots} p$ of the tree $T$.

\begin{proposition}\label{prop:Sunic-CSP}\cite[Lem.~3.4 and
  3.6]{FrGa18} Let $G=G_{p,f}$ be a \v{S}uni\'{c} group
  and $r = \deg(f)$.
  \begin{enumerate}
  \item[\textup{(i)}] If $p$ is odd, then
   $\St_G(r+3)\subseteq G''$.
  \item[\textup{(ii)}] If $p=2$ and $r\ge 2$, then
    $\St_G(r+n_G+2)\subseteq K'=[K,K]$, where $n=n_G$ is
      such that
      $\langle a, b_1,\dots,b_{r-1}\rangle\subseteq
      \varphi_{2^{\underline{n}}}(\mathrm{st}_K(2^n))$ for
      $K=\langle [a,b_2],\dots,[a,b_r]\rangle^G$.
  \end{enumerate}
  In particular $G$ has the congruence subgroup property.
\end{proposition}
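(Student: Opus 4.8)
The plan is to obtain the congruence subgroup property as a soft consequence of parts (i) and (ii), using the standard device available for any regular branch group. Write $K$ for the branching subgroup from Proposition~\ref{prop:Sunic}, so $K = G'$ when $p$ is odd and $K = \langle [a,b_2],\dots,[a,b_r] \rangle^G$ when $p = 2$ (with $r \ge 2$); in either case $K \trianglelefteq G$, hence also $K' = [K,K] \trianglelefteq G$. Put $m = r+3$ in the odd case and $m = r + n_G + 2$ when $p = 2$, so that parts (i) and (ii) say precisely $\St_G(m) \subseteq K'$. For $n \in \mathbb{N}_0$ set $K_n = \psi_n^{-1}\bigl(K \times \overset{p^n}{\dots} \times K\bigr)$; this is a well-defined finite-index subgroup of $G$, since the regular branch property, iterated, gives $K \times \overset{p^n}{\dots} \times K \subseteq \psi_n(\St_G(n))$, and $K_n \trianglelefteq G$ because $K \trianglelefteq G$ while $G$ permutes the level-$n$ subtrees and acts on each of them through its sections.

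First I would record two reductions. On the one hand, every level-$n$ section of an element of $\St_G(m+n)$ lies in $\St_G(m) \subseteq K'$, whence
\[
  \St_G(m+n) \;\subseteq\; K_n' := \psi_n^{-1}\bigl(K' \times \overset{p^n}{\dots} \times K'\bigr) \qquad \text{for every } n \in \mathbb{N}.
\]
On the other hand, a finite-index subgroup $H \le G$ contains its normal core $\bigcap_{g \in G} H^g$, which is again of finite index; hence it suffices to prove that every finite-index \emph{normal} subgroup $N \trianglelefteq G$ contains some level stabiliser. Combining these, the whole statement reduces to the claim that for each $N \trianglelefteq G$ of finite index $c$ there is an $n$ with $K_n' \subseteq N$, since then $\St_G(m+n) \subseteq N$.

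To produce such an $n$, fix $n$ large enough that $2^{p^n} > c$ and consider $N \cap K_n \trianglelefteq G$, of index at most $c$ in $K_n$. Identify $\psi_n(K_n) = K \times \overset{p^n}{\dots} \times K$ and let $U_v \le K$ denote the projection of $\psi_n(N \cap K_n)$ onto the coordinate indexed by a level-$n$ vertex $v$. Since $G$ is spherically transitive and self-similar, for vertices $v, v'$ at level $n$ there is $g \in G$ taking $v$ to $v'$ whose section at $v$ conjugates $U_v$ onto $U_{v'}$; as conjugation by that section is an automorphism of $K$, all the $U_v$ share a common index $d$ in $K$. From $\psi_n(N \cap K_n) \le \prod_v U_v$ we obtain $d^{p^n} \le \lvert K_n : N \cap K_n \rvert \le c$, so by the choice of $n$ we get $d = 1$, i.e. $U_v = K$ for every $v$. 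Then a one-step commutator trick yields $K_n' \subseteq N$: given a level-$n$ vertex $v$ and $k, k' \in K$, pick $w \in N \cap K_n$ with $v$-coordinate $k$ and let $k'_v = \psi_n^{-1}(1,\dots,k',\dots,1) \in K_n \le G$ be supported only at $v$; then $[w, k'_v] \in [N,G] \subseteq N$ is supported only at $v$ with $v$-coordinate $[k,k']$. Letting $k, k'$ vary gives $\psi_n^{-1}(1,\dots,K',\dots,1) \subseteq N$ for this $v$, and spherical transitivity together with $K' \trianglelefteq G$ and $N \trianglelefteq G$ spreads this to all $v$, so $K_n' = \psi_n^{-1}(K' \times \overset{p^n}{\dots} \times K') \subseteq N$, as required.

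The step I expect to require the most care is the passage ``all coordinate projections $U_v$ are $G$-conjugate, hence of a common index $d$ with $d^{p^n} \le c$'': one must verify that self-similarity genuinely supplies a $G$-element whose section realises the conjugacy between $U_v$ and $U_{v'}$, and, crucially, that it is $\lvert K_n : N \cap K_n \rvert$ (bounded by $c$) rather than $\lvert G : N \rvert$ that governs $d$, so that $2^{p^n} > c$ forces $d = 1$. Everything else uses only the regular branch property, the normality of $K$ in $G$, spherical transitivity and self-similarity — all furnished by Proposition~\ref{prop:Sunic} and Section~\ref{sec:prelim} — together with the containments $\St_G(m) \subseteq K'$ from parts (i) and (ii); note that the excluded pair $(p,r) = (2,1)$ is harmless here, as it lies outside the hypotheses of both (i) and (ii).
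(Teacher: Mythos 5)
Your proposal does not prove the statement: it takes parts (i) and (ii) as given and derives only the final ``in particular'' sentence. But (i) and (ii) \emph{are} the substance of the proposition --- the explicit containments $\St_G(r+3)\subseteq G''$ for $p$ odd and $\St_G(r+n_G+2)\subseteq K'$ for $p=2$ --- and nothing in your argument addresses them. Establishing these requires the specific recursive structure of the \v{S}uni\'{c} generators (for instance that $b_{r-i}\in\St_G(i+1)$, the branching of $G$ over $G'$ resp.\ over $K=\langle [a,b_2],\dots,[a,b_r]\rangle^G$, and, for $p=2$, the existence of the integer $n_G$ with $\langle a,b_1,\dots,b_{r-1}\rangle\subseteq\varphi_{2^{\underline{n}}}(\mathrm{st}_K(2^n))$, which your write-up also takes for granted). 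The paper itself gives no proof of these items: the proposition is quoted verbatim from \cite[Lem.~3.4 and 3.6]{FrGa18}, and that is where the actual work is done; so relative to the full statement your attempt has a genuine gap, namely all of (i) and (ii).

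The part you do prove --- CSP from $\St_G(m)\subseteq K'$ for a regular branch group over $K$ --- is the standard argument and is essentially correct: the reduction to normal subgroups, the bound $d^{\,p^n}\le c<2^{p^n}$ forcing all coordinate projections $U_v$ to equal $K$, and the one-step commutator trick giving $\psi_n^{-1}(K'\times\overset{p^n}{\dots}\times K')\subseteq N$, hence $\St_G(m+n)\subseteq N$. The one point you flag does need the stronger fact that sections of \emph{arbitrary} elements of $G$ (not only of vertex stabilisers, as in the paper's definition of self-similarity) lie in $G$, so that they normalise $K$ and $K'$; this holds for \v{S}uni\'{c} groups because every section of every standard generator already lies in $G$, but it should be said explicitly. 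None of this, however, supplies the quantitative containments that the proposition asserts.
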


\subsection{An effective version of the congruence subgroup property}
Analogous to Proposition~\ref{prop:GGS}(i), we have the following
result for \v{S}uni\'{c} groups $G_{p,f}$ with $p$ odd.

\begin{proposition}\label{prop:Sunic-strong-CSP}
  Let $G=G_{p,f}$ be a \v{S}uni\'{c} group with $p$ odd
  and let $r = \deg(f)$.  Let $N\trianglelefteq G$ be a
 non-trivial normal subgroup and
$m\in \mathbb{N}_0$ maximal such that $N\subseteq \St_G(m)$. Then
  $$G''\times\overset{p^m}\dots\times G'' \subseteq \psi_m([N,G])$$
  and in particular $\St_G(m+r+3)\subseteq [N,G]$.
\end{proposition}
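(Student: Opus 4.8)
The plan is to run the argument of Proposition~\ref{prop:GGS}(i), now with the family $b_1,\dots,b_r$ in place of the single directed generator and with the extra levels that enter through Proposition~\ref{prop:Sunic-CSP}(i). Since $p$ is odd, $G=G_{p,f}$ is regular branch over $G'=[G,G]$ by Proposition~\ref{prop:Sunic}(i), it is super strongly fractal by Proposition~\ref{prop:Sunic-subdirect}, and it has the congruence subgroup property by Proposition~\ref{prop:Sunic-CSP}; moreover $G$ is just infinite with trivial centre (see \cite{Su07}), so that $N$ and $[N,G]$ are automatically finite-index congruence subgroups and $[N,G]\ne 1$. We shall also use two elementary structural facts, both consequences of $\langle b_1,\dots,b_r\rangle$ being elementary abelian (cf.\ \cite{Su07}): first, $G'=\langle[a,b_1],\dots,[a,b_r]\rangle^G$, since $\{a,b_1,\dots,b_r\}$ generates $G$ and the $b_i$ commute pairwise; second, $\St_G(1)'=[\St_G(1),\St_G(1)]\subseteq\gamma_3(G)$, because $\St_G(1)$ is generated by the conjugates $b_i^{\,a^j}$ and, working modulo $\gamma_3(G)$, each commutator $[b_i^{\,a^j},b_k^{\,a^\ell}]$ reduces to $[b_i,b_k]=1$.

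The core step is to show $G'\subseteq\varphi_v([N,G])$, where $v$ denotes the leftmost vertex at level~$m$. Exactly as in the proof of Proposition~\ref{prop:branching}, maximality of $m$ together with the normality of $N$ in the spherically transitive group $G$ gives $N=\St_N(m)$ and $\varphi_v(N)\not\subseteq\St_G(1)$, so that, after replacing a suitable element of $N$ by a power, we may fix $h\in N$ with $\varphi_v(h)=ac$ for some $c\in\St_G(1)$; super strong fractality supplies $g_1,\dots,g_r\in\St_G(m)$ with $\varphi_v(g_i)=b_i$. Then $[h,g_i]\in[N,G]\cap\St_G(m)$ and $\varphi_v([h,g_i])=[ac,b_i]\in\varphi_v([N,G])$ for each~$i$. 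The subgroup $\varphi_v([N,G])$ is normal in $G$ (because $\varphi_v(\St_G(m))=G$) and non-trivial (otherwise $[N,G]$, being contained in $\St_G(m)$, would be trivial), hence of finite index, hence a congruence subgroup; passing to the finite $p$-group $\overline{G}=G/\varphi_v([N,G])$ and using $[ac,b_i]=[a,b_i]^c[c,b_i]$ together with $[c,b_i]\in\St_G(1)'\subseteq\gamma_3(G)$, we find that $\overline{[a,b_i]}$ is conjugate in $\overline{G}$ to an element of $\gamma_3(\overline{G})$. Hence $\overline{G'}=\langle\overline{[a,b_1]},\dots,\overline{[a,b_r]}\rangle^{\overline{G}}\subseteq\gamma_3(\overline{G})=[\overline{G'},\overline{G}]$, and nilpotency of $\overline{G}$ forces $\overline{G'}=1$, that is, $G'\subseteq\varphi_v([N,G])$.

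It remains to propagate this downwards and to convert it into a level-stabiliser statement. Since $G$ is regular branch over $G'$ we have $G'\times 1\times\overset{p^m-1}{\dots}\times 1\subseteq\psi_m(\St_G(m))$: given $x,x'\in G'$, we pick $g_x\in\St_G(m)$ with $\psi_m(g_x)=(x,1,\dots,1)$ and, using $G'\subseteq\varphi_v([N,G])$, an element $\hat x'\in[N,G]$ with $\varphi_v(\hat x')=x'$; then $[\hat x',g_x]\in[N,G]\cap\St_G(m)$ and $\psi_m([\hat x',g_x])=([x',x],1,\dots,1)$, whence $G''\times 1\times\overset{p^m-1}{\dots}\times 1\subseteq\psi_m([N,G])$. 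Conjugating by elements of $G$ that permute the level-$m$ vertices transitively, and using $G''\trianglelefteq G$, this spreads to $G''\times\overset{p^m}{\dots}\times G''\subseteq\psi_m([N,G])$, which is the first assertion. For the second, every $g\in\St_G(m+r+3)$ has all of its level-$m$ sections in $\St_G(r+3)$, which lies in $G''$ by Proposition~\ref{prop:Sunic-CSP}(i); hence $\psi_m(\St_G(m+r+3))\subseteq G''\times\overset{p^m}{\dots}\times G''\subseteq\psi_m([N,G])$, and the injectivity of $\psi_m$ on $\St_G(m)$ yields $\St_G(m+r+3)\subseteq[N,G]$.

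We expect the genuine obstacle to be the bootstrap in the second paragraph: the conclusion there hinges on the correction term $[c,b_i]$ landing in $\gamma_3(G)$ rather than merely in $G'$, and this is precisely where the commutativity of the directed generators $b_1,\dots,b_r$ is used (it gives both $\St_G(1)'\subseteq\gamma_3(G)$ and $G'=\langle[a,b_1],\dots,[a,b_r]\rangle^G$). The remaining steps are straightforward analogues of the GGS case, with Proposition~\ref{prop:Sunic-CSP}(i) supplying the bound $\St_G(r+3)\subseteq G''$ that here plays the role of the inclusion $\St_G(2)\subseteq\gamma_3(G)$ used there.
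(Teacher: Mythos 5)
Your proof is correct and takes essentially the same route as the paper, which offers no separate argument but merely declares the result ``analogous to Proposition~\ref{prop:GGS}(i)'': you carry out exactly that adaptation, using the commuting directed generators to get $G'=\langle [a,b_1],\dots,[a,b_r]\rangle^G$ and $\St_G(1)'\subseteq\gamma_3(G)$, then branching over $G'$ to obtain $G''\times\overset{p^m}{\dots}\times G''\subseteq\psi_m([N,G])$, and Proposition~\ref{prop:Sunic-CSP}(i) for the bound $\St_G(m+r+3)\subseteq[N,G]$.
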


Recall from Proposition~\ref{prop:Sunic} that
  \v{S}uni\'{c} groups acting on the $2$-adic tree are
  typically regular branch. From
  Remark~\ref{rmk:CSP-bound}, we immediately obtain the following.

\begin{proposition}\label{prop:Sunic-2-CSP}
  Let $G=G_{2,f}$ be a regular branch \v{S}uni\'{c} group
 such that $r = \deg(f) \ge 2$.  Let $N\trianglelefteq G$ be a
 non-trivial normal subgroup and
  $m\in \mathbb{N}_0$ maximal such that $N\subseteq \St_G(m)$.  Then
  \[
    K''\times\overset{2^{m+1}}\dots\times K'' \subseteq
    \psi_{m+1}(\St_{[N,G]}(m+1))
  \]
  and in particular $\St_G(m+n+r+3)\subseteq [N,G]$, where
 $K=\langle [a,b_2],\dots,[a,b_r]\rangle^G$ and
    $n = n_G$ is such that
  $\langle a, b_1,\dots,b_{r-1}\rangle\subseteq
    \varphi_{2^{\underline{n}}}(\mathrm{st}_K(2^n))$.
\end{proposition}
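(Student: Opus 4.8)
The plan is to imitate the argument of Proposition~\ref{prop:GGS}(i), which dealt with GGS-groups that are regular branch over $G'$, but now replace the branching subgroup $G'$ with $K=\langle [a,b_2],\dots,[a,b_r]\rangle^G$, over which $G=G_{2,f}$ is regular branch by Proposition~\ref{prop:Sunic}(ii). Since $\deg(f)\ge 2$, the group $G$ has the congruence subgroup property (Proposition~\ref{prop:Sunic-CSP}) and is super strongly fractal (Proposition~\ref{prop:Sunic-super-strongly-fractal-2}). First I would invoke the congruence subgroup property to fix $\ell\in\mathbb{N}$ with $\St_G(\ell)\subseteq N$; together with the maximality of $m$ and the spherical transitivity of $G$, this gives $N=\St_N(m)$ and $\varphi_v(N)\not\subseteq\St_G(1)$ for every vertex $v$ at level~$m$.

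Next, let $v$ be the leftmost vertex at level~$m$. As in the proof of Proposition~\ref{prop:branching}, super strong fractality lets me pick $h\in\St_N(m)=N$ with $\varphi_v(h)=ac$ for some $c\in\St_G(1)$, and elements $g_1,\dots,g_r\in\St_G(m)$ with $\varphi_v(g_i)=b_i$. Setting $M=\langle [h,g_1],\dots,[h,g_r]\rangle^H\subseteq[N,G]$ with $H=\langle h,g_1,\dots,g_r\rangle$, I get $\varphi_v(M)=\langle[ac,b_1],\dots,[ac,b_r]\rangle^L$ where $L=\langle ac,b_1,\dots,b_r\rangle$. Working modulo $\St_G(1)'$, the commutators $[ac,b_i]$ reduce to $[a,b_i]$, and hence $\varphi_v(M)$ surjects onto $K/(K\cap\St_G(1)')$ inside $G$; combined with the fact that $\psi(K)$ sits subdirectly in $G\times\overset{2}\dots\times G$ (this should follow from the definition of $K$ together with $\psi(\St_G(1)')\subseteq G'\times G'$, analogously to Proposition~\ref{prop:EGS-subdirect}), I conclude that for every $z\in K$ there is $\hat z\in\St_{[N,G]}(m+1)$ with $\varphi_u(\hat z)\equiv z$ modulo $\St_G(1)'$, where $u$ is the leftmost vertex at level $m+1$. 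Since $G$ is regular branch over $K$, we have $K\times 1\times\overset{2^{m+1}-1}\dots\times 1\subseteq\psi_{m+1}(\St_G(m+1))$, so forming commutators of such $\hat z$'s and using super strong fractality once more gives $K'\times 1\times\overset{2^{m+1}-1}\dots\times 1\subseteq\psi_{m+1}([N,G])$; repeating the commutator trick yields $K''\times 1\times\overset{2^{m+1}-1}\dots\times 1\subseteq\psi_{m+1}(\St_{[N,G]}(m+1))$. Spherical transitivity and normality of $[N,G]$ then upgrade this to $K''\times\overset{2^{m+1}}\dots\times K''\subseteq\psi_{m+1}(\St_{[N,G]}(m+1))$.

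For the final inclusion, Proposition~\ref{prop:Sunic-CSP}(ii) gives $\St_G(n+r+2)\subseteq K'$, so $\St_G(n+r+2)\times\overset{2^{m+1}}\dots\times\St_G(n+r+2)\subseteq\psi_{m+1}(\St_{[N,G]}(m+1))$ after a further step down (applying the same bound one derived-length deeper, i.e. $\St_G(n+r+2)\subseteq K'$ and hence, inside each $K''$-factor, we still capture enough); more cleanly, since $\St_G(n+r+2)\subseteq K'$ and $K'$ is regular branch data we get $\psi_{m+1}^{-1}(\St_G(\text{something})\times\cdots)\subseteq[N,G]$. Translating through $\psi_{m+1}$ and using $\St_G(m+1+k)=\psi_{m+1}^{-1}(\St_G(k)\times\overset{2^{m+1}}\dots\times\St_G(k))$, the bound $\St_G(n+r+2)\subseteq K'$ yields $\St_G(m+n+r+3)\subseteq[N,G]$. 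Alternatively, and most transparently, the proposition text already notes this is an immediate consequence of Remark~\ref{rmk:CSP-bound} applied to the branching subgroup $K$: there, $\St_G(k)\subseteq K'$ with $k=n+r+2$ by Proposition~\ref{prop:Sunic-CSP}(ii), so $\St_G(m+k+1)=\St_G(m+n+r+3)\subseteq[N,G]$, and the displayed $K''$-statement is precisely $\psi_{m+1}^{-1}(K'\times\overset{2^{m+1}}\dots\times K')\subseteq[N,G]$ composed with $K'\times\overset{2}\dots\times K'\subseteq\psi(\St_K(1))\subseteq\psi(K)$ one level further, i.e. $K''\times\cdots\times K''\subseteq\psi_{m+1}(\St_{[N,G]}(m+1))$. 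The main obstacle I anticipate is verifying carefully that $\psi(K)$ is subdirect in $G\times\overset{2}\dots\times G$ and that the $\St_G(1)'$-reduction legitimately identifies $\varphi_v(M)$ with all of $K$ modulo $\St_G(1)'$; this uses the specific recursive shape of $\psi(b_r)=(a,1,\dots,1,b_1^{-\alpha_0}\cdots b_r^{-\alpha_{r-1}})$ and should be checked against the computations already carried out in the proof of Proposition~\ref{prop:Sunic-super-strongly-fractal-2}.
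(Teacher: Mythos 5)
Your closing ``alternative'' derivation is exactly the paper's proof: the paper gives no separate argument for this proposition beyond the sentence introducing it, namely that it is immediate from Remark~\ref{rmk:CSP-bound} applied with branching subgroup $K$ (Proposition~\ref{prop:Sunic}(ii)), combined with Proposition~\ref{prop:Sunic-CSP}(ii). Indeed, the Remark yields $\psi_{m+1}^{-1}(K'\times\overset{2^{m+1}}{\dots}\times K')\subseteq [N,G]$; since $G$ is regular branch over $K$, every tuple in $K'\times\overset{2^{m+1}}{\dots}\times K'$ is $\psi_{m+1}$ of an element of $\St_G(m+1)$, so this already gives $K'\times\overset{2^{m+1}}{\dots}\times K'\subseteq \psi_{m+1}(\St_{[N,G]}(m+1))$, of which the displayed $K''$-containment is a weaker consequence; and with $k=n+r+2$ and $\St_G(k)\subseteq K'$ one gets $\St_G(m+k+1)=\St_G(m+n+r+3)\subseteq[N,G]$. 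For this last step only the inclusion $\St_G(m+1+k)\subseteq\psi_{m+1}^{-1}\bigl(\St_G(k)\times\overset{2^{m+1}}{\dots}\times\St_G(k)\bigr)$, which follows from self-similarity, is needed --- not the equality you wrote. Note also that this route uses neither the congruence subgroup property, nor super strong fractality, nor any subdirectness statement.

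By contrast, your primary argument, modelled on Propositions~\ref{prop:branching} and~\ref{prop:GGS}(i), is not what the paper does and has a genuine gap exactly where you flag it: the assertion that $\psi(K)$ is subdirect in $G\times G$ is nowhere established for the $p=2$ \v{S}uni\'{c} groups (Proposition~\ref{prop:Sunic-subdirect} concerns odd $p$ and the subgroup $G'$), and your indication that it ``should follow'' from $\psi(\St_G(1)')\subseteq G'\times G'$ is not an argument. Moreover, the paragraph deriving the final inclusion ``one derived-length deeper'' is muddled: as written, obtaining $\St_G(m+n+r+3)\subseteq[N,G]$ from the displayed statement would require $K'$-components, not $K''$-components, in $\psi_{m+1}([N,G])$, which is precisely what the Remark supplies with no extra structural input. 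I would therefore drop the first route entirely and keep only the Remark-based argument, which is complete and coincides with the paper's.
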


\subsection{Normal subgroups}

In the following we make use of the notation set up in
Section~\ref{sec:chain}.

\begin{lemma}\label{lem:general-action-of-b-Sunic}
 Let $G = G_{p,f}$ be a \v{S}uni\'{c} group and let
    $r = \deg(f)$.  For $m \in \mathbb{N}$ and
  $\mathbf{j} \in J_m$ with $\mathbf{j} \ne (1,\dots,1)$,
  let
  $v \in V_{\mathbf{j} \boxplus (1)} \smallsetminus V_{\mathbf{j}^-
    \boxplus (p)}$.  Then there is an element
  $c\in \langle b_1,\dots,b_r\rangle$ such that
  \[
    v(c-1) \in V_{\mathbf{j}^- \boxplus (p)}\smallsetminus
    V_{\mathbf{j}^- \boxplus (p-1)}.
  \]
\end{lemma}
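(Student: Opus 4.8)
\textit{Setting up the reduction.} The plan is to recast the conclusion as the non‑vanishing of a homomorphism and then induct on $m$. Given $v=(v^{(1)},\dots,v^{(p)})\in V_{\mathbf{j}\boxplus(1)}\subseteq W_{m+1}=W_m^{\oplus p}$, each $v^{(\ell)}$ lies in $V_\mathbf{j}$ and all agree with $v_1:=v^{(1)}\in V_\mathbf{j}\smallsetminus V_{\mathbf{j}^-}$ modulo $V_{\mathbf{j}^-}$. For $c\in\langle b_1,\dots,b_r\rangle\subseteq\St_S(1)$, writing $\psi(c)=(c_1,\dots,c_p)$ with $c_\ell\in G$, we have $v(c-1)=\bigl(v^{(\ell)}(c_\ell-1)\bigr)_\ell$; since $V_\mathbf{j}/V_{\mathbf{j}^-}\cong W_0$ is trivial, each $v^{(\ell)}(c_\ell-1)\in V_{\mathbf{j}^-}$, so $v(c-1)\in V_{\mathbf{j}^-\boxplus(p)}$ automatically. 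As $V_{\mathbf{j}^-\boxplus(p-1)}$ is precisely the coordinate‑sum‑zero submodule of $V_{\mathbf{j}^-\boxplus(p)}=V_{\mathbf{j}^-}^{\oplus p}$, and since $v^{(\ell)}\equiv v_1$ together with $V_{\mathbf{j}^-}(g-1)\subseteq V_{\mathbf{j}^{--}}$ for all $g\in G$, the lemma is equivalent to: there is $c$ with $\sum_\ell v_1(c_\ell-1)\notin V_{\mathbf{j}^{--}}$. Letting $\phi\colon G\to V_{\mathbf{j}^-}/V_{\mathbf{j}^{--}}\cong\mathbb{F}_p$ be the homomorphism $g\mapsto v_1(g-1)+V_{\mathbf{j}^{--}}$ and $\Phi\colon\langle b_1,\dots,b_r\rangle\to\mathbb{F}_p$ the homomorphism $c\mapsto\sum_\ell\phi(\psi(c)_\ell)$, this says exactly $\Phi\ne 0$; in particular $c$ may then be chosen among $b_1,\dots,b_r$.

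\textit{The structure of $\Phi$ and the base case.} From the recursion defining $G$ one gets $\Phi(b_k)=\phi(b_{k+1})$ for $1\le k\le r-1$ and $\Phi(b_r)=\phi(a)+\phi(d)$ with $d=b_1^{-\alpha_0}\cdots b_r^{-\alpha_{r-1}}$, so $\phi(d)=-\sum_k\alpha_{k-1}\phi(b_k)$. Hence $\Phi=0$ if and only if $\phi(b_2)=\cdots=\phi(b_r)=0$ and $\phi(a)=\alpha_0\phi(b_1)$. For $m=1$ (which forces $j_1\ge 2$) this is impossible: the directed generators act trivially on $W_1$, so all $\phi(b_k)=0$, whereas $\phi(a)\ne 0$ because on $W_1$ multiplication by $a-1$ sends $V_{(j_1)}\smallsetminus V_{(j_1-1)}$ into $V_{(j_1-1)}\smallsetminus V_{(j_1-2)}=V_{\mathbf{j}^-}\smallsetminus V_{\mathbf{j}^{--}}$; thus $\Phi(b_r)=\phi(a)\ne 0$.

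\textit{The inductive step.} For $m\ge 2$ write $\mathbf{j}=\mathbf{j}'\boxplus(j_m)$ with $\mathbf{j}'\in J_{m-1}$ and view $v_1=(v_1^{(1)},\dots,v_1^{(p)})\in V_\mathbf{j}\subseteq V_{\mathbf{j}'}^{\oplus p}=W_m$. If $j_m\ge 2$, then for every $k$ a short computation from $\psi(b_k)$ — using that $G$ acts trivially on the composition factor $V_{\mathbf{j}'}/V_{(\mathbf{j}')^-}\cong W_0$ — gives $v_1(b_k-1)\in V_{(\mathbf{j}')^-}^{\oplus p}=V_{(\mathbf{j}')^-\boxplus(p)}\subseteq V_{\mathbf{j}^{--}}$, so $\phi(b_k)=0$; moreover, passing to $W_1\cong V_{\mathbf{j}'}^{\oplus p}/V_{(\mathbf{j}')^-}^{\oplus p}$, the image of $v_1$ lies in $V_{(j_m)}\smallsetminus V_{(j_m-1)}$, hence that of $v_1(a-1)$ lies in $V_{(j_m-1)}\smallsetminus V_{(j_m-2)}$, forcing $v_1(a-1)\notin V_{\mathbf{j}^{--}}$, i.e.\ $\phi(a)\ne 0$; so $\Phi(b_r)=\phi(a)\ne 0$. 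If instead $j_m=1$, then $\mathbf{j}^-=(\mathbf{j}')^-\boxplus(p)$, $\mathbf{j}^{--}=(\mathbf{j}')^-\boxplus(p-1)$ and the classes $\bar v_1^{(\ell)}$ all coincide and are nonzero, so $v_1^{(p)}\in V_{\mathbf{j}'}\smallsetminus V_{(\mathbf{j}')^-}$; introducing $\phi'\colon G\to V_{(\mathbf{j}')^-}/V_{(\mathbf{j}')^{--}}\cong\mathbb{F}_p$, $g\mapsto v_1^{(p)}(g-1)+V_{(\mathbf{j}')^{--}}$, and using the coordinate‑sum identification $V_{\mathbf{j}^-}/V_{\mathbf{j}^{--}}\cong V_{(\mathbf{j}')^-}/V_{(\mathbf{j}')^{--}}$, one computes $\phi(a)=0$, $\phi(b_k)=\phi'(b_{k+1})$ for $k<r$, and $\phi(b_r)=\phi'(a)+\phi'(d)$. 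With $\Phi'(c)=\sum_\ell\phi'(\psi(c)_\ell)$ these identities yield $\Phi=0\iff\Phi'=0$; but $\Phi'\ne 0$ is exactly the content of the lemma at level $m-1$ for the index $\mathbf{j}'$, applied to the element $(v_1^{(p)},\dots,v_1^{(p)})\in V_{\mathbf{j}'\boxplus(1)}\smallsetminus V_{(\mathbf{j}')^-\boxplus(p)}$, so the inductive hypothesis gives $\Phi\ne 0$, as required.

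\textit{Main obstacle.} The real work — and the only place where care is genuinely needed — is the bookkeeping that identifies $\mathbf{j}^{--}$ and the modules $V_{\bullet\boxplus\bullet}$ explicitly: one must handle the predecessor operation and the lexicographic order correctly in all three cases (including the degenerate possibilities $(\mathbf{j}')^-$ or $\mathbf{j}^{--}$ equal to $\{0\}$, and $\mathbf{j}^-=(1,\dots,1)$), and verify the homomorphism identities relating $\phi$, $\Phi$, $\phi'$, $\Phi'$ — all elementary, but resting on the explicit $\mathbb{F}_p[a]/((a-1)^p)$‑module picture of $W_1$.
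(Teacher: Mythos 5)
Your proof is correct, and in the crucial case it takes a genuinely different route from the paper. The initial reduction (writing $v$ as $(v_1,\dots,v_1)$ modulo $V_{\mathbf{j}^-\boxplus(p)}$ and translating the conclusion into the non-vanishing, modulo $V_{\mathbf{j}^{--}}$, of the coordinate sum $\sum_\ell v_1(\psi(c)_\ell-1)$) and the case $j_m\ge 2$, where $b_r$ works because $\phi(a)\ne 0$ and all $\phi(b_k)=0$, coincide in substance with the paper's argument. The difference is in the case of trailing $1$'s. The paper argues directly: it splits into the non-torsion case (where \cite[Prop.~9]{Su07} supplies a directed element with non-trivial sections along the spine at every level) and the torsion case, where a careful analysis of the sections $\varphi_{p\dots p1}(b_r)$, the shift $\varphi_p(b_j)=b_{j+1}$ and the bound $\mu-\eta\le r-2$ is needed to locate a specific generator $b_j$ whose section at depth $m-k$ is a non-trivial power of~$a$. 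You instead encode the data in the $\mathbb{F}_p$-valued homomorphisms $\phi$ and $\Phi$ and peel off one trailing $1$ at a time by induction on $m$: the recursion $\psi(b_k)=(1,\dots,1,b_{k+1})$, $\psi(b_r)=(a,1,\dots,1,d)$ yields $\Phi(b_k)=\Phi'(b_{k+1})$ for $k<r$ and $\Phi(b_r)=-\sum_k\alpha_{k-1}\Phi'(b_k)$, so the vector $(\Phi(b_1),\dots,\Phi(b_r))$ is obtained from $(\Phi'(b_1),\dots,\Phi'(b_r))$ by an invertible companion-type matrix (invertibility being exactly $\alpha_0\ne 0$), and non-vanishing propagates without ever identifying which generator succeeds. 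This makes the argument uniform in $p$ and removes the torsion/non-torsion dichotomy, at the price of the bookkeeping you flag (the identifications $V_{\mathbf{j}^-}/V_{\mathbf{j}^{--}}\cong V_{(\mathbf{j}')^-}/V_{(\mathbf{j}')^{--}}$ via coordinate sums, the degenerate indices $(0,p,\dots,p)$, and the check that $\phi$, $\Phi$ are homomorphisms because all one-dimensional subquotients are trivial), all of which you handle or which are routine. One small wording caveat: $V_{\mathbf{j}^-\boxplus(p-1)}$ is not the literal coordinate-sum-zero subspace of $V_{\mathbf{j}^-}^{\oplus p}$ but the preimage of $V_{\mathbf{j}^{--}}$ under the sum map; your subsequent criterion ``$\sum_\ell v_1(c_\ell-1)\notin V_{\mathbf{j}^{--}}$'' is the correct one and is what you actually use.
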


\begin{proof}
  Modulo $V_{\mathbf{j}^- \boxplus (p)}$, we may write
  $v \in V_{\mathbf{j} \boxplus (1)} \le V_{\mathbf{j}} \vert^{\oplus
    p}_\psi$ as $(v_1,\dots,v_1)$ with
  $v_1 \in V_\mathbf{j} \smallsetminus V_{\mathbf{j}^-}$.  From
  $V_{\mathbf{j}^- \boxplus (p)} (a-1) \subseteq V_{\mathbf{j}^-
    \boxplus (p-1)}$ and $(v_1,\dots,v_1) (a-1) = (0,\dots,0)$ we
  deduce that $v(a-1) \in V_{\mathbf{j}^- \boxplus (p-1)}$.  Similarly
  $V_{\mathbf{j}^- \boxplus (p)} (b-1) \subseteq V_{\mathbf{j}^-
    \boxplus (p-1)}$ for all $b\in \langle b_1,\dots,b_r\rangle$
and, in particular, we deduce that
  \[
    v(b_r-1) \equiv \big( v_1 (a-1), 1,\dots, 1, v_1
    (b_1^{-\alpha_0}\cdots b_r^{-\alpha_{r-1}}-1) \big) \qquad
    \mathrm{modulo} \quad V_{\mathbf{j}^- \boxplus (p-1)}.
  \]
  We write $\mathbf{j}=(j_1,\dots,j_m)$  and
    let $k \in \{ 1, \ldots, m \}$ be maximal with $j_k > 1$ so that
    \[
      \mathbf{j} = \mathbf{i} \boxplus (1,\overset{m-k}{\dots},1)
      \qquad \text{with} \quad \mathbf{i} = (j_1, \dots, j_k).
    \]
    If $k=m$, that is $j_m>1$, then
  $v_1 (a-1)\in V_{\mathbf{j}^- } \smallsetminus V_{\mathbf{j}^{--} }$
  and
  $v_1 (b_1^{-\alpha_0}\cdots b_r^{-\alpha_{r-1}}-1) \in
  V_{\mathbf{j}^{--}}$ imply that
  $ v(b_r-1)\in V_{\mathbf{j}^- \boxplus (p)} \smallsetminus
  V_{\mathbf{j}^- \boxplus (p-1)} $.
  
  Suppose now that $1 \le k < m$.  If $G$ is
  non-torsion, the equivalence `(i) $\Leftrightarrow$ (iii)' in
  \cite[Prop.~9]{Su07}  and \cite[Def.~2]{Su07} show that
  there is an element $b\in \langle b_1,\dots,b_r\rangle$ such that
  $\varphi_{p\overset{\,\ell-1\,}{\dots}p1}(b)\ne 1$ for all
  $\ell\in\mathbb{N}$ and, as in the proof of
  Proposition~\ref{pro:submodules-of-Wm}, we conclude
    that
  \[
    v (b-1)\in V_{\mathbf{j}^- \boxplus (p)}\smallsetminus
    V_{\mathbf{j}^- \boxplus (p-1)}.
  \]
It remains to consider the case when $G$ is a torsion
  group.  We observe that
  \[
    \mathbf{j}^- = \mathbf{i}^- \boxplus (p,\overset{m-k}\dots,p),
    \qquad \text{where} \quad\mathbf{i}^- =
    (j_1,\dots,j_{k-1},j_k-1),
  \]
  and that $v_1(a-1)\in V_{\mathbf{j}^{--}}$, as in the proof of
  Proposition~\ref{pro:submodules-of-Wm}.  Recursively, we supplement
  the original elements $v$ and $v_1$ by a sequence
  \[
    v_i\in V_{\mathbf{i} \boxplus (1,\overset{m-k+1-i}\ldots,1)}
    \smallsetminus
    V_{\mathbf{i}^- \boxplus (p,\overset{m-k+1-i}\ldots,p)},
    \quad \text{$2 \le i \le m-k+1$,}
  \]
  such that, for $1 \le i \le m-k$, the elements $v_i$ and
  $(v_{i+1},\ldots,v_{i+1})$ are congruent modulo
  $V_{\mathbf{i}^- \boxplus (p,\overset{m-k+1-i}\ldots,p)}$ and, in particular,
  \[
    v_i(a-1) \in V_{\mathbf{i}^- \boxplus
      (p,\overset{m-k-i}\ldots,p,p-1)} \qquad \text{for
      $1 \le i \le m-k$.}
  \]
  
  Moreover, we observe that
  \[
    v_{m-k+1}(a-1) \in V_{\mathbf{i}^-}\smallsetminus
    V_{\mathbf{i}^{--}}, \qquad  v_{m-k+1}(b-1) \in
    V_{\mathbf{i}^{--}} \quad \text{for $b \in \langle b_1, \dots, b_r \rangle$} 
  \]
  and
  \[
    v_i(\varphi_{p^{\underline{i}}}(b_r)-1) \in V_{\mathbf{i}^-
      \boxplus (p,\overset{m-k+1-i}\ldots,p)}\qquad \text{for
      $1 \le i \le m-k$.}
  \]
  Thus, if $\varphi_{p\overset{\,m-k\,}\dots p1}(b_r) \ne 1$ generates
  $\langle a \rangle$, we deduce from the recursive description of
  $b_r$ that
  \[
    v(b_r-1)\in V_{\mathbf{j}^- \boxplus (p)}\smallsetminus
    V_{\mathbf{j}^- \boxplus (p-1)}.
  \]
  Finally, we suppose that
  $\varphi_{p\overset{\,m-k\,}\dots p1}(b_r) =1$.  To conclude the
  proof it suffices to produce a $j \in \{1, \ldots, r-1\}$ such that
  $\varphi_{p\overset{\,m-k\,}\dots p1}(b_j) \ne 1$, for then $b_j$
  can be used in place of~$b_r$ in the previous argument.

  From $\varphi_1(b_r)=a \ne 1$ and
  $\varphi_{p\overset{\,m-k\,}\dots p1}(b_r) =1$ we deduce that there
  is a largest integer  $\eta$ such that  
 \[
   1 \le \eta \le m-k, \qquad \varphi_{p\overset{\,\eta-1\,}\dots
     p1}(b_r) \in \langle a \rangle \smallsetminus \{ 1 \}
   \qquad\text{and}\qquad \varphi_{p\overset{\,\eta\,}\dots
     p1\,}(b_r)=1.
  \]
  It is a general feature of \v{S}uni\'{c} groups that
  $\varphi_{p\overset{\,\ell\,}\dots p1}(b_r)\ne 1$ for infinitely
  many $\ell \in \mathbb{N}$.  Let $\mu \ge \eta$ be such that
  \[
    \varphi_{p\overset{\,\eta\,}\dots
      p1}(b_r)=\varphi_{p\overset{\,\eta+1\,}\dots p1}(b_r) = \dots =
    \varphi_{p\overset{\,\mu\,}\dots
      p1}(b_r)=1\qquad\text{and}\qquad
    \varphi_{p\overset{\,\mu+1\,}\dots p1}(b_r)\ne 1.
  \]
  From our set-up we see that each of the elements
  \[
    x_0 = \varphi_{p\overset{\,\eta\,}\dots p}(b_r), \quad x_1 =
    \varphi_p(x_0) = \varphi_{p\overset{\,\eta+1\,}\dots p}(b_r),\quad
    \dots, \quad x_{\mu-\eta} = \varphi_p (x_{\mu-\eta-1}) =
    \varphi_{p\overset{\,\mu\,}\dots p}(b_r)
  \]
  lies in $\langle b_1,\ldots,b_{r-1}\rangle$, whereas
  $\varphi_p (x_{\mu-\eta}) = \varphi_{p\overset{\,\mu+1\,}\dots
    p}(b_r) \in \langle b_1,\ldots,b_{r}\rangle\smallsetminus \langle
  b_1,\ldots,b_{r-1}\rangle$.  Since $\varphi_p(b_j) = b_{j+1}$ for
  $j\in\{1,\dots,r-1\}$, it follows that
  \[
    x_\ell \in \langle b_1, \dots, b_{(r-1)-(\mu-\eta)+\ell} \rangle
    \smallsetminus \langle b_1, \dots,
    b_{(r-2)-(\mu-\eta)+\ell}\rangle \qquad \text{for
      $0 \le \ell \le \mu-\eta$,}
  \]
  and, taking $\ell=0$, we deduce that $\mu-\eta \le r-2$.  Using
  $ \varphi_{p\overset{\,m-k\,}\dots p1}(b_r)= 1$ we conclude that
  $(m-k)-\eta \le \mu - \eta \le r-2$.  This shows that
  $j = (r-1) - (m-k) +\eta \in \{1, \ldots, r-1\}$ satisfies the
  requirement
  \[
    \varphi_{p\overset{\,m-k\,}\dots p1}(b_j) =
    \varphi_{p\overset{\,m-k\,}\dots p1}(b_{r-((m-k)-(\eta-1))}) =
    \varphi_{p\overset{\,\eta-1\,}\dots p1}(b_r) \ne 1. \qedhere
  \]
\end{proof}

Using Lemma~\ref{lem:general-action-of-b-Sunic} and an
  argument similar to the proof of
  Proposition~\ref{pro:submodules-of-Wm}, we obtain the following
consequence.

\begin{proposition} 
  Let $G = G_{p,f}$ be a \v{S}uni\'{c} group, and let
    $m \in \mathbb{N}$. Then the modules $V_\mathbf{j}$,
  $\mathbf{j} \in J_m$, are precisely the non-trivial submodules of
  the $\mathbb{F}_pG$-module $W_m$.

  They form a descending chain, with $V_{\mathbf{j}^-}$ being the
  unique maximal submodule of $V_\mathbf{j}$ and
  $V_\mathbf{j}/V_{\mathbf{j}^-} \cong W_0 = \mathbb{F}_p$ for each
  $\mathbf{j} \in J_m$.  In particular, every
  $\mathbb{F}_pG$-submodule of $W_m$ is cyclic.
\end{proposition}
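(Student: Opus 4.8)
The plan is to run the proof of Proposition~\ref{pro:submodules-of-Wm} once more, substituting Lemma~\ref{lem:general-action-of-b-Sunic} for the GGS-specific computation at its core. As recorded in Definition~\ref{def:W1-and-submodules} and at the start of the proof of Proposition~\ref{pro:submodules-of-Wm}, the modules $V_\mathbf{j}$, $\mathbf{j}\in J_m$, form a descending chain in which $V_{\mathbf{j}^-}$ is a maximal submodule of $V_\mathbf{j}$ with $V_\mathbf{j}/V_{\mathbf{j}^-}\cong W_0=\mathbb{F}_p$; this depends only on the twisted-direct-sum construction and therefore holds for the \v{S}uni\'{c} group $G$ at hand. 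Hence
\[
  W_m = V_{(p,\dots,p)} \supsetneq \cdots \supsetneq V_{(1,\dots,1)}
  \supsetneq V_{(0,p,\dots,p)} = \{0\}
\]
is a composition series of length $p^m = \dim_{\mathbb{F}_p} W_m$, and the proposition is equivalent to the assertion that $W_m$ is uniserial, i.e.\ that $V_{\mathbf{j}^-}$ is the \emph{unique} maximal submodule of $V_\mathbf{j}$ for every $\mathbf{j}\in J_m$. Since $V_\mathbf{j}/V_{\mathbf{j}^-}$ is simple, this reduces to showing that
\[
  \langle v \rangle_{\mathbb{F}_pG} = V_\mathbf{j} \qquad
  \text{for every $\mathbf{j}\in J_m$ and every
    $v \in V_\mathbf{j}\smallsetminus V_{\mathbf{j}^-}$.}
\]

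I would prove this by induction on $\mathbf{j}$ with respect to the lexicographic order on $J_m$, the base case being $\mathbf{j}=(1,\dots,1)$: there $V_{\mathbf{j}^-}=\{0\}$ and $V_{(1,\dots,1)}$ is one-dimensional by construction, so every nonzero element generates it. For the inductive step, let $\mathbf{j}=(j_1,\dots,j_m)\in J_m$ with $\mathbf{j}\ne(1,\dots,1)$ and $v\in V_\mathbf{j}\smallsetminus V_{\mathbf{j}^-}$; it suffices to produce an element $x$ of the group algebra $\mathbb{F}_pG$ with $vx\in V_{\mathbf{j}^-}\smallsetminus V_{\mathbf{j}^{--}}$. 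Indeed, the induction hypothesis applied to $\mathbf{j}^-$ then gives $\langle vx\rangle_{\mathbb{F}_pG}=V_{\mathbf{j}^-}$, so $\langle v\rangle_{\mathbb{F}_pG}\supseteq V_{\mathbf{j}^-}$; as $v\notin V_{\mathbf{j}^-}$ and $V_\mathbf{j}/V_{\mathbf{j}^-}$ is simple, this forces $\langle v\rangle_{\mathbb{F}_pG}=V_\mathbf{j}$.

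To find $x$ I would distinguish two cases according to the last entry $j_m$, exactly as in the proof of Proposition~\ref{pro:submodules-of-Wm}. If $j_m\ne 1$, then the quotient $V_{\mathbf{j}'\boxplus(p)}/V_{(\mathbf{j}')^-\boxplus(p)}\cong W_1$ is the fully understood module from Definition~\ref{def:W1-and-submodules}, and inside it the operator $a-1$ carries $V_{(j_m)}\smallsetminus V_{(j_m-1)}$ into $V_{(j_m-1)}\smallsetminus V_{(j_m-2)}$; translating back, $v(a-1)\in V_{\mathbf{j}^-}\smallsetminus V_{\mathbf{j}^{--}}$, so $x=a-1$ works. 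If $j_m=1$, then necessarily $m\ge 2$; write $\mathbf{j}=\mathbf{k}\boxplus(1)$ with $\mathbf{k}=(j_1,\dots,j_{m-1})\in J_{m-1}$. Then $\mathbf{k}\ne(1,\dots,1)$ since $\mathbf{j}\ne(1,\dots,1)$, and from the definition of the predecessor one has $\mathbf{j}^-=\mathbf{k}^-\boxplus(p)$ and $\mathbf{j}^{--}=\mathbf{k}^-\boxplus(p-1)$. Applying Lemma~\ref{lem:general-action-of-b-Sunic} with $\mathbf{k}$ in the role of $\mathbf{j}$ and our $v\in V_{\mathbf{k}\boxplus(1)}\smallsetminus V_{\mathbf{k}^-\boxplus(p)}$ yields an element $c\in\langle b_1,\dots,b_r\rangle$ with $v(c-1)\in V_{\mathbf{k}^-\boxplus(p)}\smallsetminus V_{\mathbf{k}^-\boxplus(p-1)}=V_{\mathbf{j}^-}\smallsetminus V_{\mathbf{j}^{--}}$, so $x=c-1$ works. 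This completes the induction; the cyclicity of every submodule follows since $V_\mathbf{j}$ is then generated by any of its elements lying outside $V_{\mathbf{j}^-}$.

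The only point requiring genuine care is the bookkeeping of the predecessor maps: Lemma~\ref{lem:general-action-of-b-Sunic} is stated for $v\in V_{\mathbf{j}\boxplus(1)}$ with $\mathbf{j}\in J_m$, whereas inside $W_m$ one encounters it only after the index shift $\mathbf{j}\mapsto\mathbf{k}=\mathbf{j}'$, and one must verify that the degenerate situations ($m=1$; $\mathbf{j}=(1,\dots,1)$; $\mathbf{j}^-$ or $\mathbf{j}^{--}$ coinciding with the formal label $(0,p,\dots,p)$) cause no difficulty. All the substantive content — in particular the case split between torsion and non-torsion \v{S}uni\'{c} groups and the input from~\cite{Su07} — has already been absorbed into the proof of Lemma~\ref{lem:general-action-of-b-Sunic}.
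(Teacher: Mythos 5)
Your proposal is correct and follows essentially the same route as the paper, which proves this proposition by exactly the combination you describe: the chain structure from the twisted-direct-sum construction, the fully understood $W_1$-quotient (hence $a-1$) when $j_m\ne 1$, and Lemma~\ref{lem:general-action-of-b-Sunic} when $j_m=1$, run through the uniseriality argument of Proposition~\ref{pro:submodules-of-Wm}. Your reorganisation as an induction along the lexicographic order on $J_m$, with the lemma used as a black box, is just a cleaner write-up of the same argument, and your handling of the predecessor bookkeeping and degenerate cases is accurate.
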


Finally, although the precise values for $R_m$ are
  already known for the \v{S}uni\'{c} groups from \cite[Lem.~8(b) and
Cor.~1]{Su07}, it is worth noting the following
straightforward analogue of Proposition~\ref{prop:Rm-EGS}
for the \v{S}uni\'{c} groups.

\begin{proposition}\label{prop:Rm-Sunic}
  Let $G = G_{p,f}$ be a regular branch \v{S}uni\'{c}
    group and let $r = \deg(f)$.  Then $R_m = \{ 1,2, \dots, p \}^m$
  for $1 \le m \le r$ and
  \[
    R_m\supseteq\{1,2, \dots, p-1 \} \times \{ 1,2, \dots, p \}^{m-1}
    \qquad \text{for $m >r$}.
  \]
\end{proposition}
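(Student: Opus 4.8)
The plan is to mimic the proof of Proposition~\ref{prop:Rm-EGS}.  By the discussion in Section~\ref{sec:chain} together with the preceding proposition, $R_m$ is an initial segment of $J_m$ in the predecessor (lexicographic) order, with $\lvert R_m\rvert=\log_p\lvert\St_G(m):\St_G(m+1)\rvert$.  Hence $R_m=J_m$ follows once we know that $\log_p\lvert\St_G(m):\St_G(m+1)\rvert=p^m=\dim_{\mathbb{F}_p}W_m$, i.e.\ that $\St_G(m)$ surjects onto $W_m$ under the isomorphism~\eqref{equ:iso-Wm}; and $R_m\supseteq\{1,\dots,p-1\}\times J_{m-1}$ follows once $\log_p\lvert\St_G(m):\St_G(m+1)\rvert\ge(p-1)p^{m-1}$.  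So it remains to control these two indices.

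For $1\le m\le r$ I would single out the directed generator $b_{r-m+1}$.  Unwinding $\psi(b_i)=(1,\dots,1,b_{i+1})$ for $i\le r-1$ shows that $b_{r-m+1}\in\St_G(m)\smallsetminus\St_G(m+1)$ and that all its level-$m$ labels are trivial except for a single $p$-cycle at the vertex $p\overset{m-1}{\dots}p1$ --- the one produced by the leading entry $a$ of $\psi(b_r)$, while the trailing section $c=b_1^{-\alpha_0}\cdots b_r^{-\alpha_{r-1}}\in\St_G(1)$ contributes nothing modulo $\St_S(m+1)$.  So the image of $b_{r-m+1}$ in $W_m$ is a standard basis vector, in particular of coordinate-sum~$1$.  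Now $\St_G(m)/\St_G(m+1)$ is an $\mathbb{F}_pG$-submodule of $W_m$ (as $\St_G(m)\trianglelefteq G$); the submodules of $W_m$ form a chain whose unique maximal member is $V_{(p,\dots,p,p-1)}$ (the preceding proposition); and unwinding Definition~\ref{def:W1-and-submodules} identifies $V_{(p,\dots,p,p-1)}$ with the hyperplane of vectors of total coordinate-sum zero, since $V_{(p-1)}\le W_1$ is precisely the zero-sum hyperplane.  As $\overline{b_{r-m+1}}$ escapes this hyperplane, the submodule it generates is all of $W_m$, so $\St_G(m)$ surjects onto $W_m$ and $R_m=J_m$.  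This works uniformly in~$p$, including $p=2$, where $r\ge2$ because $G$ is assumed regular branch.

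For $m>r$ I would invoke the regular branch property.  Writing $K=G'$ when $p$ is odd and $K=\langle[a,b_2],\dots,[a,b_r]\rangle^G$ when $p=2$ (Proposition~\ref{prop:Sunic}), the defining inclusion $K\times\overset{p}{\dots}\times K\subseteq\psi(\St_K(1))$ together with $K\le\St_G(1)$ iterates to $K\times\overset{p^{m-1}}{\dots}\times K\subseteq\psi_{m-1}(\St_G(m))$.  Hence the image of $\St_G(m)$ in $W_m$ contains, in each of the $p^{m-1}$ level-$(m-1)$ blocks, a copy of the image $\bar K$ of $K$ in $W_1$.  A short computation of $\psi([a,b_r])$ gives $\overline{[a,b_r]}=(1,-1,0,\dots,0)$ for $p$ odd and $\overline{[a,b_r]}=(1,1)$ for $p=2$, so in either case $\bar K$ contains the cyclic submodule $V_{(p-1)}\le W_1$, of dimension $p-1$.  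Therefore $\log_p\lvert\St_G(m):\St_G(m+1)\rvert\ge(p-1)p^{m-1}$, which gives $R_m\supseteq\{1,\dots,p-1\}\times J_{m-1}$.

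I do not expect a real obstacle here: the only delicate points are bookkeeping --- reading off correctly, from the recursive definitions, the exact level at which $b_{r-m+1}$ first becomes non-trivial and its precise image in $W_m$, and matching the maximal submodule $V_{(p,\dots,p,p-1)}$ with the zero-coordinate-sum hyperplane.  The argument is a streamlined analogue of Proposition~\ref{prop:Rm-EGS} and recovers, with little effort, the values of $R_m$ already implicit in~\cite[Lem.~8(b) and Cor.~1]{Su07}.
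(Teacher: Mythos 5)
Your argument is correct and is essentially the paper's own proof: for $1\le m\le r$ the paper likewise uses $b_{r-i}\in\St_G(i+1)$ (your $b_{r-m+1}$, whose nonzero coordinate sum pushes it outside the unique maximal submodule $V_{(p,\dots,p,p-1)}$), and for $m>r$ it uses the element $\psi_{m-1}^{\,-1}([a,b_r],1,\dots,1)\in\St_G(m)$ supplied by the regular branch property, exactly as you do. The only differences are cosmetic --- you treat $r=1$ uniformly where the paper defers to Proposition~\ref{prop:Rm-EGS}(iii), and you place a copy of $\overline{[a,b_r]}$ in every level-$(m-1)$ block rather than one block and then invoking the module structure --- and your write-up supplies the bookkeeping the paper leaves implicit.
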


\begin{proof}
  For $r=1$ the claim follows as for
  Proposition~\ref{prop:Rm-EGS}(iii), so we suppose that
  $r\ge 2$. We have $b_{r-i}\in \St_G(i+1)$ for
  $i\in\{1,\dots,r-1\}$. This yields the first part of the statement.
  For $m>r$, the result follows similarly from considering the element
  \[
    \psi_{m-1}^{\, -1}([a,b_r],1,\dots,1)\in\St_G(m).\qedhere
  \]
\end{proof}

We remark that data from \cite{Su07} shows that the final containment
in the above proposition is in most cases strict.

\smallskip

With only minor modifications, corresponding statements to
Theorems~\ref{thm:sharper-CSP}, \ref{thm:strong-sandwich-gives-chain}
and \ref{thm:Sunic} can be proved for the Grigorchuk groups acting on
the binary rooted tree.





\end{document}